\begin{document}
\newcommand\curly[1]{\left\{#1\right\}}
\newcommand\brac[1]{\left[#1\right]}
\newcommand\paren[1]{\left(#1\right)}
\newcommand\abs[1]{\left|#1\right|}
\newcommand\norm[1]{\left\|#1\right\|}
\newcommand\ang[1]{\left\langle#1\right\rangle}
\newcommand{\R}{\mathbb{R}}
\newcommand{\N}{\mathbb{N}}
\newcommand{\E}{\mathbb{E}}
\renewcommand{\P}{\mathbb{P}}
\newcommand{\cX}{\mathcal{X}}
\newcommand{\cY}{\mathcal{Y}}
\newcommand{\cZ}{\mathcal{Z}}
\newcommand{\calG}{\mathcal{G}}
\newcommand{\calD}{\mathcal{D}}
\newcommand{\calV}{\mathcal{V}}
\newcommand{\Q}{\mathbb{Q}}
\newcommand{\calH}{\mathscr{H}}
\newcommand{\calP}{\mathcal{P}}
\newcommand{\scF}{\mathscr{F}}
\newcommand{\scG}{\mathscr{G}}
\newcommand{\calB}{\mathcal{B}}
\newcommand{\calA}{\mathcal{A}}
\renewcommand{\hat}{\widehat}
\renewcommand{\tilde}{\widetilde}
\renewcommand{\epsilon}{\varepsilon}
\newcommand{\var}{\textrm{Var}}
\makeatletter
\newcommand*{\indep}{%
  \mathbin{%
    \mathpalette{\@indep}{}%
  }%
}
\newcommand*{\nindep}{%
  \mathbin{
    \mathpalette{\@indep}{\not}
  }%
}
\newcommand*{\@indep}[2]{%
  \sbox0{$#1\perp\m@th$}
  \sbox2{$#1=$}
  \sbox4{$#1\vcenter{}$}
  \rlap{\copy0}
  \dimen@=\dimexpr\ht2-\ht4-.2pt\relax
  \kern\dimen@
  {#2}%
  \kern\dimen@
  \copy0 
}
\makeatother

\newtheorem{thm}{Theorem}[section]
\newtheorem{lem}[thm]{Lemma}
\newtheorem{prop}[thm]{Proposition}
\newtheorem{cor}{Corollary}
\newtheorem{defn}{Definition}[section]
\newtheorem{conj}{Conjecture}[section]
\newtheorem{exmp}{Example}[section]
\newtheorem{rem}{Remark}

\title{Conditional Mutual Information Estimation for Mixed Discrete and
  Continuous Variables with Nearest Neighbors}

%
%

\author{Octavio C\'esar Mesner,
  Cosma Rohilla Shalizi
\IEEEcompsocitemizethanks{\IEEEcompsocthanksitem O. C. Mesner is a Ph.D.
  candidate in the Department of Statistics and Data Science and the
  Department of Engineering and Public Policy at Carnegie Mellon
  University, Pittsburgh, PA, 15213.
  This paper was presented in part at 2019 JSM and 2019 Conference of
  Ford Fellows.\protect\\
E-mail: omesner@cmu.edu
\IEEEcompsocthanksitem C. R. Shalizi is an associate professor in the
Department of Statistics and Data Science at Carnegie Mellon University, and an External Professor at the Santa Fe Institute.}
}

\IEEEtitleabstractindextext{%
  \begin{abstract}
    Fields like public health, public policy, and social science often want to
    quantify the degree of dependence between variables whose relationships
    take on unknown functional forms.  Typically, in fact, researchers in these
    fields are attempting to evaluate causal theories, and so want to quantify
    dependence after conditioning on other variables that might explain,
    mediate or confound causal relations.  One reason conditional mutual
    information is not more widely used for these tasks is the lack of
    estimators which can handle combinations of continuous and discrete random
    variables, common in applications.  This paper develops a new method for
    estimating mutual and conditional mutual information for data samples
    containing a mix of discrete and continuous variables.  We prove that this
    estimator is consistent and show, via simulation, that it is more accurate
    than similar estimators.
\end{abstract}

\begin{IEEEkeywords}
  Conditional Mutual Information, Discrete and Continuous Data, Nearest Neighbors
\end{IEEEkeywords}}

\maketitle

\IEEEdisplaynontitleabstractindextext

%
\IEEEpeerreviewmaketitle

\ifCLASSOPTIONcompsoc
\IEEEraisesectionheading{\section{Introduction}\label{sec:introduction}}
\else
\section{Introduction}
\label{sec:introduction}
\fi

\IEEEPARstart{E}{stimating} the dependence between random variables or vectors
from data when underlying the distribution is unknown is central to statistics
and machine learning.  In most scientific applications, it is necessary to
determine if dependence is mediated through other variables.  Mutual
information (MI) and conditional mutual information (CMI) are attractive for
this purpose because they characterize marginal and conditional
independence (they are equal to zero if and only if the variables or vectors in
question are marginally or conditionally independent), and they adhere to the
data processing inequality (transformations never increase information content)
\cite{Dembo-Cover-Thomas-inequalities}.

While there has been limited use of information theoretic statistics in
specific research areas such as gene regulatory networks \cite{liang2008gene,
  hartemink2005reverse, zhang2011inferring}, this has tended to be the
exception rather than the norm.  Typically, it is more common to use
generalized linear regression despite its inability to capture nonlinear
relationships \cite{numata2008measuring}.  This may be, in part, because until
recently, empirically estimating mutual information was only possible for
exclusively discrete or exclusively continuous random variables, a severe
limitation for these fields.

In this paper, we briefly review methods leading up to the estimation of MI and
CMI using distribution-free, nearest-neighbors approaches.  We extend the
existing work to develop an estimator for MI and CMI that can handle mixed data
types with improved performance over current methods.  We prove that our
estimator is theoretically consistent and show its performance empirically.

\section{Background}

The MI between two random variables (or vectors) is a measure of dependence
quantifying the amount of ``information'' shared between the random variables.
The CMI between two random variables given a third is a
measure of dependence quantifying the amount of information shared between
random variables given the knowledge of a third random variable or vector.
These concepts were first developed by Shannon \cite{Shannon-1948}; the
standard modern treatment is \cite{Cover-and-Thomas-2nd}.  These concepts are
inherently linked to entropy and sometimes defined in terms of entropy.

\subsection{Measure Theoretic Information}

Traditionally, the information theoretic measures, entropy and differential
entropy have been used separately for discrete and continuous random variables,
respectively; however, they largely share the same
properties \cite{Cover-and-Thomas-2nd}.  Both of these quantities are equivalent to a
Kullack-Leibler divergence, an expected value of a $\log$-transformed
Radon-Nikodym (RN) derivative, $\E\brac{\log\frac{dP}{d\mu}}$.  The primary
distinction between entropy and differential entropy is the choice of reference
measure, $\mu$, using the Lebesgue measure for continuous variables and the
counting measure for discrete measures.
Lemma \ref{domMeasure} gives the conditions for the existence of a
dominating product measure, $\mu$, in the mixed case.

\cite[\S 5.5]{gray2011entropy} defines entropy and information for generalized
probability spaces as the supremum of all finite, discrete representations
(quantizers) of random variables, mirroring the definition of the Lebesgue
integral.  Because our problem is concerned specifically with mixed
discrete-continuous space, we use this explicit definition which is helpful
when calculating theoretical values and assume all measurable spaces
are standard according to \cite[\S 1.4]{gray2011entropy}.

In order to define MI and CMI, it is necessary that the RN derivative of a
joint probability measure with respect to the product of its marginal
probability measures exists.  The next theorem assures this for
nonsingular joint probability measures.
Further, we assume that all conditional probability measures are regular.

\begin{thm}\label{rnwd}
  Let $P_{XYZ}$ be a joint probability measure on the space
  $\cX\times\cY\times\cZ$, where $\cX,\cY,\cZ$ are all metric spaces.
  If for every value of $Z$, $P_{XY|Z}$ is nonsingular (see
  def. \ref{nonsing}), then
  $\frac{dP_{XY|Z}}{d\paren{P_{X|Z}\times P_{Y|Z}}}$ is well-defined.
\end{thm}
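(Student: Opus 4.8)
The plan is to reduce the claim to a pointwise application of the Radon--Nikodym theorem. ``Well-defined'' here means that the derivative exists, so it suffices to check, for each value $z$, that $P_{XY|Z=z}$ is absolutely continuous with respect to $P_{X|Z=z}\times P_{Y|Z=z}$ --- the latter is a probability measure, hence $\sigma$-finite, so that hypothesis of Radon--Nikodym is automatic --- and then to verify that the resulting family of derivatives can be chosen to depend measurably on $z$, so that $\frac{dP_{XY|Z}}{d\paren{P_{X|Z}\times P_{Y|Z}}}$ is a genuine function on $\cX\times\cY\times\cZ$. By regularity of the conditional measures, the marginals of $Q:=P_{XY|Z=z}$ are exactly $Q_X=P_{X|Z=z}$ and $Q_Y=P_{Y|Z=z}$, so the pointwise task is the unconditional statement: if $Q$ is nonsingular then $Q\ll Q_X\times Q_Y$.

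For the pointwise step, fix $z$. By Definition~\ref{nonsing} together with Lemma~\ref{domMeasure}, nonsingularity of $Q$ furnishes $\sigma$-finite measures $\mu$ on $\cX$ and $\nu$ on $\cY$ with $Q\ll\mu\times\nu$; set $h:=\frac{dQ}{d(\mu\times\nu)}\ge 0$. Since $\paren{\mu\times\nu}(A\times\cY)=0$ whenever $\mu(A)=0$ we get $Q_X\ll\mu$, and symmetrically $Q_Y\ll\nu$; by Tonelli's theorem one may take the densities $f:=\frac{dQ_X}{d\mu}$, $g:=\frac{dQ_Y}{d\nu}$ to be the partial integrals $f(x)=\int h(x,y)\,\nu(dy)$ and $g(y)=\int h(x,y)\,\mu(dx)$. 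The crux is that $h=0$ holds $(\mu\times\nu)$-a.e.\ off $\{f>0\}\times\{g>0\}$: since $h\ge0$, on $\{f=0\}\times\cY$ Tonelli gives $\int h\,d(\mu\times\nu)=\int_{\{f=0\}}f\,d\mu=0$, so $h=0$ a.e.\ there, and symmetrically on $\cX\times\{g=0\}$. Hence if $\paren{Q_X\times Q_Y}(A)=\int_A fg\,d(\mu\times\nu)=0$ then $fg=0$ a.e.\ on $A$, so $h=0$ a.e.\ on $A$, so $Q(A)=\int_A h\,d(\mu\times\nu)=0$; this is precisely $Q\ll Q_X\times Q_Y$, and Radon--Nikodym yields the derivative, with the explicit representative $h/(fg)$ on $\{fg>0\}$ and, say, $0$ elsewhere --- the mixed-type analogue of the usual density ratio.

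Finally I would promote this to the conditional statement. Since $\cX,\cY,\cZ$ are metric and, by the standing assumption, standard, the disintegrations $z\mapsto P_{XY|Z=z}$, $z\mapsto P_{X|Z=z}$, $z\mapsto P_{Y|Z=z}$ exist and are measurable in $z$, and the same machinery (a measurable version of the Radon--Nikodym theorem on standard Borel spaces, or ratios of the measures over a fixed refining sequence of finite partitions, which converge by the martingale theorem) lets one choose $h$, $f$, $g$ jointly measurable in $(x,y,z)$; then $h(x,y,z)/\big(f(x,z)g(y,z)\big)$, extended by $0$ off $\{fg>0\}$, is jointly measurable and, for every $z$, a version of $\frac{dP_{XY|Z=z}}{d(P_{X|Z=z}\times P_{Y|Z=z})}$ by the previous paragraph. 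I expect this last step to be the main obstacle: the pointwise absolute continuity is essentially a one-line Tonelli argument, but securing joint-in-$(x,y,z)$ measurability of the densities --- so that the conditional derivative is an honest measurable function rather than merely a $z$-indexed collection of a.e.-defined objects --- is where the standardness of the spaces and the measurable selection/disintegration theorems must be invoked with care.
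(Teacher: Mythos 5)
Your proposal is correct in substance, but it takes a genuinely different route from the paper. The paper proves $P_{XY|Z}\ll P_{X|Z}\times P_{Y|Z}$ directly: given a product-null set $A$, it splits $A$ into the points whose $x$-section has positive $P_{Y|Z}$-measure, those whose $y$-section has positive $P_{X|Z}$-measure, and the set $A_3$ where both sections are null; Fubini plus the lemma ``$\int f\,d\mu=0$, $f\ge 0\Rightarrow\mu\{f>0\}=0$'' kills the first two pieces, and the nonsingularity hypothesis of Definition~\ref{nonsing} is invoked exactly once, to give $P_{XY|Z}(A_3)=0$. You instead pass through a dominating product measure $\mu\times\nu$ (via Lemma~\ref{domMeasure}) and run the density argument $h=0$ a.e.\ off $\{f>0\}\times\{g>0\}$, which is precisely the known fact the paper cites immediately after the theorem (Gray, Lemmas 7.16--7.17: absolute continuity with respect to \emph{some} product measure implies absolute continuity with respect to the product of the marginals); your Tonelli computation of that step is correct. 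What the two routes buy: the paper's argument uses only the nonsingularity definition and works for arbitrary metric spaces with regular conditionals, and it localizes exactly where nonsingularity matters; your route inherits the extra hypotheses of Lemma~\ref{domMeasure} (coordinates that are real-valued or categorical with countable atom sets), so as written it establishes the theorem only in the mixed discrete--continuous setting, and it is somewhat roundabout, since the lemma's own proof repeats the same Fubini/section induction and the existence of a dominating product measure is, in the end, equivalent to the conclusion (one may take $\mu=Q_X$, $\nu=Q_Y$). Your final paragraph on joint measurability in $(x,y,z)$ is extra rigor the paper does not attempt: it fixes $z$, assumes regular conditional probabilities, and interprets ``well-defined'' as existence and a.e.-uniqueness of the derivative for each $z$, so flagging the measurable-version issue is a fair observation about the statement but not a gap relative to what the paper actually proves.
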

\begin{proof}
  For the RN derivative to exist, the Radon-Nikodym theorem requires that
  $P_{XY|Z}$ is absolutely continuous with respect to $P_{X|Z}\times P_{Y|Z}$,
  $P_{XY|Z} \ll P_{X|Z}\times P_{Y|Z}$.  Because we assume that all conditional
  probabilities are regular, we omit the argument associated with $Z$ and
  proceed as probabilities measures of $X$ and $Y$ as appropriate.

  Assume $A\subseteq \cX\times\cY$ such that $(P_{X|Z}\times P_{Y|Z})(A) = 0$.
  Define $A_1 = \curly{x: P_{Y|Z}(A_x) > 0}\times\cY$,
  $A_2 = \cX\times \curly{y: P_{X|Z}(A_y) > 0}$, and
  $A_3 = \curly{(x,y): P_{X|Z}(A_y) = P_{Y|Z}(A_x) = 0}$.
  Notice that $A\subseteq A_1\cup A_2\cup A_3$.

  From Fubini's theorem, we have that
  \begin{align*}
    0 &= (P_{X|Z}\times P_{Y|Z})(A)\\
    &= \int_\cX P_{Y|Z}(A_x)dP_{X|Z}(x).
  \end{align*}
  Using \cite[Lemma 1.3.8]{dembo2019probability}, $f\geq0, \int
  fd\mu=0\Rightarrow \mu\curly{x:f(x)>0}=0$, for the first equality, we must
  have
  \begin{align*}
    0 &= P_{X|Z}\paren{\curly{x: P_{Y|Z}(A_x) > 0}}\\
    &= P_{XY|Z}\paren{\curly{x: P_{Y|Z}(A_x) > 0}\times\cY}\\
    &= P_{XY|Z}\paren{A_1}.
  \end{align*}
  Using the same construction but switching $X$ and $Y$, we also have that $0 =
  P_{XY|Z}\paren{A_2}$.
  $P_{XY|Z}\paren{A_3} = 0$ follows from the definition of non-singular.

  This shows that $P_{XY|Z} \ll P_{X|Z}\times P_{Y|Z}$.  Now, we may apply the
  RN theorem, so there exists a measurable function, $f$ such that for any
  measurable set $A\subseteq \cX\times\cY$,
  \begin{equation}\int_A fd(P_{X|Z}\times P_{Y|Z}) = P_{XY|Z}(A)\end{equation}
  and $f$ is unique almost everywhere $P_{X|Z}\times P_{Y|Z}$.
\end{proof}

\cite[Lemmas 7.16 and 7.17]{gray2011entropy} shows that if a joint measure is
absolutely continuous with respect to any product measure, then it is
absolutely continuous with respect to its product measure.
Theorem~\ref{rnwd} maybe more helpful for data analysis by showing the
sufficient condition for a nonsingular distribution in the mixed
setting in def. \ref{nonsing}.
Loosely, the RN derivative exists if no continuous variable is a
deterministic function of other variables.

\begin{defn}\label{cmidef}
The \textbf{conditional mutual information} of $X$ and $Y$ given $Z$
is
\begin{equation}\label{cmi}
I(X;Y|Z) \equiv \int{ \log{ \paren{\frac{dP_{XY|Z}}{d\paren{P_{X|Z}\times P_{Y|Z}}}}dP_{XYZ}}}
\end{equation}
where $P_{XY|Z}, P_{X|Z},$ and $P_{Y|Z}$ are regular conditional probability
measures and $\frac{dP_{XY|Z}}{d\paren{P_{X|Z}\times P_{Y|Z}}}$ is the
Radon-Nikodym derivative of the joint conditional measure, $P_{XY|Z}$, with
respect to the product of the marginal conditional measures, $P_{X|Z}\times
P_{Y|Z}$.  If $Z$ is constant, then Eq. \ref{cmi} is $I(X;Y)$, the
\textbf{mutual information} of $X$ and $Y$.
\end{defn}

Definition \ref{cmidef} retains the standard properties of CMI.

\begin{cor}
  \begin{enumerate}
  \item $X$ and $Y$ are conditionally independent given $Z$, $X\indep
    Y|Z$, if and only if $I(X:Y|Z) = 0$.
  \item $I(X;Y|Z) = H(X,Y,Z) - H(X,Z) - H(Y,Z) + H(Z)$
  \item If $X\rightarrow Z\rightarrow Y$ is a Markov chain, the data
    processing inequality states that $I(X;Y)\leq I(X:Z)$.
  \end{enumerate}
\end{cor}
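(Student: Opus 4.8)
The plan is to derive all three items from one common device: a density representation of the Radon--Nikodym derivative in \eqref{cmi}. First I would invoke Lemma~\ref{domMeasure} to fix a dominating product measure $\mu=\mu_{\cX}\times\mu_{\cY}\times\mu_{\cZ}$ and write $p_{xyz}$ for the density of $P_{XYZ}$ with respect to $\mu$, together with $p_{xz},p_{yz},p_z$ for the densities of the marginals $P_{XZ},P_{YZ},P_Z$ obtained by integrating $p_{xyz}$ over the missing coordinates (with respect to the corresponding factor products). Regularity of the conditional laws lets me write the conditional densities as the ratios $p_{xyz}/p_z$, $p_{xz}/p_z$, $p_{yz}/p_z$ on $\curly{p_z>0}$, so that the derivative in \eqref{cmi} equals $p_{xyz}\,p_z/(p_{xz}\,p_{yz})$ for $P_{XYZ}$-almost every $(x,y,z)$ (its existence being supplied by Theorem~\ref{rnwd}, and the exceptional null set being harmless). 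Everything after this is bookkeeping on top of this one identity.

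For item~1, the ``if'' direction is immediate: conditional independence says $P_{XY\mid Z}=P_{X\mid Z}\times P_{Y\mid Z}$ for $P_Z$-a.e.\ $z$, hence the derivative equals $1$ $P_{XYZ}$-a.e.\ and its logarithm integrates to $0$. For ``only if'' I would rewrite $I(X;Y\mid Z)=\int D\paren{P_{XY\mid Z=z}\,\|\,P_{X\mid Z=z}\times P_{Y\mid Z=z}}\,dP_Z(z)$, note that each inner divergence is a relative entropy, hence nonnegative by Jensen's inequality for the strictly convex function $-\log$ and zero exactly when the two laws coincide, and conclude that a vanishing integral of a nonnegative integrand forces that coincidence for $P_Z$-a.e.\ $z$, i.e.\ $X\indep Y\mid Z$.

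For item~2, I would take logarithms in the density identity, getting $\log p_{xyz}+\log p_z-\log p_{xz}-\log p_{yz}$, integrate against $P_{XYZ}$, and use linearity together with the marginal relations among the densities to collapse each summand to an integral over only the coordinates it involves, each of which is the generalized entropy of the corresponding variable block with respect to the appropriate factor of $\mu$; collecting the four signed terms gives the stated identity, and specializing $Z$ to a constant recovers the analogous decomposition of $I(X;Y)$. Item~3 then follows by running the same density cancellation with the pair $(Y,Z)$ in place of $Y$ to obtain the chain rule $I(X;Y,Z)=I(X;Z)+I(X;Y\mid Z)$, and again with the roles of $Y$ and $Z$ exchanged to obtain $I(X;Y,Z)=I(X;Y)+I(X;Z\mid Y)$: a Markov chain $X\to Z\to Y$ means $X\indep Y\mid Z$, so item~1 makes the first equation read $I(X;Y,Z)=I(X;Z)$, and the second then yields $I(X;Z)=I(X;Y)+I(X;Z\mid Y)\ge I(X;Y)$ by nonnegativity of CMI.

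The main obstacle is not any one algebraic manipulation but the measure-theoretic hygiene beneath them: justifying the conditional-density factorization off a $P_Z$-null set, checking that the densities and their logarithms are integrable so that the splitting in item~2 never produces an $\infty-\infty$, and invoking Tonelli legitimately when a joint integral is reduced to a marginal one. The cleanest remedy, I expect, is to establish the chain rule (the engine of item~3) in full generality first --- possibly through the quantizer/supremum definition of entropy and information of \cite[\S 5.5]{gray2011entropy} rather than densities, which sidesteps the integrability bookkeeping --- and then read items~1 and~2 off as consequences, dispatching the infinite-value cases by the same monotone-limit argument that defines the quantities.
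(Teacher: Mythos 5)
Your argument is correct in outline, but note that the paper itself offers no proof of this corollary at all: it simply asserts that Definition~\ref{cmidef} ``retains the standard properties of CMI,'' leaning on the standard treatments it cites (Gray's generalized information measures in particular). So there is nothing in the paper to match step-for-step; what you have written is essentially the classical argument, and it is compatible with the paper's own machinery --- the density identity $f = p_{xyz}p_z/(p_{xz}p_{yz})$ you take as your engine is exactly what the paper proves later as Lemma~\ref{cmiDeriv} (via Lemma~\ref{domMeasure} and Lemma~\ref{rnDerivLim}), so your route could be spliced into the paper's framework with little friction. Your item~1 (Jensen/relative-entropy with the disintegration $I(X;Y|Z)=\int D(P_{XY|Z=z}\,\|\,P_{X|Z=z}\times P_{Y|Z=z})\,dP_Z(z)$) and item~3 (two chain-rule expansions of $I(X;(Y,Z))$ plus $I(X;Y|Z)=0$ under the Markov condition) are sound. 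The one place deserving emphasis is item~2: in the mixed setting the symbols $H(\cdot)$ must be read as generalized entropies relative to the dominating product measure $\mu$ of Lemma~\ref{domMeasure} (they need not be nonnegative, and can be infinite), and the four-term splitting is only an identity when the terms are finite, so without a finiteness hypothesis the statement should be understood in the quantizer/supremum sense of \cite[\S 5.5]{gray2011entropy} --- precisely the remedy you propose in your final paragraph, so this is a caveat you have already anticipated rather than a gap. Proving the chain rule first through quantizers and then reading off items~1 and~2, as you suggest, is indeed the cleanest way to avoid the $\infty-\infty$ bookkeeping and is the order of development used in the reference the paper relies on.
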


\subsection{Nearest-Neighbor Estimators for Continuous Random
  Variables} 
\label{continuousestimation}

Estimation of entropy, mutual information, and conditional information for
discrete random variables can, in principle, be based on straight-forward
``plug-in'' estimates, substituting the empirical distribution in to the
defining formulas, though such estimates can suffer from substantial
finite-sample bias, especially when the number of categories is large
\cite{Victor-information-bias}, and a range of alternative estimators are also
available.

Estimation for continuous random variables is more challenging.
A direct plug-in estimation would first require estimate of densities,
which is a challenging problem in itself.
Dmitriev and Tarasenko first proposed such an estimator for
functionals~\cite{dmitriev1973functionalEst} for scalar random
variables.
Darbellay and Vajda~\cite{darbellay1999estimation}, in contrast,
proposed an estimator mutual information based on frequencies in
rectangular partitions.
Nearest-neighbor methods of estimating
information-theoretic quantities for continuous random variables which evade
the step of directly estimating a density go back over
thirty years, to \cite{kozachenko1987sample}, which proposed an estimator
of the differential entropy.

\subsubsection{Kozachenko and Leonenko estimator of entropy}\label{kl}

Kozachenko and Leonenko (KL) first used nearest neighbors to estimate
differential entropy \cite{kozachenko1987sample}.  Briefly, let
$X\in\cX\subseteq\R^d$ be a random variable and $x_1,\hdots,x_n\sim P_X$ be a
random sample from $X$.  Estimating the entropy of $X$, as
\begin{equation}\label{sampEnt}
  \hat H(X) = -\frac{1}{n}\sum_{i=1}^n\hat{\log f_X(x_i)}
\end{equation}
where $f_X$ is the density of $X$, we focus on $\hat{\log f_X(x_i)}$ for each
$i$ locally.  Define $\rho_{k,i,p}$ as the $\ell_p$-distance from point $x_i$
to its $k$th nearest neighbor, $k\text{NN}_i$, and $B(x_i, \rho_{k,i,p})$ as
the $d$-dimensional, $\ell_p$ ball of radius $\rho_{k,i,p}$ centered at $x_i$.
Consider the probability mass of $B(x_i, \rho_{k,i,p})$, $P_{k,i,p} \equiv
P_X(B(x_i, \rho_{k,i,p}))$.  $P_{k,i,p}$ could be estimated using the
$d$-dimensional volume in $\ell_p$ of $B(x_i,
\rho_{k,i,p})$ \cite{wang2005volumes} as
\begin{equation}\label{volEst}
  P_{k,i,p} \approx f_X(x_i)c_{d,p}\rho_{k,i,p}^d
\end{equation}
  where
  $c_{d,p} = 2^d\Gamma\paren{1+\frac{1}{p}}^d
  \big/\Gamma\paren{1+\frac{d}{p}}$
if $f_X(x_i)$ were known.
Notice that, intuitively, $P_{k,i,p}\approx \frac{k}{n}$.
In fact, using lemma~\ref{distDeriv} and seeing that the integral is
the same as $\E\brac{\log V}$ for $V\sim\text{Beta}(k,n-k)$,
\begin{equation}\label{digammaExp}
  \E\brac{\log P_{k,i,p}} = \psi(k) - \psi(n)
\end{equation}
where $\psi(x) = d\log \Gamma(x)/dx$ is the digamma function, and does
not depend on choice of $p$.
Substituting the estimate for $P_{k,i,p}$ in approximation
(\ref{volEst}) into the expectation in (\ref{digammaExp}), we have the
estimator for $\hat{\log f_X(x_i)}$: 
\begin{equation}\label{knnlogprob}
  \hat{\log f_X(x_i)} = \psi(k) - \psi(n) -\log c_{d,p} -
  d\log\rho_{k,i,p},
\end{equation}
making the KL estimator
\begin{equation}\label{klEst}
  \hat H(X) = -\psi(k) + \psi(n) + \log c_{d,p}
  +\frac{d}{n}\sum_{i=1}^n \log \rho_{k,i,p}.
\end{equation}
\cite{gao2018demystifying} showed that its bias is $\tilde O(n^{-1/d})$ and
variance is $\tilde O(1/n)$ where $\tilde O$ is the limiting behavior up to
polylogarithmic factors in $n$.

\begin{figure}
  \centering
  \begin{tikzpicture}
    \begin{axis}[
      xlabel = $X$,
      ylabel = $Y$,
      xmin=-6, xmax=6,
      ymin=-6, ymax=6,
      xtick={-2,2},
      ytick={-2,2},
      xmajorgrids=true,
      ymajorgrids=true,
      grid style=dashed,
      yticklabels={,,},
      xticklabels={,,},
      ]
      \addplot [only marks] table[col sep=comma] {scatterPts.csv};
      \node[,circle,fill,inner sep=2pt] at (axis cs:0,0){};
      \node[below right] at (axis cs: 0,0) {$i$};
      \draw[<->, thick](axis cs:5.6, -1.9)--(axis cs:5.6,1.9)
      node[left, midway] {$\rho_{k,i}$};
      \draw[<->, thick] (axis cs:-1.9,-5.6)--(axis cs:1.9,-5.6)
      node[above, midway] {$\rho_{k,i}$};
    \end{axis}
  \end{tikzpicture}
  \caption{The scatter plot above shows point $i$ and its $k$NN where
    $k=2$ on the right vertical dashed line.
    Here  $n_{X,i}=9$ and $n_{Y,i}=6$.}\label{scatter}
\end{figure}
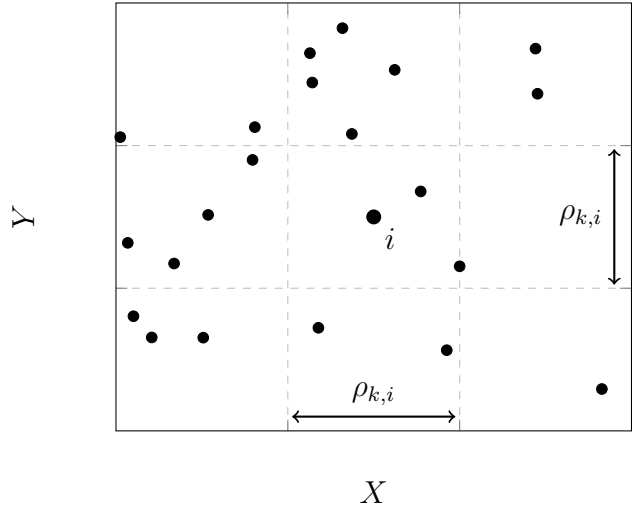

\subsubsection{Kraskov, St{\"o}gbaur, and Grassberger estimator of
  mutual information}\label{ksg} 

Kraskov, St{\"o}gbaur, and Grassberger (KSG) \cite{kraskov2004estimating}
developed an estimator for MI based on $I(X,Y) = H(X) + H(Y) - H(X,Y)$ and a
variation of the KL entropy estimator for continuous random variables or
vectors, $X$ and $Y$ in $\R^{d_X}$ and $\R^{d_Y}$, respectively.  Let
$(x_1,y_1),\hdots, (x_n,y_n)\sim P_{XY}$.  Setting $p = \infty$, define the
$\ell_\infty$-distance from point $(x_i,y_i)$ to its $k$NN as
$\frac{1}{2}\rho_{k,i,\infty}\equiv \frac{1}{2}\rho_{k,i}$,
so that $c_{d,p} = 1$ and $\log c_{d,p} = 0$.
Using this, the local KL estimate for the (negative) joint entropy at
point $i$ is
\begin{equation}
\hat{\log f_{XY}}_i = -\psi(k) + \psi(n) + (d_X + d_Y)\log \rho_{k,i};
\end{equation}
$\hat H(X,Y)$ is computed as in eq. (\ref{sampEnt}).
To calculate $\hat{\log f_{X}}_i$ and $\hat{\log f_{Y}}_i$, the KSG
method deviates slightly from KL by using different values for the $k$ 
hyper-parameter argument for each $i$.
In contrast, $\hat{\log f_{XY}}_i$ used the same value of $k$ for each
$i$ to calculate $\hat H(X,Y)$.
For KL, the $k$ argument can be chosen as any
integer value between 1 and $n-1$ which in turn determines the
$\ell_p$-distance to each point's $k$NN.
Considering each point $i$ separately, KSG works backward for $\hat
H(X)$ and $\hat H(Y)$, by first choosing a distance, $r$, then
counting the number of points that fall within the $\ell_\infty$ ball
of radius $r$ centered at point $i$ within the $X$ (or $Y$) subspace.
It uses this count of points to compute $\hat{\log f_{X}}_i$
(or $\hat{\log f_{Y}}_i$) in place of $k$ hyper-parameter argument and
$r$ in the distance argument.
Specifically, for each $i$, KSG chooses 
$r = \frac{1}{2}\rho_{k,i}$, the 
$\ell_\infty$-distance from point $(x_i,y_i)$ to its $k$NN in
$(\R^{d_X+d_Y},\ell_\infty)$, that was used to calculate
$\hat{\log f_{XY}}_i$.
Call the corresponding count of points $n_{X,i}^*$ in the $X$ subspace and
$n_{Y,i}^*$ in the $Y$ subspace:
\begin{equation}\label{nwisl}
  n_{W,i}^* = \abs{\curly{w_j:\norm{w_i-w_j}_\infty
      <\frac{1}{2}\rho_{k,i}, i\neq j}}
\end{equation}
and
\begin{equation}
  \hat{\log f_W}_i = -\psi(n_{W,i}^*) + \psi(n) + d_W\log \rho_{k,i}
\end{equation}
where $W$ is either $X$ or $Y$.

The KL estimator is accurate because the value of $P_{k,i,p}$, the 
probability in the local neighborhood around $(x_i,y_i)$ extending out to its
$k$NN, is completely determined by $k$ and $n$.  By using the $\ell_\infty$
norm in the KSG estimator, $\frac{1}{2}\rho_{k,i}$ is equal to the
absolute scalar difference between point $(x_i,y_i)$ and $k\text{NN}_i$ at a
coordinate in either $X$ or $Y$.  This way, the entropy estimates for either
$X$ or $Y$ will be accurate in the KL paradigm.  But, the point $k\text{NN}_i$
is not counted in $n_{X,i}^*$ or $n_{Y,i}^*$ because the
definition counts points whose distance from $(x_i,y_i)$ are strictly less than
$\frac{1}{2}\rho_{k,i}$, biasing (\ref{knnlogprob}) toward zero.  Using
$n_{W,i}^*+1$ for $W=X,Y$ corrects this for either $X$ or $Y$ but not
both; that is, either $n_{X,i}+1$ or $n_{Y,i}+1$ will be the
number of points within a distance of exactly $\frac{1}{2}\rho_{k,i}$.
See Fig.\ \ref{scatter}.

Plugging the estimates for $\hat H(X,Y), \hat H(X)$ and $\hat H(Y)$
discussed above into $I(X,Y) = H(X,Y) - H(X) - H(Y)$, we have
\begin{equation}
  \begin{split}
    &\hat I_{\text{KSG}}(X;Y)
    = \psi(k) + \psi(n)\\
    &\quad-\frac{1}{n}\sum_{i=1}^n
  \brac{\psi(n_{X,i}^*+1) +\psi(n_{Y,i}^*+1)}
  \end{split}
\end{equation}
where the $\frac{d}{n}\sum_{i=1}^n\rho_{k,i}$ terms all cancel
from using the same value of $\rho_{k,i}$ for each $i$
and $\log c_{d,p}$ is zero using the $\ell_\infty$ norm and choosing to
set the $k$NN distance to $\frac{1}{2}\rho_{k,i}$.

\cite{kraskov2004estimating} did not offer any proofs on convergence.
Attempting to correct the counting error mentioned, \cite{kraskov2004estimating}
provided another, less-used estimator as well.
Gao \cite{gao2018demystifying} later showed that
the KSG estimator is consistent with a bias of $\tilde
O\paren{n^{-\frac{1}{d_X+d_Y}}}$ and a variance of $\tilde O\paren{1/n}$.

\subsubsection{Frenzel and Pompe estimator of conditional mutual information}\label{fp}

Using a similar technique to estimate conditional mutual information, Frenzel
and Pompe (FP) first, though several other papers as well
\cite{frenzel2007partial, vejmelka2008inferring, tsimpiris2012nearest,
  runge2018conditional, rahimzamani2017cmi} used $I(X;Y|Z) = H(X,Y,Z) - H(X,Z)
- H(Y,Z) + H(Z)$ combined with the KSG technique to cancel out the
$\rho_{k,i}$ term from each of the entropy estimators to estimate CMI as
\begin{equation}\label{fplocal}
  \begin{split}
    \xi_i &= \psi(k)-\psi(n_{XZ,i}^*+1)\\
    &\quad-\psi(n_{YZ,i}^*+1) + \psi(n_{Z,i}^*+1)
  \end{split}
\end{equation}
where $n_{W,i}$ is calculated as in equation \ref{nwisl} with $W = XZ,
YZ, Z$.  The global CMI estimator, $\hat I_{\text{FP}}(X;Y|Z)$, is calculated
by averaging overall $\xi_i$.  While these papers show that this estimator does
well empirically, they do not provide theoretical justification.

\subsection{Estimation for Mixed Variables} \label{mixedestimation}

\subsubsection{Gao, Kannan, Oh, and Viswanth estimator of mutual
  information}\label{gkov}

Gao, Kannan, Oh, and Viswanth (GKOV) \cite{gao2017estimating} expanded on the
KSG technique to develop an MI estimator for mixes of discrete and continuous,
real-valued random variables.  In this setting, unlike the purely continuous
one, there is some probability that multiple independent observations will be
equal.  Depending on the value of $k$, there is a corresponding, nonzero
probability that the $k$NN distance is zero for some points.  While this
impairs the KL entropy estimator due to the $\log \rho$ term, the KSG estimator
only uses the $k$NN distance for counting points with that radius.  Similar to
the insight for the KSG technique, \cite{gao2017estimating} allows $k$ to
change for points whose $k$NN distance is zero:
\begin{equation}\label{ktildeGao}
\tilde k_i^* = \abs{\curly{(x_j,y_j): \norm{(x_i,y_i) - (x_j,y_j)}_\infty =
  0, i\neq j}}.
\end{equation}
To accommodate points whose $k$NN distance is zero, \cite{gao2017estimating}
changes the definition of $n_{W,i}^*$ to include boundary points:
\begin{equation}\label{nwi}
  n_{W,i} = \abs{\curly{w_j:\norm{w_i-w_j}_\infty
      \leq\frac{1}{2}\rho_{k,i}, i\neq j}}
\end{equation}
where $\frac{1}{2}\rho_{k,i}$ remains the $\ell_\infty$
distance from point $(x_i,y_i)$ to $k\text{NN}_i$.
For index $i$, \cite{gao2017estimating} locally estimates MI as
\begin{equation}
  \begin{split}
    \xi_i &= \psi(\tilde k_i^*) + \log(N)\\
    &\quad- \log(n_{X,i}+1) - \log(n_{Y,i}+1).
  \end{split}
\end{equation}
The global MI estimate is the average of the local estimates for each
point:
\begin{equation}
\hat I_{\text{GKOV}}(X;Y) = \frac{1}{n}\sum_{i=1}^n\xi_i ~.
\end{equation}
\cite{gao2017estimating} shows that this estimator is consistent under some
mild assumptions.

Rahimzamani, Asnani, Viswanath, and Kannan (RAVK)
\cite{rahimzamani2018mixedcmi} extend the idea of \cite{gao2017estimating} for
estimating MI for mixed data to a concept the authors define as \textit{graph
  divergence measure}, a generalized Kullback-Leibler (KL) divergence between a
joint probability measure and a factorization of the joint probability measure.
The authors say that this can be thought of as a metric of incompatibility
between the joint probability and the factorization.

Setting the factorization of $P_{XYZ}$ to $P_{X|Z}P_{Y|Z}P_Z$ gives an
equivalent definition of \ref{cmidef} of conditional mutual
information.  Using this factorization, the GKOV estimator for CMI at
index $i$ is 
\begin{equation}\label{ravkCMI}
  \begin{split}
    \xi_i &= \psi(\tilde k_i) - \log(n_{XZ,i}+1)\\
    &\quad - \log(n_{YZ,i}+1) + \log(n_{Z,i}+1).
  \end{split}
\end{equation}
The authors state that $\tilde k_i$ is the number of points within,
$\rho_{k,i}$, the distance to the $k$NN, of observation $i$.
Giving more detail, case III in the proof for
\cite[theorem 2]{rahimzamani2018mixedcmi}
states that $\rho_{k,i} > 0$ implies that $\tilde k_i = k$,
suggesting that $\tilde k_i$ is defined the same as (\ref{ktildeGao}).
Similarly, the proofs suggest that $n_{W,i}$ is defined as (\ref{nwi}).
The global CMI,
$I_{\text{GKOV}}(X;Y|Z)$ is calculated by averaging over all $\xi_i$.  This
paper shows that this estimator is consistent with similar assumptions
to those found in \cite{gao2017estimating}.

\section{Proposed Information Estimators}\label{prop}

The estimator for CMI (and MI) proposed in this paper builds on the ideas in
the previous papers but with critical changes that improve performance.
Start by considering local CMI estimates as in (\ref{fplocal}).
As discussed in \S \ref{ksg},
for index $i$, each negative local entropy estimate (\ref{knnlogprob}),
$\hat{\log f_{XYZ}}_i, \hat{\log f_{XZ}}_i, \hat{\log f_{YZ}}_i$, and
$\hat{\log f_{Z}}_i$, (before terms cancel) is accurate in the KL paradigm
when the distance from $(x_i,y_i,z_i)$ to its $k$NN, $n_{XZ,i}$NN,
$n_{YZ,i}$NN, and $n_{Z,i}$NN for each respective subspace ($XYZ, XZ,
YZ$ or $Z$) is exactly $\frac{1}{2}\rho_{k,i}$.
Moving from exclusively continuous data, where ties occur with
probability zero, to mixed data where ties occur with nonzero
probability, required that $n_{W,i}^*$ from (\ref{nwisl})
include boundary points as in $n_{W,i}$ from (\ref{nwi}).  With this
change, the entropy estimates are frequently accurate using $n_{W,i}$
rather than $n_{W,i}+1$ for $W=XZ, YZ, Z$ for continuous data.

With the $\ell_\infty$ norm, the $k$NN distance value,
$\frac{1}{2}\rho_{k,i}$, is equal to the scalar distance in at least
one coordinate of the random vector $(X,Y,Z)$.
If this coordinate is in $Z$, then the distance term in each local entropy
estimate from (\ref{knnlogprob}) will be exactly $d\log \rho_{k,i}$
for $\hat{\log f_{XYZ}}_i, \hat{\log f_{XZ}}_i, \hat{\log f_{YZ}}_i$, and
$\hat{\log f_{Z}}_i$ (because each contains $Z$).
Again, this is because within each given subspace, $\rho_{k,i} =
\rho_{n_{W,i},i}$, for $W = XZ, YZ, Z$, and thus
\begin{align*}
  \xi_i
  &= -\hat{\log f_{XYZ}}_i+\hat{\log f_{XZ}}_i
   +\hat{\log f_{YZ}}_i - \hat{\log f_{Z}}_i\\
  &= \psi(k)-\psi(n_{XZ,i}) -\psi(n_{YZ,i}) + \psi(n_{Z,i})
\end{align*}
with perfect cancellations.

If the $\ell_\infty$-distance coordinate is in $X$, then $\rho_{k,i} =
\rho_{n_{XZ,i},i}$, so that the corresponding terms in $\hat{\log
  f_{XYZ}}_i$ and $\hat{\log f_{XZ}}_i$ cancel but the other two
distance terms may not.
An analogous argument can be made for $Y$.
If the dimension of $Z$ is greater than $X$ and $Y$, heuristically, one
might expect the $k$NN distance to fall in the larger $Z$ dimension.

Theorem \ref{hiDim} will show the proposed estimator tends to zero as
the dimension of the $Z$ vector increases.
The methods discussed in \S~\ref{mixedestimation} will also converge to zero
as the dimension increases; however, the proposed method is an
improvement, especially for discrete points.
The combined dimension of $(X,Y,Z)$ can affect the value of $\tilde k_i$, on
discrete data, when the $k$NN distance is greater than zero.  Consider the case
where data is comprised of exclusively discrete random variables, that
is; each point in the sample has a positive probability point mass.
As the dimension of $(X,Y,Z)$ grows, probability point masses will
diminish as long as the added variables are not determined given the
previous variable.
Moreover, point masses in higher-dimensional spaces will necessarily be less
than or equal to their corresponding locations in lower-dimensional spaces.  It
is possible that the $k$NN distance for index $i$ is zero, especially if it has
a large probability point mass relative to $n$.  But, if its point mass in the
$XYZ$-space is not sufficiently large to expect more than one point at its
location for the given sample size, $n$, we would expect its $k$NN distance to be
greater than zero.
If the $\tilde k_i = k$, as it would in
eq. (\ref{ktildeGao}) because $\frac{1}{2}\rho_{k,i} > 0$, then
$n_{XZ,i}, n_{YZ,i}$, and $n_{Z,i}$ will be the
total number of points within the distance to the $k$NN including points on the
boundary for the appropriate subspace, $XZ, YZ$ or $Z$.
But, because the data are discrete, it is possible/likely that the
$k$NN is not unique.
This would indicate that there are more than $k$, points at and within
the same radius, $\frac{1}{2}\rho_{k,i}$ in the $XYZ$-space.
Under counting here would bias the local estimate of CMI
(\ref{ravkCMI}) downward because $k$ would be small relative to the
values $n_{XZ,i}, n_{YZ,i}$, and $n_{Z,i}$, in the associated
subspaces.
To fix this, we set $\tilde k_i$ to the number of points that are less
than or equal to the $k$NN distance from point $(x_i,y_i,z_i)$ as 
\begin{equation}\label{newtildek}
  \tilde k_i = \abs{\curly{w_j: \norm{w_i-w_j}\leq
      \frac{1}{2}\rho_{k,i}, i\neq j}}.
\end{equation}
Notice that if the data are all continuous, then $\tilde k_i = k$ with
probability one so this change will only affect discrete points.

Moving to the use of the digamma function, $\psi$, verses the natural
logarithm, $\log$, the methods presented
in \S \ref{continuousestimation} to estimate MI and CMI for continuous data use
the digamma function, $\psi$, and not $\log$.  In contrast, the methods in \S
\ref{mixedestimation} use both when estimating MI and CMI for mixed data.
Though no explicit reason is given for the deviation, it seems innocuous given
that $\abs{\log(w) - \psi(w)}\leq \frac{1}{w}$ for $w>0$, and, possibly
reasonable given that the plug-in estimator of CMI on discrete data is
$\log(\tilde k_i)-\log(n_{XZ,i}) - \log(n_{YZ,i}) +
\log(n_{Z,i})$ similar to (\ref{fplocal}), with the difference being
that it uses $\log$ in place of digamma.  But, the use of digamma was developed
specifically in the context of continuous data with no ties.  For this reason,
we use $\psi$ for continuous data and log for discrete data.
If a variable/coordinate  of $(X,Y,Z)$ is categorical (non-numeric),
our code uses the discrete distance metric for that coordinate in the
random vector, in place of absolute difference: 
the coordinate distance is zero at that coordinate for two observations when
equal and one otherwise.
Several cited lemmas and theorems used in the proofs in
\S~\ref{consist} assume vectors to be in $\R^d$.
Categorical variables do not strictly satisfy this requirement but
transforming categorical variables to dummy indicators (as one does
in regression) yields an isometry between the categorical space with
the discrete metric and $\R^m$ where the variable takes $m+1$ distinct
values with an $\ell_\infty$ metric. 
While it is not necessary to create dummy variables for the code to
work, we can be assured that the proofs are satisfied even when data
include categorical data.

To calculate the proposed local CMI estimate for index $i$, determine $\tilde
k_i$ using (\ref{newtildek}), saving the $k$NN distance,
$\frac{1}{2}\rho_{k,i}$.  Next for $W\in\curly{XZ,YZ,Z}$,
determine $n_{W,i}$ from eq.~(\ref{nwi}) using $\frac{1}{2}\rho_{k,i}$.
For each $i\in\curly{1,\dots,n}$, define
\begin{equation}\label{cmiprop}
  \xi_i = \begin{cases}
    \begin{split}
      &\psi(k)-\psi(n_{XZ,i})\\
      &\quad-\psi(n_{YZ,i}) + \psi(n_{Z,i})
    \end{split}
    \text{when } k=\tilde k_i\\
    \begin{split}
      &\log(\tilde k_i)-\log(n_{XZ,i})\\
      &\quad-\log(n_{YZ,i}) + \log(n_{Z,i})
    \end{split}
    \text{when } k<\tilde k_i\\
  \end{cases}.
\end{equation}
The sample estimate for the proposed CMI estimator is
\begin{equation}\label{propest}
  \hat I_{\text{prop}}(X;Y|Z) =
  \max\curly{\frac{1}{n}\sum_{i=1}^n\xi_i,0}.
\end{equation}
To calculate MI between $X$ and $Y$, we can make $Z$ constant
according to def.~\ref{cmidef} so that $n_{Z,i} = n$.
We define CMI and MI as the positive part of the mean because CMI and
MI are provably non-negative.
This setting can easily be changed in the code with a function argument.
In the simulations shown in \S~\ref{experiments}, we display the
mean itself for greater visibility.

\subsection{Consistency}\label{consist}

The estimator proposed in \S \ref{prop} is consistent for fixed-dimensional
random vectors under mild assumptions.  Theorem \ref{unbiased} shows that the
estimator is asymptotically unbiased and theorem \ref{variance} shows that its
asymptotic variance is zero.

As shorthand notation, we set $f \equiv \frac{dP_{XY|Z}}{d\paren{P_{X|Z}\times
    P_{Y|Z}}}$ and for a random variable $W$ on $\mathcal{W}$ with probability
measure, $P_W$, and $w\in\mathcal{W}$, define
\begin{equation}
  P_W(r) = P_W\paren{\curly{v\in\mathcal{W}:
      \norm{v-w}_\infty\leq r}}.
\end{equation}

\begin{thm}\label{unbiased}
  Let $(x_1,y_1,z_1), \hdots, (x_n,y_n,z_n)$ be an i.i.d. random sample
  from $P_{XYZ}$.
  Assume the following:
  \begin{enumerate}
  \item $k=k_n\rightarrow\infty$ and $\frac{k_n}{n}\rightarrow 0$ as
    $n\rightarrow\infty$. \label{knzero}
  \item For some $C>0$, $f(x,y,z)\leq C$ for all $(x,y,z)\in
    \cX\times\cY\times\cZ$.\label{fBound}
  \item $\curly{(x,y,z)\in\cX\times\cY\times\cZ:P_{XYZ}((x,y,z))>0}$
    is countable and nowhere dense in $\cX\times\cY\times\cZ$
  \end{enumerate}
  then
  \begin{equation}
    \lim_{n\rightarrow\infty}{\E\brac{\hat I_{\text{prop}}(X,Y|Z)}} =
    I(X,Y|Z) ~.
  \end{equation}
\end{thm}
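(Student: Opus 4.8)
The plan is to show that the expectation of the unclipped sample mean $\frac1n\sum_{i=1}^n\xi_i$ converges to $I(X;Y|Z)$ and then check that the truncation at $0$ in (\ref{propest}) is asymptotically harmless. By the i.i.d.\ assumption the summands are exchangeable, so $\E\brac{\frac1n\sum_{i=1}^n\xi_i}=\E\brac{\xi_1}$ and it suffices to study a single term. Because $\abs{\psi(w)-\log w}\le 1/w$ and, by construction, each of $\tilde k_1,n_{XZ,1},n_{YZ,1},n_{Z,1}$ is at least $k=k_n\to\infty$ (a subspace ball contains the projection of the full-space ball, whose point count is $\tilde k_1\ge k$), every $\psi$ in (\ref{cmiprop}) may be replaced by $\log$ up to an $o(1)$ error; so it is enough to treat
\[
\xi_1=\log\frac{\tilde k_1}{n}-\log\frac{n_{XZ,1}}{n}-\log\frac{n_{YZ,1}}{n}+\log\frac{n_{Z,1}}{n},
\]
a local plug-in estimate of $\log f(x_1,y_1,z_1)$, with $f$ the Radon--Nikodym derivative guaranteed by Theorem~\ref{rnwd}.

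Write $r_1=\tfrac12\rho_{k,1}$ for the $k$NN distance at $(x_1,y_1,z_1)$, and for $W\in\curly{XYZ,XZ,YZ,Z}$ let $P_W(r_1)$ denote the $P_W$-mass of the $\ell_\infty$-ball of radius $r_1$ about the corresponding coordinates. Split $\xi_1=A_1+B_1$ with
\[
A_1=\log\frac{P_{XYZ}(r_1)\,P_Z(r_1)}{P_{XZ}(r_1)\,P_{YZ}(r_1)},\qquad B_1=\log\frac{\tilde k_1}{nP_{XYZ}(r_1)}-\log\frac{n_{XZ,1}}{nP_{XZ}(r_1)}-\log\frac{n_{YZ,1}}{nP_{YZ}(r_1)}+\log\frac{n_{Z,1}}{nP_Z(r_1)}.
\]
For $B_1$: conditioning on $(x_1,y_1,z_1)$, on $r_1$, and on the count $\tilde k_1$ of sample points inside the full-space ball, each subspace count $n_{W,1}$ equals $\tilde k_1$ plus an independent binomial count of the remaining points, with conditional mean $\asymp nP_W(r_1)$; since $nP_W(r_1)\ge nP_{XYZ}(r_1)\asymp k_n\to\infty$, binomial concentration gives each ratio in $B_1\to 1$ and its logarithm $\to 0$ in $L^1$, so $\E\brac{B_1}\to 0$. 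For $A_1$: assumption \ref{knzero} forces $r_1\to 0$ almost surely at every point that is not an atom, while at each atom $r_1=0$ for all $n$ large enough (once the point mass exceeds $k_n/n$), in which case $A_1$ is exactly the logarithm of a ratio of point masses. In both regimes a differentiation argument --- continuity of measure at atoms, and differentiation of the product measures along shrinking cubes on the continuous part, where the third assumption keeps the (countable, nowhere dense) atoms from interfering --- gives $A_1\to\log f(X,Y,Z)$ almost surely, and $\E\brac{\log f(X,Y,Z)}=I(X;Y|Z)$ by Definition~\ref{cmidef}. Using assumption \ref{fBound} to bound the limit by $\log C<\infty$ and the nonnegativity $I(X;Y|Z)\ge 0$ to control its lower tail, one passes to the limit to obtain $\E\brac{A_1}\to I(X;Y|Z)$, hence $\E\brac{\tfrac1n\sum_{i=1}^n\xi_i}\to I(X;Y|Z)$. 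Finally $0\le\E\brac{\hat I_{\text{prop}}}-\E\brac{\tfrac1n\sum_i\xi_i}=\E\brac{\paren{-\tfrac1n\sum_i\xi_i}^{+}}\le\E\abs{\tfrac1n\sum_i\xi_i-I(X;Y|Z)}$, so $L^1$ convergence of the sample mean finishes the proof.

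The main obstacle is precisely this last $L^1$/uniform-integrability step: passing from the almost-sure convergence $\xi_1\to\log f(X,Y,Z)$ to convergence of $\E\brac{\xi_1}$. The upper tail is benign because $f\le C$, but the lower tail is delicate --- $\xi_1$ is very negative exactly where the local density ratio is small, and since the radius $r_1$ is data-dependent the term $A_1$ is not deterministically bounded at finite scales, only in the limit. I expect to handle this by showing that on the high-probability event where all four binomial counts concentrate one has $\xi_1\ge\log f(X,Y,Z)-o(1)$, whose negative part is integrable because $I(X;Y|Z)<\infty$, while on the complementary event $\xi_1$ is still bounded below by a deterministic quantity of order $\log(k_n/n)$ and the event has superpolynomially small probability; reconciling this with the weak hypothesis that $k_n$ may grow arbitrarily slowly is the subtle point and may require a truncation or a sharper binomial tail bound. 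A secondary technical point is making the differentiation step for $A_1$ rigorous in the genuinely mixed setting, i.e.\ verifying that at $P_{XYZ}$-almost every point the ratio of product-measure ball masses converges to $f$; this is where the third assumption (atoms countable and nowhere dense) is used.
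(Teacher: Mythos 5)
Your decomposition is essentially the paper's: reduce to $\E\brac{\xi_1}$ by exchangeability, swap $\psi$ for $\log$ at cost $O(1/k)$, split the local statistic into a ``counts versus ball-mass'' piece ($B_1$) and a ``ball-mass ratio versus $f$'' piece ($A_1$), treat atoms and non-atoms separately, use binomial laws for the subspace counts, and a differentiation-of-measures argument (the paper's Lemmas~\ref{rnDerivLim} and \ref{cmiDeriv}, plus Lemma~\ref{ktildelim} for $\tilde k\rightarrow k$ at continuous points) for $A_1$. So the skeleton matches. The genuine gap is the step you yourself flag and postpone: getting from pointwise/almost-sure statements to convergence of the expectation. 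That step is where the paper's proof actually lives, and it is closed by a different mechanism than the uniform-integrability route you sketch. The key observations are: (i) $\xi$ is \emph{deterministically} bounded in magnitude by roughly $4\log n$, since all four counts lie between $1$ and $n$, so there is no heavy lower tail of $\xi_1$ to control --- the quantity to be tamed is $\abs{\E\brac{\xi\mid x,y,z}-\log f(x,y,z)}$, integrated over $P_{XYZ}$; (ii) the ``bad'' events (an atom whose $k$NN distance is still positive, or a continuous point whose $k$NN distance exceeds a fixed $r_\epsilon$) have probability bounded by Chernoff terms of the form $\exp\curly{-\paren{\tfrac12 nP-k}}$, which beat the $\log n$ magnitude bound no matter how slowly $k_n$ grows, because the exponent scales with $nP$, not with $k$ --- this disposes of your worry about slowly growing $k_n$; (iii) the exceptional sets in space are handled not by integrability of $\log f$ but by truncation: a finite set $E$ of atoms carrying all but $\epsilon/(4\log n+\log C)$ of the discrete mass (Prop.~\ref{discFin}), and an Egoroff set on which the ball-mass ratio converges \emph{uniformly} to $f$, with the complement's measure again chosen small relative to $\log C$; and (iv) your appeal to ``binomial concentration gives each ratio $\rightarrow1$ in $L^1$'' is replaced by exact beta-integral computations (Lemmas~\ref{distDeriv}, \ref{absExpLog}, \ref{discBound}, \ref{contBound}), which are needed because $1/P_{XYZ}(\rho_k)$ has expectation $(n-1)/(k-1)$, so the error only becomes $O(1/k)$ after the cancellation exhibited there --- a generic concentration statement does not obviously deliver the $L^1$ bound.

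A secondary mismatch: you treat the clipping $\max\curly{\cdot,0}$ in (\ref{propest}) by invoking $L^1$ convergence of the sample mean, but nothing in your argument (or in the paper's bias proof alone) gives that; it would require importing the variance bound of Theorem~\ref{variance}. The paper's proof simply works with the unclipped average $\frac1n\sum_i\xi_i$ and never addresses the truncation, so on this point you are attempting more than the paper proves, but the step as written is not yet justified within this theorem.
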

The proof can be found in App.\ \ref{consistencyProof}.

\begin{thm}\label{variance}
  Let $W = \{W_1, \hdots, W_n\}$ be a random samples of size $n$ such that for
  each $i$, $W_i = (X_i, Y_i, Z_i)$, $k\geq 2$, and let $\hat I_n(W) =
  \frac{1}{n}\sum_{i=1}^n \xi_i(W)$ where $\xi_i(W) = \xi_i$ as defined above.
  Then
  \begin{equation}
    \lim_{n\rightarrow\infty}{\var\paren{\hat I_{\text{prop}}(W)}} = 0~.
  \end{equation}
\end{thm}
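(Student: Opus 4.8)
The plan is to control $\var\paren{\hat I_{\text{prop}}}$ through the Efron--Stein inequality. Since $\hat I_{\text{prop}}(W)=\max\curly{\hat I_n(W),0}$ by (\ref{propest}) and $t\mapsto\max\curly{t,0}$ is $1$-Lipschitz, $\var\paren{\hat I_{\text{prop}}(W)}\le\var\paren{\hat I_n(W)}$, so it suffices to bound $\var\paren{\hat I_n(W)}$. Writing $W^{(j)}$ for the sample obtained from $W$ by replacing $W_j$ with an independent copy $W_j'$, Efron--Stein gives $\var\paren{\hat I_n(W)}\le\tfrac12\sum_{j=1}^n\E\brac{\paren{\hat I_n(W)-\hat I_n(W^{(j)})}^2}$. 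Hence the whole task reduces to showing that a one-coordinate resampling moves $\hat I_n$ by only $O\paren{k\log n/n}$, \emph{uniformly} over samples; this immediately yields $\var\paren{\hat I_n(W)}\le\tfrac12\,n\cdot O\paren{k^2\log^2 n/n^2}=O\paren{k^2\log^2 n/n}\to0$ (for fixed $k\ge2$, or whenever $k=k_n$ with $k_n\log n=o(\sqrt n)$), and therefore $\var\paren{\hat I_{\text{prop}}(W)}\to0$.

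Two elementary facts drive the uniform bound. First, for every $i$ the $k$ full-space nearest neighbours of $W_i$ project into each of the balls defining $n_{XZ,i}, n_{YZ,i}, n_{Z,i}$ and $\tilde k_i$, while each of these counts is at most $n-1$; hence $k\le\tilde k_i\le n_{XZ,i},n_{YZ,i}\le n_{Z,i}\le n-1$, so $\abs{\xi_i}\le\psi(n-1)-\psi(k)\le\log n$ in either branch of (\ref{cmiprop}). Second, the standard multiplicity lemma for $\paren{\R^d,\ell_\infty}$ states that a fixed point is among the $m$ nearest neighbours of at most $\gamma_d m$ points of any finite set, where $\gamma_d$ depends only on the combined dimension $d$ of $(X,Y,Z)$. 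Applying this with $m$ of order $k$ shows that replacing $W_j$ by $W_j'$ can change $\rho_{k,i}$, or make $\tilde k_i$ cross the value $k$ that selects the branch of $\xi_i$, for at most $O(k)$ indices $i$; these indices together with $i=j$ contribute at most $O(k)\cdot2\log n/n$ to $\abs{\hat I_n(W)-\hat I_n(W^{(j)})}$, which is already $O\paren{k\log n/n}$.

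For every other index $i$ the radius $\tfrac12\rho_{k,i}$ and the branch of $\xi_i$ are unchanged, and each of $n_{XZ,i}, n_{YZ,i}, n_{Z,i}$ moves by at most one (loss of $W_j$, possible gain of $W_j'$), so $\abs{\xi_i(W)-\xi_i(W^{(j)})}\le\tfrac1{n_{XZ,i}-1}+\tfrac1{n_{YZ,i}-1}+\tfrac1{n_{Z,i}-1}$, and the $XZ$ summand is nonzero only when the $XZ$-projection of $W_j$ or of $W_j'$ falls in the $\ell_\infty$ ball of radius $\tfrac12\rho_{k,i}$ about the $XZ$-projection of $W_i$ (and similarly for $YZ$ and $Z$). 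The remaining contribution is thus bounded by a constant multiple of $\tfrac1n\sum_{i:\;q\in B_i}1/n_{W,i}$, summed over subspaces $W\in\curly{XZ,YZ,Z}$ and fixed points $q$ (a projection of $W_j$ or of $W_j'$), where $B_i$ is the ball of radius $\tfrac12\rho_{k,i}$ about the $W$-projection of $W_i$. I would show each such sum is $O(\log n)$: for such $i$ put $\rho_i=\max\curly{k,r_i(q)}$, where $r_i(q)$ counts the other sample points whose $W$-projection is at least as close to that of $W_i$ as $q$ is; then $q$ is among the $\rho_i$ nearest $W$-neighbours of $W_i$ and $n_{W,i}\ge\rho_i$, so grouping the indices by the dyadic band $2^t\le\rho_i<2^{t+1}$ (of which there are $O(\log n)$) leaves at most $\gamma_d2^{t+1}$ of them in band $t$ by the multiplicity lemma, each with $n_{W,i}\ge2^t$, so band $t$ contributes at most $2\gamma_d$. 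Collecting the three pieces gives $\abs{\hat I_n(W)-\hat I_n(W^{(j)})}=O\paren{k\log n/n}$ uniformly, and the Efron--Stein bound above then finishes the argument.

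The step I expect to be the main obstacle is this last estimate $\sum_{i:\,q\in B_i}1/n_{W,i}=O(\log n)$: the crude count of indices touched by a resampling can be of order $n$ --- a single point may lie in the projected neighbourhood ball of very many others --- so the argument must offset that large count against the facts that each affected $\xi_i$ moves by only $O(1/n_{W,i})$ and that a point reached by a wide ball forces a correspondingly large $n_{W,i}$, which is exactly what the dyadic-in-rank decomposition, via the $\ell_\infty$ multiplicity lemma, supplies. A secondary nuisance is the bookkeeping at ties, where $\tilde k_i$ crosses $k$ and $\xi_i$ switches between its digamma and logarithm forms; the bound $\abs{\log(w)-\psi(w)}\le1/w$ noted in the text keeps each such jump at $O(1/k)$ over the $O(k)$ indices where it can occur, so it is absorbed into terms already counted.
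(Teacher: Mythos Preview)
Your proposal is correct and follows the paper's proof closely: Efron--Stein, then split indices into those where $\rho_{k,i}$ or the $\psi/\log$ branch changes (at most $O(k)$ of them by the $\ell_\infty$ multiplicity lemma, each worth $O(\log n)$) versus those where only the subspace counts shift by one, whose total contribution is $O(\log n)$ via a harmonic-sum bound; the paper organises the cases through the intermediate removed sample $W^{i-}$ and invokes the harmonic bound as the second part of \cite[Lemma~C.1]{gao2017estimating}, whereas your dyadic-in-rank argument is a self-contained proof of exactly that lemma. One small omission: in the $\log$ branch with radius unchanged and $\tilde k_i>k+1$, the count $\tilde k_i$ can itself move by one (when $W_j$ or $W_j'$ sits on the full-space boundary), so a $1/(\tilde k_i-1)$ term belongs alongside your three subspace terms; it is controlled by the same dyadic bound applied in the full $XYZ$ space, so the gap is cosmetic.
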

The proof can be found in App.\ \ref{varProof}.

\begin{cor}\label{conv}
  Let $W_1,\dots,W_n$ be an independent,
  identically distributed sample.
  Then
  \begin{equation}
    \begin{split}
      P&\paren{\abs{\hat I_n(W) - \E\brac{\hat I_n(W)}} > t}\\
      &\leq 2\exp\curly{\frac{-t^2n}{2592k^2\gamma_d^2(\log n)^2}}
    \end{split}
  \end{equation}
  where $\gamma_d$ is a constant that is only dependent on the
  dimension of $W$.
\end{cor}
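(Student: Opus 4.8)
The plan is to derive the inequality from McDiarmid's bounded differences inequality applied to $\hat I_n=\hat I_n(W_1,\dots,W_n)=\frac1n\sum_{i=1}^n\xi_i$, viewed as a function of the $n$ independent inputs. Writing $W^{(j)}$ for a sample that agrees with $W$ except in the $j$th coordinate, it suffices to prove the deterministic estimate
\[
  \bigl|\hat I_n(W)-\hat I_n(W^{(j)})\bigr|\;\le\;\frac{72\,k\,\gamma_d\log n}{n}
\]
for every $j$ and every such pair; then McDiarmid with $\sum_{j=1}^n c_j^2 = n c^2$ and $c = 72k\gamma_d(\log n)/n$ gives $P(|\hat I_n-\E\hat I_n|>t)\le 2\exp\{-2t^2/(nc^2)\}$, which is exactly the stated bound (the constants match: $2/(nc^2)=n/(2592k^2\gamma_d^2(\log n)^2)$). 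This bounded-difference estimate is also what feeds, via Efron--Stein, into the proof of Theorem~\ref{variance}, so it is natural to isolate it as a lemma used by both.

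First I would record a uniform bound on the local terms. Since $\psi$ and $\log$ are increasing, and within each subspace $k\le\tilde k_i\le n_{XZ,i}\le n_{Z,i}\le n-1$ and $k\le\tilde k_i\le n_{YZ,i}\le n_{Z,i}\le n-1$ (a point in the full-space $k$NN ball lies in each projected ball), together with $|\psi(m)-\log m|\le 1$ for $m\ge1$, both branches of (\ref{cmiprop}) yield $|\xi_i|\le C_0\log n$ for an absolute constant $C_0$. Next I would split the index set. Let $S_j$ be the set of $i$ for which $W_j$ or its replacement is among the $k+1$ nearest neighbors of $W_i$, in the relevant configuration. The classical cone/orthant covering bound for $k$NN graphs in $(\R^d,\ell_\infty)$ --- a fixed point can be among the $m$ nearest neighbors of at most $m\gamma_d$ sample points, $\gamma_d$ depending only on $d=d_X+d_Y+d_Z$ --- gives $|S_j|\le 2(k+1)\gamma_d$. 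For $i\in S_j\cup\{j\}$ I would just use the uniform bound: the contribution to $\sum_i|\Delta\xi_i|$ is at most $(|S_j|+1)\cdot 2C_0\log n = O(k\gamma_d\log n)$.

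The work is in the remaining indices. For $i\notin S_j\cup\{j\}$ the full-space $k$NN radius $\tfrac12\rho_{k,i}$ is unchanged, one checks that the branch of (\ref{cmiprop}) cannot switch (switching would force $W_j$ to sit at that radius as a $\le(k+1)$th neighbor, putting $i\in S_j$), and $\xi_i$ changes only through the counts $\tilde k_i,n_{XZ,i},n_{YZ,i},n_{Z,i}$, each moving by at most $1$; since all are $\ge k\ge2$, $|\Delta\xi_i|\le\sum_W |\psi(n_{W,i}\pm1)-\psi(n_{W,i})|\le\sum_W \tfrac{2}{n_{W,i}}\mathbf 1\{\,\|w_i-w_j\|_\infty\le\tfrac12\rho_{k,i}\,\}$, the sum over $W$ ranging over the full space and the three subspaces. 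The key device is that, for a fixed $W$ and fixed point $w_j$, the number of $i$ for which $w_j$ ranks among the $m$ closest points (in subspace $W$) to $w_i$ is at most $m\gamma_d$; sorting the ranks of $w_j$ shows the $t$th smallest is $\ge t/\gamma_d$, so $\sum_i \tfrac{1}{n_{W,i}}\mathbf 1\{\cdots\}\le\sum_{t\ge1}\gamma_d/t\le\gamma_d H_n = O(\gamma_d\log n)$. Summing the four subspaces and accounting for both $W_j$ and its replacement, the total contribution of these indices is $O(\gamma_d\log n)$. Combining, $\bigl|\sum_i\Delta\xi_i\bigr| = O(k\gamma_d\log n)$; tracking the constants yields the factor $72$, and dividing by $n$ and invoking McDiarmid finishes the proof.

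The hard part is precisely this last step. A single-coordinate change touches a large number of the $\xi_i$, because the counts $n_{W,i}$ are taken over projections onto the $XZ$-, $YZ$- and $Z$-subspaces using the \emph{full-space} $k$NN radius, so a point far away in $\R^{d_X+d_Y+d_Z}$ can still lie inside a projected ball; a naive count of affected indices is $\Theta(n)$ and useless. What rescues the argument is the trade-off between the size of each perturbation, which is inversely proportional to the local count $n_{W,i}$, and the harmonic bound $\sum_i 1/n_{W,i}=O(\gamma_d\log n)$ coming from the bounded in-degree of $k$NN graphs. Getting this balance right, and separately verifying that the branch switch in (\ref{cmiprop}) and ties at the $k$NN distance on discrete data only ever affect indices already collected into $S_j$, is where the care lies.
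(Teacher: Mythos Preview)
Your proposal is correct and follows essentially the same route as the paper: the corollary is an immediate consequence of McDiarmid's bounded-differences inequality once one has the deterministic bound $\sup\bigl|\hat I_n(W)-\hat I_n(W^{(j)})\bigr|\le 72k\gamma_d(\log n)/n$, and that bound is precisely what is established inside the proof of Theorem~\ref{variance} via the $k$NN in-degree bound (your cone-covering step) together with the harmonic sum $\sum_i 1/n_{W,i}=O(\gamma_d\log n)$. The only organizational difference is that the paper first reduces replacement to \emph{removal} via $\bigl|\hat I_n(W)-\hat I_n(W^{(j)})\bigr|\le 2\sup\bigl|\hat I_n(W)-\hat I_n(W^{j-})\bigr|$, which spares one from tracking $W_j$ and its replacement simultaneously and makes the branch-switch case (your concern about ties at the $k$NN radius) slightly cleaner to dispatch.
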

The proof can be found in App.\ \ref{convProof}.

Despite the estimator's unbiasedness in large samples, it is biased
toward zero on high-dimensional data with a fixed sample size,
suffering from the curse of dimensionality as $k$NN regression does.

\begin{thm}\label{hiDim}
  Assume $X$ and $Y$ have fixed-dimension and that $Z=
  (Z_1,Z_2,\hdots,Z_d)$ is 
  a $d$-dimensional random vector.
  If the entropy rate of $Z$ is nonzero, that is,
  $\lim_{d\rightarrow\infty}\frac{1}{d}H(Z)\neq 0$,
  then
  $\hat I_{\text{prop}}(X,Y|Z) \xrightarrow{P} 0$ (converges in
  probability) as $d\rightarrow\infty$.
\end{thm}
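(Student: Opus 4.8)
The plan is to prove that $\hat I_{\text{prop}}(X;Y|Z)$ equals $0$ exactly, on an event whose probability tends to $1$ as $d\to\infty$; since the estimator is nonnegative, this is precisely the asserted convergence in probability.

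The algebraic core is the elementary fact that discarding coordinates cannot increase an $\ell_\infty$ distance, so that for any subspace $W$ containing the $Z$-coordinates, $\norm{z_i-z_j}_\infty \le \norm{w_i-w_j}_\infty \le \norm{(x_i,y_i,z_i)-(x_j,y_j,z_j)}_\infty$. Consider the event
\[
  E_d = \Big\{\,\norm{z_i-z_j}_\infty \ge \max\big(\norm{x_i-x_j}_\infty,\ \norm{y_i-y_j}_\infty\big)\ \text{for all } 1\le i<j\le n\,\Big\}.
\]
On $E_d$, the $\ell_\infty$ distance between any two of the $n$ sample points is the same in the full space $XYZ$ and in each of the subspaces $XZ$, $YZ$, and $Z$. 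Hence for each $i$ the $k$-nearest-neighbour distance $\tfrac12\rho_{k,i}$ is the same number in all four spaces, and the number of sample points within that radius of point $i$ is the same in all four: $\tilde k_i = n_{XZ,i} = n_{YZ,i} = n_{Z,i}$ (note also $\tilde k_i\ge k$ always, so one of the two branches of (\ref{cmiprop}) applies). Substituting equal arguments into either branch of (\ref{cmiprop}) gives $\xi_i = 0$ for every $i$, so $\hat I_{\text{prop}}(X;Y|Z) = \max\{0,0\} = 0$ on $E_d$.

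It then remains to show $\P(E_d)\to 1$. Because $X$ and $Y$ have fixed dimension and are independent of $Z$, I would condition on the $(x_i,y_i)$ and set $R = \max_{i\ne j}\max(\norm{x_i-x_j}_\infty,\norm{y_i-y_j}_\infty)$, an almost surely finite random variable not depending on $d$. A union bound over the $\binom{n}{2}$ pairs then reduces the claim, after conditioning, to showing that for each fixed $r\ge 0$,
\[
  \P\big(\norm{Z - Z'}_\infty < r\big) = \P\big(|Z_\ell - Z'_\ell| < r\ \text{ for all }\ell\le d\big) \longrightarrow 0 \quad\text{as } d\to\infty,
\]
where $Z,Z'$ are independent draws from the $d$-dimensional law of $Z$; bounded convergence (using $R<\infty$ a.s.) then gives $\P(E_d^c)\to 0$. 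This is the only place the hypothesis is used: a positive entropy rate keeps the per-coordinate ``agreement within $r$'' probabilities bounded away from $1$ along the process (a coordinate confined to an interval of diameter less than $r$ contributes nothing to $H(Z)$), so the displayed probability decays, sub-multiplicatively in the i.i.d.\ or stationary-ergodic case.

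The main obstacle is exactly this last implication: converting ``$\tfrac1d H(Z_1,\dots,Z_d)$ bounded away from $0$'' into ``$\norm{Z-Z'}_\infty$ exceeds any fixed threshold with probability tending to $1$'', and in particular ruling out laws that sustain a positive entropy rate while concentrating their mass on a set of small diameter. I expect that handling this cleanly requires the coordinate process of $Z$ to be i.i.d.\ (or stationary and suitably mixing), so that the event above either factorizes or admits an Azuma-type concentration bound, together with the observation that capping a coordinate's spread caps its contribution to the joint entropy. Once $\P(E_d)\to1$ is established, combining it with the preceding paragraphs completes the proof.
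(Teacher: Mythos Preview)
Your route differs from the paper's. The paper never introduces $E_d$; it uses the sandwich $k\le\tilde k_i,\,n_{XZ,i},\,n_{YZ,i}\le n_{Z,i}$ and drives $n_{Z,i}\xrightarrow{P}k$. Concretely: from the positive entropy rate it derives, via the chain-rule factorization $P_Z(B(z,r))=\prod_{\ell\le d} P_{Z_\ell\mid Z^{\ell}}(r)$ and a Ces\`aro argument, that $P_Z(B(z,r))\to0$ for every fixed $z$ and $r$; it then argues the $k$NN ball mass $P_Z(\rho_k)$ also vanishes, so the binomial parameter governing $n_{Z,i}-k$ tends to $0$; the sandwich and the continuous mapping theorem give $\xi_i\xrightarrow{P}0$. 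Your observation that on $E_d$ every pairwise $\ell_\infty$-distance is attained in the $Z$-coordinates, forcing $\tilde k_i=n_{XZ,i}=n_{YZ,i}=n_{Z,i}$ and hence $\xi_i=0$ exactly, is correct and more direct: it bypasses the binomial representation and the continuous-mapping step entirely.

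Two gaps remain. First, you assert that $X,Y$ are independent of $Z$; the theorem does not assume this, and under dependence the conditional law of $Z$ given the $(x_i,y_i)$ need not inherit the entropy-rate hypothesis, so your conditioning step is unjustified. The fix is to avoid conditioning: for any $r>0$,
\[
\P(E_d^c)\le\binom{n}{2}\Big[\P\big(\norm{Z-Z'}_\infty<r\big)+\P\big(\max(\norm{X-X'}_\infty,\norm{Y-Y'}_\infty)\ge r\big)\Big],
\]
choose $r$ large to control the second term, then $d$ large for the first. Second, you leave the implication ``positive entropy rate $\Rightarrow \P(\norm{Z-Z'}_\infty<r)\to0$'' as an acknowledged obstacle and suggest it may require i.i.d.\ or mixing coordinates. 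The paper's chain-rule/Ces\`aro contradiction delivers the pointwise version $P_Z(B(z,r))\to0$ for each fixed $z$ with no stationarity hypothesis on the $Z$-coordinates, so that concern is misplaced; what you would still owe is the passage from the pointwise statement to $\P(\norm{Z-Z'}_\infty<r)=\E\big[P_Z(B(Z,r))\big]\to0$, which is not automatic since the integrating law itself changes with $d$.
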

The proof can be found in App.\ \ref{hiDimProof}.

\section{Experiments}\label{experiments}

To evaluate the empirical performance of the proposed estimator, we compared it
to the FP estimator for continuous variables found in
\S~\ref{fp} and to two versions of the RAVK estimator for CMI in
\S~\ref{mixedestimation} on simulated mixed data from various
setting.
Both RAVK1 and RAVK2 are calculated using eq (\ref{ravkCMI}), but
using different values for $\tilde k_i$.
RAVK1 uses eq (\ref{ktildeGao}) and RAVK2 uses eq (\ref{newtildek}).
The FP estimator, as it was designed for exclusively continuous data,
when $\rho_{k,i} = 0$ will compute $0$ for $n_{w,i}^*$ from eq~\ref{nwisl},
so $\psi$ will be undefined.
In the simulations, we used $\max\curly{n_{w,i}^*, 1}$.
For greater visibility, all figures show positive and negative
estimator values (even though CMI is non-negative).
Specifically, all figures show the proposed estimator as
$\frac{1}{n}\sum_{i=1}^n\xi_i$ rather than equation~(\ref{propest}).
All simulation data, methods
code, and visuals were done in Python~3.6.5.  We simulated data from differing
distributions with 100 observations up to 1000 in intervals of 100.  The violin
plots in Fig.\ \ref{sim1}--\ref{sim4} show the distribution of estimates from
100 simulated datasets for each sample size.  The ``$\times$''
markers in each violin plot indicates the mean of all estimates and
the $-$ represent to most extreme values.
For both the proposed and continuous estimator, we used $k=7$ for all
datasets.

The first simulation (Fig.\ \ref{sim1}) was inspired by
\cite[example 4.4.5]{casella2002}.
In this scenario, a mother insect lays eggs at a random rate,
$X\sim$~Exponential$(10)$.
The number of eggs she lays is $Z\sim$~Poisson$(X)$, and
the number of the eggs that survive is $Y\sim$~Binomial$(Z,0.5)$.
In this Markov chain ($X\rightarrow Z\rightarrow Y$), $X$ and $Y$ are
marginally dependent $X\nindep Y$ but independent conditioning on $Z$, $X\indep
Y|Z$ so that $I(X;Y|Z) = 0$.

The second simulation (Fig.\ \ref{sim2}) is from \cite{gao2017estimating}: 
$X\sim$~Discrete Uniform$(0,3)$ and $Y\sim$~Continuous Uniform$(X,X+2)$ with an
additional, independently generated random variable, $Z\sim$~Binomial$(3,
0.5)$.
Here, $X\nindep Y|Z$ and $I(X;Y|Z) = \log 3 - 2\log 2/2$.
This example, a combination of discrete and continuous random
variables is common in many applications.
Here, the discrete variables are numeric but it is also reasonable to
use zero-one distance metric for non-numeric categorical variables.

The third simulation (Fig.\ \ref{sim3}) places probability mass of $0.4$ at
both $(1,1)$, and $(-1,-1)$ and probability mass of $0.1$ at $(1,-1)$ and
$(-1,1)$ with an independently generated $Z\sim$~Poisson$(2)$.  In this case,
$X\nindep Y|Z$ with $I(X;Y|Z) = 2\cdot 0.4\log(0.4/0.5^2) + 2\cdot
0.1\log(0.1/0.5^2)$.
In this example, all variables are discrete.

The fourth simulation is also from \cite{gao2017estimating}.  $X$ and $Y$ are a
mixture distribution where with probability $\frac{1}{2}$, $(X,Y)$ is
multivariate Gaussian with a correlation coefficient of 0.8 and with
probability $\frac{1}{2}$, $(X,Y)$ places probability mass of 0.4 at $(1,1)$,
and $(-1,-1)$ and probability mass of 0.1 at $(1,-1)$ and $(-1,1)$, as in the
third experiment.  $Z$ is an independently generated Binomial$(3,0.2)$ so that
$I(X;Y|Z) = I(X;Y)$.  We separate the domain of the integral into its discrete
and continuous parts; that is, $(1,1)$, $(-1,-1)$, $(1,-1)$ and $(-1,1)$ make
up the discrete part and everywhere else the continuous part.  From here we
calculate MI on each partition by multiplying the distribution by
$\frac{1}{2}$, yielding $I(X;Y|Z) = 0.4\log(2 \cdot 0.4/0.5^2) + 0.1\log(2
\cdot 0.1/0.5^2) + 0.125\log(4/(1 - 0.8^2))$.  Results are in Fig.\ \ref{sim4}.
In HIV research, for example HIV viral load, the amount of
virus in a milliliter of a patient's blood can only be measured to a minimum
threshold.
Below that threshold, depending on the assay used, a patient is said
to be undetectable. 
This is a real-world example of a random variable that is itself a mix
of discrete and continuous, difficult for most regression models.
The this experiment shows that the proposed estimator has no problem
in this scenario.

\begin{figure}[!t]
  \centering
  \includegraphics[page = 1, width = 3.5in]{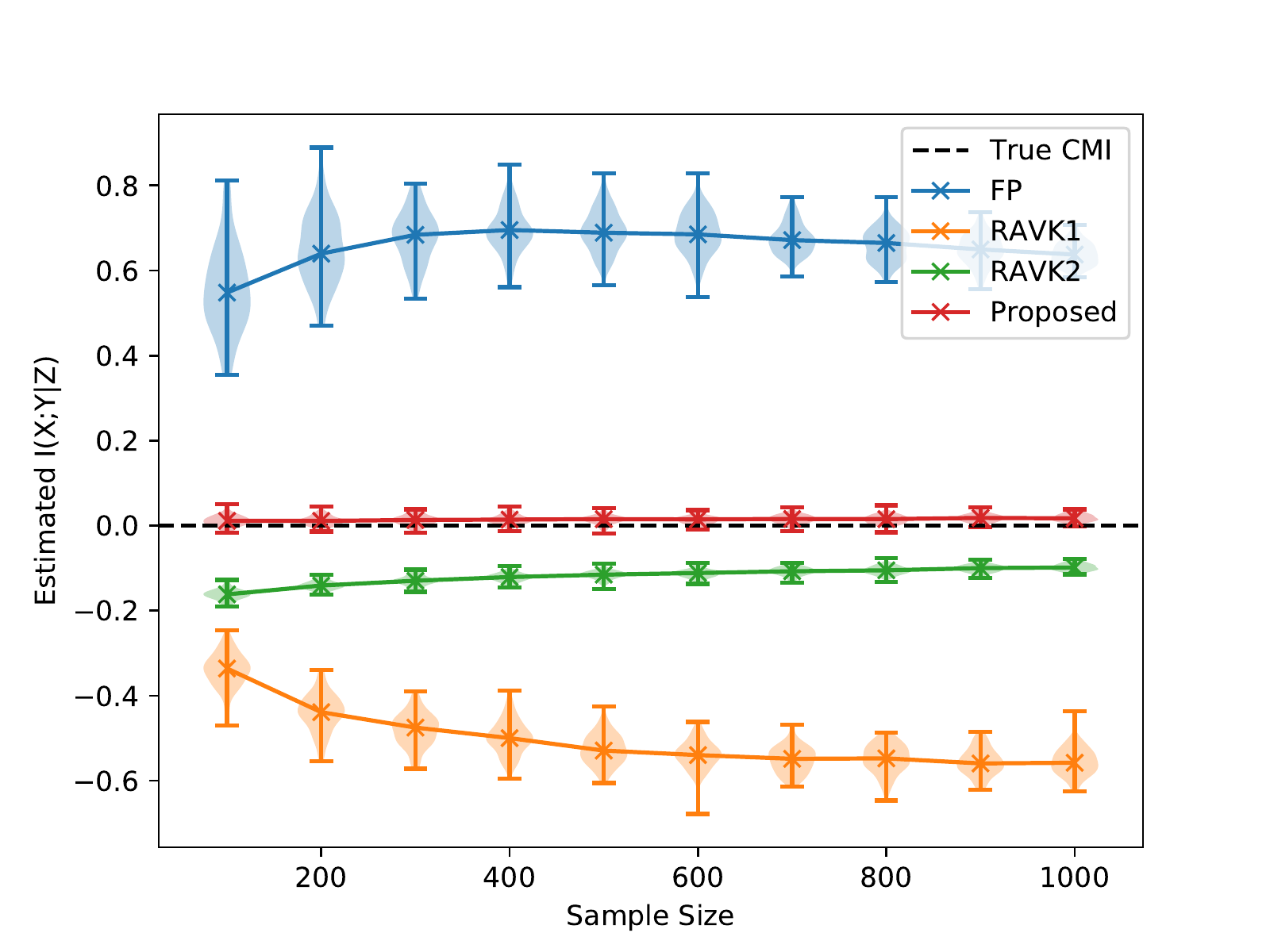}
  \caption{$X\sim$~Exponential$(10)$, $Z\sim$~Poisson$(X)$, and
    $Y\sim$~Binomial$(Z,0.5)$, $I(X;Y|Z) = 0$.}
  \label{sim1}
\end{figure}

\begin{figure}[!t]
  \centering
  \includegraphics[page = 2, width = 3.5in]{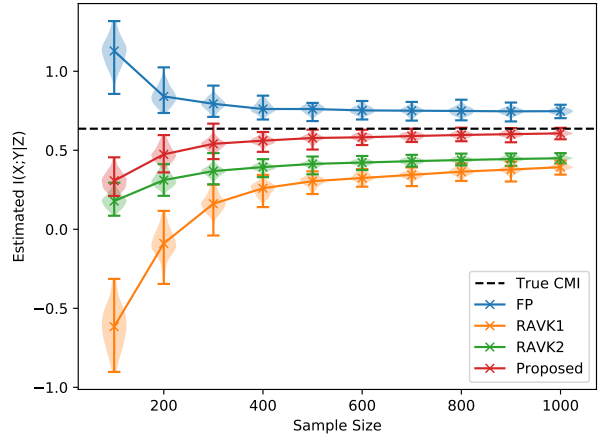}
  \caption{$X\sim$~Discrete Uniform$(0,3)$ and $Y\sim$~Continuous
    Uniform$(X,X+2)$, $Z\sim$~Binomial$(3, 0.5)$, $I(X;Y|Z) = \log 3 -
    2\log 2/2$.}
  \label{sim2}
\end{figure}

\begin{figure}[!t]
  \centering
  \includegraphics[page = 4, width = 3.5in]{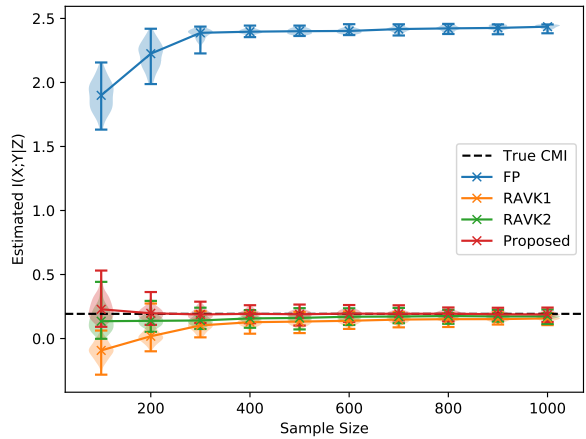}
  \caption{$\P((X,Y) = (1,1)) = \P((X,Y) = (-1,-1)) = 0.4$,
    $\P((X,Y) = (1,-1)) = \P((X,Y) = (1,-1)) = 0.1$,
    $Z\sim$~Poisson$(2)$, $I(X;Y|Z) = 2\cdot
    0.4\log(0.4/0.5^2) + 2\cdot 0.1\log(0.1/0.5^2)$.}
  \label{sim3}
\end{figure}

\begin{figure}[!t]
  \centering
  \includegraphics[page = 3, width = 3.5in]{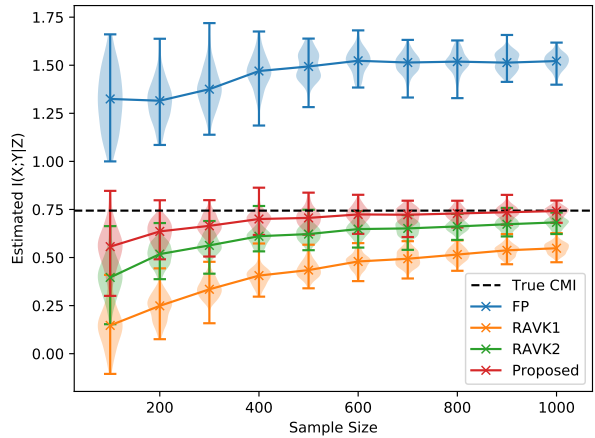}
  \caption{$X$ and $Y$ are a mixture distribution where with probability
    $\frac{1}{2}$, $(X,Y)$ is multivariate Gaussian with a correlation
    coefficient of 0.8 and with probability $\frac{1}{2}$, $(X,Y)$ places
    probability mass of 0.4 at $(1,1)$, $(-1,-1)$ and probability mass of
    0.1 at $(1,-1)$ and $(-1,1)$, as in the third experiment.
    $Z$ is an independently generated Binomial$(3,0.2)$.
    $I(X;Y|Z) = 0.4\log(2 \cdot 0.4/0.5^2) +
    0.1\log(2 \cdot 0.1/0.5^2) + 0.125\log(4/(1 - 0.8^2))$}
  \label{sim4}
\end{figure}

\section{Conclusion}

We have presented a non-parametric estimator of CMI (and MI) for
arbitrary combinations of discrete, continuous, and mixed variables.
Under mild assumptions, the estimator is consistent,
and on empirical simulations, the proposed estimator performs better
over other similar estimators in all sample-sizes.
Yet, it is very easy to understand how the estimate is calculated.

The development of this estimator was primarily motivated by data from
scientific applications.
There are clear advantages of using information in contrast to regression, for
example, for scientific inquiry that we reiterate.  While this method does
require independent, identically distributed data, it does not require
parametric assumptions about variable distributions or specific functional
relationships between variables such as linearity to quantify dependence.  Due
to the data processing inequality, greater shared information among random
variables indicates closer causal proximity in causal chains.  In this vein,
CMI (or MI) estimates close to or equal to zero indicate likely conditional (or
marginal) independence.  For these reasons, information is ideal for inference
and discovery causal of relationships.  And, like regression, information is
easily interpretable: CMI, $I(X;Y|Z)$, can be understood as the degree of
association or statistical dependence shared between $X$ and $Y$ given $Z$ or
controlling for $Z$.

As of now, the sampling distribution for the estimator is unknown.
Approximating the distribution with, for example, the bootstrap may be
a way forward.
However, one should note that the local estimates for a point,
$\xi_i$, is associated with its neighbors.
A more analytic approximation to the sampling distribution of
$\hat{I}$ is an interesting, valuable open problem for further
inference including testing and confidence intervals.

A lot can be done without knowing the sampling distribution as well.
Many machine learning algorithms perform tasks such a feature
selection, structure learning, and clustering using properties of CMI
without knowing its sampling distribution.
The proposed method of CMI estimation makes these algorithms
accessible for application fields.

While not ideal that this estimator is biased toward zero in high dimensions,
this knowledge will help create algorithms that can account for this weakness
or even exploit it.  Finally, we encourage others to continue researching
innovative methodologies to accommodate fields whose data is too messy for most
current data science methodologies.


%

\appendices
\section{Proof of Theorem \ref{unbiased}}\label{consistencyProof}
\begin{proof}
  Define
  $f \equiv \frac{dP_{XY|Z}}{d\paren{P_{X|Z}\times P_{Y|Z}}}$ and for a
  random variable $W$ on $\mathcal{W}$ with probability measure, $P_W$,
  and $w\in\mathcal{W}$, set
  \begin{equation}P_W(w,r) = P_W\paren{\curly{v\in\mathcal{W}:
        \norm{v-w}_\infty\leq r}}.\end{equation}
  Let $(x_1,y_1,z_1), \hdots, (x_n,y_n,z_n)$ be an i.i.d. random sample
  from $P_{XYZ}$ and that $\hat I_n(X,Y|Z)$ is the value of
  $\hat I_{\text{prop}}(X,Y|Z)$ for this sample.

  Partition $\cX\times\cY\times\cZ$ into three disjoint sets:
  \begin{enumerate}
  \item $\Omega_1 = \curly{(x,y,z): f = 0}$
  \item $\Omega_2 = \curly{(x,y,z): f>0, P_{XYZ}(x,y,z,0)>0}$
  \item $\Omega_3 = \curly{(x,y,z): f>0, P_{XYZ}(x,y,z,0)=0}$
  \end{enumerate}
  so that $\cX\times\cY\times\cZ = \Omega_1\cup\Omega_2\cup\Omega_3$.
  Notice that
  \begin{equation}\E\brac{\hat I_n(X,Y|Z)} =
    \E\brac{\frac{1}{n}\sum_{i=1}^n\xi_i} =
    \E\brac{\xi_1}\end{equation}
  so that we only need show that $\E[\xi_i]\rightarrow I(X;Y|Z)$ for one
  point.
  In light of this, we drop the subscript.
  Using the law of total expectation and properties of integrals,
  \begin{align*}
    &\abs{\E[\xi] - I(X;Y|Z)}\\
    &= \abs{\E_{XYZ}\brac{\E[\xi|X,Y,Z]} - \int f(x,y,z)dP_{XYZ}(x,y,z)}\\
    &\leq \int\abs{\E[\xi|x,y,z] - f(x,y,z)}dP_{XYZ}(x,y,z)\\
    &= \int_{\Omega_1}\abs{\E[\xi|x,y,z] - f(x,y,z)}dP_{XYZ}(x,y,z)\\
    &\quad +\int_{\Omega_2}\abs{\E[\xi|x,y,z] - f(x,y,z)}dP_{XYZ}(x,y,z)\\
    &\quad +\int_{\Omega_3}\abs{\E[\xi|x,y,z] - f(x,y,z)}dP_{XYZ}(x,y,z).
  \end{align*}
  For clarification, the value of $\E[\xi|X,Y,Z]$ depends on both the
  value of the value of the random vector $(X,Y,Z)$ and rest of the sample.
  We show that $\int_{\Omega_i}\abs{\E[\xi|x,y,z] -
    f(x,y,z)}dP_{XYZ}\rightarrow 0$ for each $i = 1,2,3$ in three cases.

  \textbf{Case1:} $(x,y,z)\in\Omega_1$.
  Let $\pi_{XY}(\Omega_1) = \curly{(x,y):(x,y,z)\in\Omega_1}$ be the
  projection onto the first two coordinates of $\Omega_1$.
  Using the definition of $f$ as the RN derivative,
  \begin{align*}
    &P_{XY|Z}(\pi_{XY}(\Omega_1))\\
    &= \int_{\pi_{XY}(\Omega_1)}fd(P_{X|Z}\times P_{Y|Z})\\
    &= \int_{\pi_{XY}(\Omega_1)}0d(P_{X|Z}\times P_{Y|Z}) = 0.
  \end{align*}
  Then $P_{XYZ}(\Omega_1) = (P_{XY|Z}\times P_Z)(\Omega_1) = 0$.
  So,
  \begin{equation}
    \int_{\Omega_i}\abs{\E[\xi|x,y,z] - f(x,y,z)}dP_{XYZ}= 0.
  \end{equation}

  \textbf{Case 2:} Assume $(x,y,z)\in\Omega_2$.
  This is the partition of discrete points because singleton have
  positive measure in $\cX\times\cY\times\cZ$.
  Using lemma~\ref{cmiDeriv}, we have
  \begin{equation}
    f(x,y,z) = \frac{P_{XYZ}(x,y,z,0)P_Z(x,y,z,0)}
    {P_{XZ}(x,y,z,0)P_{YZ}(x,y,z,0)}.
  \end{equation}
  Knowing the exact value of $f$ allows us to work with it directly.

  Let $\rho$ be the distance from $(x,y,z)$ to its $k$NN.
  Proceed in two cases, when $\rho = 0$ and when $\rho>0$ by writing
  the integrand as dominated by the following two terms:
  \begin{align*}
    &\abs{\E[\xi|x,y,z] - \log f(x,y,z)}\\
    &\leq \abs{\E[\xi|x,y,z, \rho>0] - \log f(x,y,z)}\P(\rho > 0)\\
    &\quad+\abs{\E[\xi|x,y,z, \rho = 0] - \log f(x,y,z)}\P(\rho = 0)\\
    &\equiv \abs{\E[\xi|\rho>0] - \log f}\P(\rho > 0)\\
    &\quad +\abs{\E[\xi|\rho = 0] - \log f}\P(\rho = 0)
  \end{align*}
  suppressing the $x,y,z$ for brevity.
  We bound $\abs{\E[\xi|\rho>0] - \log f}$ and $\P(\rho = 0)$ and show that
  $\abs{\E[\xi|\rho = 0] - \log f}$ and $\P(\rho > 0)$ converge to
  zero.
  By proposition~\ref{discFin}, there exist a finite set of points with
  positive measure $E\subseteq\Omega_2$ such that
  \begin{equation}P_{XYZ}(\Omega_2\backslash E) <
    \frac{\epsilon}{3(4\log n + \log C)}.\end{equation}

  Starting with $\P(\rho > 0)$,
  $\rho>0$ when less than $k$ points in the sample equal $(x,y,z)$.
  The number of points exactly equal to $(x,y,z)$ has a binomial
  distribution with parameters, $n-1$ and $P_{XYZ}(x,y,z,0)\equiv P_{XYZ}(0)$,
  $\text{Binomial}(n-1,P_{XYZ}(0))$.
  Because $\frac{k}{n}\rightarrow 0$ as $n\rightarrow\infty$, there
  must be an $n$ sufficiently large such that
  \begin{align*}
    &\max\curly{\frac{k}{n},
    \frac{-2}{n}\log\paren{\frac{\epsilon}{3(4\log n + \log C)|E|}}
    + \frac{2k}{n}}\\
    &\leq \min_{(x,y,z)\in E}P_{XYZ}(x,y,z,0).
  \end{align*}
  This inequality ensures that
  $k-1\leq (n-1)P_{XYZ}(x,y,z,0)$ for all $(x,y,z)\in E$ to use
  Chernoff's inequality \cite[\S 2.2]{boucheron2013concentration}:
  \begin{align*}
    \P(\rho>0)
    &= \P(\text{Binomial}(n-1,P_{XYZ}(0))\leq k-1)\\
    &\leq \exp\curly{\frac{-[(n-1)P_{XYZ}(0) - (k-1)]^2}
    {2P_{XYZ}(x,y,z,0)(n-1)}}\\
    &\leq \exp\curly{-\paren{\frac{1}{2}nP_{XYZ}(0) - k}}\\
    &\leq \frac{\epsilon}{3(4\log n +\log C)|E|}.
  \end{align*}
  To bound $\abs{\E[\xi|\rho>0] - \log f}$, first notice that
  $\tilde k, n_{XZ}, n_{YZ}, n_Z\leq n$.
  If $k=\tilde k$, then $\xi$ uses $\psi$ and if If $k<\tilde k$, then
  $\xi$ uses $\log$, so that
  $\abs{\xi} \leq \max\curly{4\psi(n), 4\log n} = 4\log n$.
  And, $f\leq C$ by assumption so
  $\abs{\E[\xi|\rho>0] - \log f} < 4\log n + \log C$.

  Now we show that $\abs{\E[\xi|\rho = 0] - \log f}\rightarrow 0$.
  When $\rho=0$, there must be $k$ or more points exactly equal to
  $(x,y,z)$.
  Because a point in the sample being equal to $(x,y,z)$ is an independent,
  Bernoulli event, and because when $\rho=0$, $\tilde k$, defined in
  (\ref{newtildek}), will be the total number of points equal to
  $(x,y,z)$, $\tilde k - k\sim \text{Binomial}(n - k - 1, P_{XYZ}(0))$.
  We can make identical arguments for $n_{XZ}-k, n_{YZ}-k$, and $n_Z-k$ in
  their respective subspaces so that $n_{XZ}-k\sim \text{Binomial}(n - k - 1,
  P_{XZ}(0))$, $n_{YZ}-k\sim \text{Binomial}(n - k - 1, P_{YZ}(0))$, and
  $n_Z-k\sim \text{Binomial}(n - k - 1, P_Z(0))$.
  \cite[Lemma B.2]{gao2017estimating} provides a rigorous proof for
  this.

  Showing that $\abs{\E[\xi|\rho = 0] - \log f}\rightarrow 0$,
  we can choose $k$ and $n$ sufficiently large so that
  $\frac{1}{k}\leq \frac{\epsilon}{48|E|}$, $k\geq
  \frac{P_Z(0)}{1-P_Z(0)}$ and
  $\frac{k}{n}\leq \frac{\epsilon}{24|E|}$.
  Assume $\tilde k = k$, so that $\xi$ will use $\psi$.
  Using lemma~\ref{discBound} four times and that
  $P_{XYZ}(0)\leq P_{XZ}(0), P_{YZ}(0)\leq P_Z(0)$,
  \begin{align*}
    &\abs{\E[\xi|\rho = 0] - \log f}\\
    &= \Big|\E[\psi(\tilde k)|\rho = 0] - \E[\psi(n_{XZ})|\rho = 0]\\
    &\quad - \E[\psi(n_{YZ})|\rho = 0] + \E[\psi(n_{Z})|\rho = 0]\\
    &\quad - \log \frac{(nP_{XYZ}(0))(nP_Z(0))}{(nP_{XZ}(0))(nP_{YZ}(0))}\Big|\\
    &\leq \abs{\E[\psi(\tilde k)|\rho = 0] - \log nP_{XYZ}(0)}\\
    &\quad + \abs{\E[\psi(n_{XZ})|\rho = 0] - \log nP_{XZ}(0)}\\
    &\quad + \abs{\E[\psi(n_{YZ})|\rho = 0] - \log nP_{YZ}(0)}\\
    &\quad + \abs{\E[\psi(n_{Z})|\rho = 0] - \log nP_{Z}(0)}\\
    &\leq \frac{2}{k} + \frac{k}{nP_{XYZ}(0)}
      + \frac{2}{k} + \frac{k}{nP_{XZ}(0)}\\
    &\quad+ \frac{2}{k} + \frac{k}{nP_{YZ}(0)}
      + \frac{2}{k} + \frac{k}{nP_{Z}(0)}\\
    &\leq \frac{8}{k} + \frac{4k}{nP_{XYZ}(0)}\\
    &\leq \frac{\epsilon}{6|E|} + \frac{\epsilon}{6|E|P_{XYZ}(0)}.
  \end{align*}
  If $\tilde k > k$, $\xi$ will use $\log$ and rather than $\psi$.
  Lemma~\ref{discBound} shows that the bound used above will also work
  in this case.
  
  It is clear that $\P(\rho = 0)\leq 1$.

  Putting together the previous parts,
  \begin{align*}
    &\int_{\Omega_2}\abs{\E[\xi|x,y,z] - \log f(x,y,z)}dP_{XYZ}(x,y,z)\\
    &= \sum_{(x,y,z)\in\Omega_2}
      \abs{\E[\xi|x,y,z] - \log f(x,y,z)}P_{XYZ}(x,y,z,0)\\
    &\equiv \sum_{(x,y,z)\in\Omega_2}
      \abs{\E[\xi] - \log f}P_{XYZ}(0)\\
    &= \sum_{(x,y,z)\in E}
      \abs{\E[\xi] - \log f}P_{XYZ}(0)\\
    &\quad + \sum_{(x,y,z)\in\Omega_2\backslash E}
      \abs{\E[\xi] - \log f}P_{XYZ}(0)\\
    &\leq \sum_{(x,y,z)\in E}
      \abs{\E[\xi|\rho>0] - \log f}\P(\rho > 0)P_{XYZ}(0)\\
    &\quad + \sum_{(x,y,z)\in E}
      \abs{\E[\xi|\rho = 0] - \log f}\P(\rho = 0)P_{XYZ}(0)\\
    &\quad + \sum_{(x,y,z)\in\Omega_2\backslash E}
      \abs{\E[\xi] - \log f}P_{XYZ}(0)\\
    &\leq \sum_{(x,y,z)\in E} \log(n^4C)
      \paren{\frac{\epsilon}{3\log(n^4C)|E|}}P_{XYZ}(0)\\
    &\quad + \sum_{(x,y,z)\in E}
      \paren{\frac{\epsilon}{6|E|} +
      \frac{\epsilon}{6|E|P_{XYZ}(0)}}P_{XYZ}(0)\\
    &\quad + \sum_{(x,y,z)\in\Omega_2\backslash E}
      \paren{4\log n + \log C}P_{XYZ}(0)\\
    &\leq \log(n^4C)|E|
      \paren{\frac{\epsilon}{3(\log(n^4C))|E|}}
      + |E|\paren{\frac{\epsilon}{3|E|}}\\
    &\quad + P_{XYZ}\paren{\Omega_2\backslash E}
      \paren{4\log n + \log C}\\
    &= \frac{\epsilon}{3} + \frac{\epsilon}{3}
      + \paren{\frac{\epsilon}{3\log(n^4C)}}\log(n^4C)\\
    &= \epsilon.
  \end{align*}

  \textbf{Case 3:} Assume $(x,y,z)\in\Omega_3$.
  This is the continuous partition because singletons have
  zero measure in $\cX\times\cY\times\cZ$.
  Lemma~\ref{ktildelim} assures that $\tilde k\rightarrow k$ almost
  surely as $n\rightarrow \infty$; $P_{XYZ}\paren{\curly{(x,y,z)\in\Omega_3:
      \tilde k\rightarrow k}} = 1$.
  $\tilde k$ is discrete so there is an $N$ such that for $n\geq N$,
  $\tilde k = k$ with probability one.
  
  Define $F_\rho(r)$ as the cumulative distribution function of the
  $k$NN distance, $r$;
  that is, $F_\rho(r)$ is the probability that that the $k$NN distance
  is $r$ or less.
  Begin by decomposing the integrand into its parts:
  \begin{align}
    &\abs{\E\brac{\xi|X,Y,Z} - \log f(X,Y,Z)}\label{contInt}\\
    &\equiv\abs{\E\brac{\xi} - \log f}\nonumber\\
    &= \abs{\int_0^\infty\paren{\E\brac{\xi|\rho = r} - \log
      f}dF_\rho(r)}\nonumber\\
    &= \Big|\int_0^\infty\E\brac{\xi|\rho = r}
      -\log\paren{\frac{P_{XYZ}(r)P_Z(r)}{P_{XZ}(r)P_{YZ}(r)}}\nonumber\\
    &\quad+\log\paren{\frac{P_{XYZ}(r)P_Z(r)}{P_{XZ}(r)P_{YZ}(r)}}
      -\log fdF_\rho(r)\Big|\nonumber\\
    &= \Big|\int_0^\infty\E\brac{\psi(k) - \psi(n_{XZ}) - \psi(n_{YZ})
      - \psi(n_Z)|\rho = r}\nonumber\\
    &\quad -\log\paren{\frac{(nP_{XYZ}(r))(nP_Z(r))}{(nP_{XZ}(r))(nP_{YZ}(r))}}\nonumber\\
    &\quad+\log\paren{\frac{P_{XYZ}(r)P_Z(r)}{P_{XZ}(r)P_{YZ}(r)}}
      -\log fdF_\rho(r)\Big|\nonumber\\    
    &\leq \abs{\int_0^\infty \psi(k) - \log(nP_{XYZ}(r))dF_\rho(r)}\label{intk}\\
    &\quad+ \abs{\int_0^\infty \E[\psi(n_{XZ})|\rho = r] -
      \log(nP_{XZ}(r))dF_\rho(r)}\label{intnxz}\\
    &\quad+\abs{\int_0^\infty \E[\psi(n_{YZ})|\rho = r] -
      \log(nP_{YZ}(r))dF_\rho(r)}\label{intnyz}\\
    &\quad+\abs{\int_0^\infty \E[\psi(n_Z)|\rho = r] -
      \log(nP_Z(r))dF_\rho(r)}\label{intnz}\\ 
    &\quad+\abs{\int_0^\infty\log\paren{\frac{P_{XYZ}(r)P_Z(r)}{P_{XZ}(r)P_{YZ}(r)}}
      -\log fdF_\rho(r)}\label{intf}.
  \end{align}

  Next, we show that with sufficiently large $n$, each of these terms is less
  than $\epsilon/5$.
  Do to this, we change variables for each integral using
  lemma~\ref{distDeriv}.

  Beginning with (\ref{intk}),
  \begin{align*}
    &\abs{\int_0^\infty \psi(k) - \log(nP_{XYZ})(r)dF_\rho(r)}\\
    &= \abs{\psi(k) - \log n -\int_0^\infty \log P_{XYZ}(r)dF_\rho(r)}\\
    &= \left| \psi(k) - \log n - 
      \int_0^\infty P_{XYZ}(r)  \frac{(n-1)!}{(k-1)!(n-k-1)!}\right.\\
    &\quad \left.\vphantom{\int_0^\infty}[P_{XYZ}(r)]^{k-1}
      [1-P_{XYZ}(r)]^{n-k-1}dP_{XYZ}(r)\right|\\
    &= \left| \psi(k) - \log n - 
      \frac{(n-1)!}{(k-1)!(n-k-1)!}\vphantom{\int_0^\infty} \right.\\
    &\quad \left.\int_0^\infty[P_{XYZ}(r)]^{k}
      [1-P_{XYZ}(r)]^{n-k-1}dP_{XYZ}(r)\right|\\
    &= \abs{\psi(k) - \log n - (\psi(k) - \psi(n))}\\
    &= \abs{\psi(n) - \log(n)} < \frac{1}{n}.
  \end{align*}
  For lines~\ref{intnxz},~\ref{intnyz}, and~\ref{intnz}, consider the
  random variables $n_{XZ}, n_{YZ}$, and $n_Z$ defined in
  line~\ref{nwi}.
  In this case, we know that $\tilde k = k$ almost surely.
  Note that $n_{XZ}, n_{YZ}, n_Z\geq k$.
  Observation $j$ in the sample will contribute to the count of
  $n_{W,i} - k$ for $W\in\curly{(XZ), (YZ), Z}$ when
  $\norm{w_i-w_j}_\infty\leq \rho_{k,i}$ given that it is not
  one of the first $k$ nearest neighbors.
  There are $n - k - 1$ independent, identically distributed, data
  points left not counting the $k$ nearest neighbors or point $i$.
  A point $j$ has probability, $P_W(\rho_{k,i})$, that it is
  within a radius of $\rho_{k,i}$ in the $W$ subspace.
  The probability that a point falls within a radius of
  $\rho_{k,i}$ in the $XYZ$-space is
  $P_{XYZ}(\rho_{k,i})$.
  Using basic conditional probability rules, one can see that the 
  probability that any point contributes to the count of $n_W$ is
  $\frac{P_W(\rho_{k,i}) - P_{XYZ}(\rho_{k,i})}
  {1 - P_{XYZ}(\rho_{k,i})}$.
  Then, for $W\in\curly{XZ,YZ,Z}$
  \begin{equation}\begin{split}
      &n_{W,i}-k\sim\\
      &\quad\text{Binomial}\paren{n-k-1, \frac{P_W(\rho_{k,i}) -
          P_{XYZ}(\rho_{k,i})}{1-P_{XYZ}(\rho_{k,i})}}
    \end{split}\end{equation}
  $P_{XYZ}(\rho_{k,i})\leq P_W(\rho_{k,i})$ for all
  points.
  Choosing $k$ such that $k\geq \frac{15+3\epsilon}{\epsilon}$ and
  applying lemma~\ref{contBound}, we bound 
  lines~\ref{intnxz},~\ref{intnyz}, and~\ref{intnz} by
  $\frac{\epsilon}{5}$.

  Moving to line~\ref{intf}, using lemma~\ref{rnDerivLim}, we have
  \begin{equation}\frac{P_{XYZ}(r)P_Z(r)}{P_{XZ}(r)P_{YZ}(r)}\rightarrow f\end{equation}
  (converges pointwise) as $r\rightarrow 0$ and
  \begin{equation}\label{probsBound}
    \frac{P_{XYZ}(r)P_Z(r)}{P_{XZ}(r)P_{YZ}(r)}\leq C
  \end{equation}
  almost everywhere $[P_{X|Z}\times P_{Y|Z}]$.
  Using Egoroff's theorem, there exists a measurable set, $E\subseteq
  \Omega_3$ such that
  \begin{equation}\label{eContBound}
    P_{XYZ}(\Omega_3\backslash E)\leq \frac{\epsilon}{10\log C}
  \end{equation}
  and 
  \begin{equation}\frac{P_{XYZ}(r)P_Z(r)}{P_{XZ}(r)P_{YZ}(r)}\xrightarrow{U} f\end{equation}
  (converges uniformly) as $r\rightarrow 0$ on $E$.
  Using the uniform convergence on $E$, there exists $r_\epsilon > 0$
  such that for all $r\leq r_\epsilon$
  \begin{equation}
    \abs{\log \frac{P_{XYZ}(r)P_Z(r)}{P_{XZ}(r)P_{YZ}(r)} - \log f}
    \leq \frac{\epsilon}{20}
  \end{equation}
  for all $(x,y,z)\in E$.
  And for sufficiently large $n$, we have
  \begin{equation}\max\curly{\frac{k}{n},
      \frac{-2\log\paren{\frac{\epsilon}{40\log C}} + 2k}{n}}\leq
    P_{XYZ}(r_\epsilon).\end{equation}
  Consider the probability, $\P(\rho > r_\epsilon)$, that a point's
  $k$NN distance is greater than $r_\epsilon$.
  This can only happen when $k-1$ or less neighbors fall within a
  radius of $r_\epsilon$.  There are $n-1$ independent, identically
  distributed points that can potentially fall in to this region
  each with probability, $P_{XYZ}(r_\epsilon)$ so that this also has a
  binomial distribution.
  Again using Chernoff's inequality,
  \begin{align*}
    &\P(\rho > r_\epsilon)\\
    &\leq \exp\paren{\frac{-[(n-1)P_{XYZ}(r_\epsilon) - (k-1)]^2}
      {2P_{XYZ}(r_\epsilon)(n-1)}}\\
    &\leq \exp\paren{-\frac{1}{2}nP_{XYZ}(r_\epsilon) + k}\\
    &\leq \frac{\epsilon}{40\log C}.
  \end{align*}
  With assumption~\ref{fBound}, $f\leq C$, and line~\ref{probsBound} from
  proposition~\ref{rnDerivLim},
  \begin{equation}
    \abs{\log \frac{P_{XYZ}(r)P_Z(r)}{P_{XZ}(r)P_{YZ}(r)} - \log f}\leq 2\log C.
  \end{equation}
  For points $(x,y,z)\in E$,
  \begin{align*}
    &\abs{\int_0^\infty\log \frac{P_{XYZ}(r)P_Z(r)}{P_{XZ}(r)P_{YZ}(r)} -
      \log f dF_\rho(r)}\\
    &\leq\int_0^\infty\abs{\log \frac{P_{XYZ}(r)P_Z(r)}{P_{XZ}(r)P_{YZ}(r)} -
      \log f}dF_\rho(r)\\
    &= \int_0^{r_\epsilon}\abs{\log \frac{P_{XYZ}(r)P_Z(r)}{P_{XZ}(r)P_{YZ}(r)} -
      \log f}dF_\rho(r)\\
    &\quad + \int_{r_\epsilon}^\infty\abs{\log
      \frac{P_{XYZ}(r)P_Z(r)}{P_{XZ}(r)P_{YZ}(r)} -
      \log f}dF_\rho(r)\\
    &\leq \int_0^{r_\epsilon}\frac{\epsilon}{20} dF_\rho(r) +
      \int_{r_\epsilon}^\infty 2\log C dF_\rho(r)\\
    &= \frac{\epsilon}{20}\P(\rho \leq r_\epsilon) + (2\log C) \P(\rho >
      r_\epsilon)\\
    &\leq \frac{\epsilon}{20} + \frac{\epsilon}{20} = \frac{\epsilon}{10}.
  \end{align*}
  
  But, for points $(x,y,z)\in\Omega_3\backslash E$, it is only necessary
  bound the integrand,
  \begin{align*}
    &\abs{\int_0^\infty\log \frac{P_{XYZ}(r)P_Z(r)}{P_{XZ}(r)P_{YZ}(r)} -
      \log f dF_\rho(r)}\\
    &\leq\int_0^\infty\abs{\log \frac{P_{XYZ}(r)P_Z(r)}{P_{XZ}(r)P_{YZ}(r)} -
      \log f}dF_\rho(r)\\
    &\leq\int_0^\infty 2\log C dF_\rho(r)\\
    &\leq 2\log C.
  \end{align*}
  The last step follows because $F_\rho(r)$ is a probability measure.
  Integrating term~\ref{intf} over all of $\Omega_3$,
  \begin{align*}
    &\int_{\Omega_3}\left|\int_0^\infty
      \log\frac{P_{XYZ}(r)P_Z(r)}{P_{XZ}(r)P_{YZ}(r)}
     -\log fdF_\rho(r)\right|dP_{XYZ}\\
    &\leq \int_{E}\left|\int_0^\infty
      \log\frac{P_{XYZ}(r)P_Z(r)}{P_{XZ}(r)P_{YZ}(r)}
      -\log fdF_\rho(r)\right|dP_{XYZ}\\
    &+\int_{\Omega_3\backslash E}\left|\int_0^\infty
      \log\frac{P_{XYZ}(r)P_Z(r)}{P_{XZ}(r)P_{YZ}(r)}
      -\log fdF_\rho(r)\right|dP_{XYZ}\\
    &\leq \int_E \frac{\epsilon}{10}dP_{XYZ} +
      \int_{\Omega_3\backslash E} 2\log C dP_{XYZ}\\
    &= \frac{\epsilon}{10} + (2\log C) P_{XYZ}(\Omega\backslash E)
      \leq \frac{\epsilon}{5}
  \end{align*}
  where we used Ergoroff's theorem from line~\ref{eContBound} in the
  last line.
  Now we integrate line~\ref{contInt} over $\Omega_3$ using the
  previous arguments showing that lines~\ref{intk}~\ref{intnxz},
  ~\ref{intnyz},~\ref{intnz},~and~\ref{intf} are all bounded.
  Choosing $n$ large enough to satisfy the previous conditions,
  we have

  \begin{align*}
    &\int_{\Omega_3} \abs{\E\brac{\xi} - \log f}dP_{XYZ}\\
    &\leq \int_{\Omega_3}\abs{\int_0^\infty \psi(k) -
      \log(nP_{XYZ}(r))dF_\rho}dP_{XYZ}\\
    &+ \int_{\Omega_3}\left|\int_0^\infty \E[\psi(n_{XZ})]
    - \log(nP_{XZ}(r))dF_\rho\right|dP_{XYZ}\\
    &+\int_{\Omega_3}\left|\int_0^\infty \E[\psi(n_{YZ})]
     - \log(nP_{YZ}(r))dF_\rho\right|dP_{XYZ}\\
    &+\int_{\Omega_3}\left|\int_0^\infty \E[\psi(n_Z)]
     - \log(nP_Z(r))dF_\rho\right|dP_{XYZ}\\ 
    &+\int_{\Omega_3}\left|\int_0^\infty
      \log\frac{P_{XYZ}(r)P_Z(r)}{P_{XZ}(r)P_{YZ}(r)}
     -\log fdF_\rho\right|dP_{XYZ}\\
    &\leq \int_{\Omega_3}\frac{\epsilon}{5}dP_{XYZ}+ \int_{\Omega_3}
      \frac{\epsilon}{5}dP_{XYZ}\\
    &\quad+\int_{\Omega_3}\frac{\epsilon}{5}dP_{XYZ}+\int_{\Omega_3}
      \frac{\epsilon}{5}dP_{XYZ} + \frac{\epsilon}{5}\\
    &=\epsilon
  \end{align*}
\end{proof}

\section{Proof of Theorem \ref{variance}}\label{varProof}
\begin{proof}
  Let $ W_1'\hdots, W_n'$ be another random sample of size $n$ such
  that for each $i$, $W_i = (X_i, Y_i, Z_i)$, $W_i' = (X_i', Y_i', Z_i')$ and
  that $W_i \stackrel{d}{=} W_i'$ (equally distributed).  Let $W^{(i)} =
  \{W_1,\hdots, W_{i-1}, W_i', W_{i+1}, \hdots, W_n\}$ and let
  $W^{i-} = \{W_1,\hdots, W_{i-1}, W_{i+1}, \hdots, W_n\}$
  We proceed using the Stein-Efron inequality as in
  \cite[Theorem 3.1]{boucheron2013concentration},
  \begin{equation*}
    \var\paren{\hat I_n(W)}
    \leq \frac{1}{2}\sum_{i=1}^n \E\brac{\hat I_n(W) - \hat
      I_n(W^{(i)})}^2.
  \end{equation*}

  To reduce the number of cases we must examine, consider the
  following supremum over possible values $w_1,\hdots w_n, w_i'$ of
  the random vector $W$:
  \begin{align*}
    &\sup_{w_1,\hdots w_n, w_i'}\abs{\hat I_n(W) - \hat I_n(W^{(i)})}\\
    &\leq \sup_{w_1,\hdots w_n, w_i'}\left(
      \abs{\hat I_n(W) - \hat I_n(W^{i-})}\right.\\
    &\quad+ \left. \abs{\hat I_n(W^{i-}) - \hat I_n(W^{(i)})}\right)\\
    &\leq \sup_{w_1,\hdots w_n}\abs{\hat I_n(W) - \hat I_n(W^{i-})}\\
    &\quad+ \sup_{w_1,\hdots, w_{i-1}, w_i',w_{i+1},\hdots, w_n}
      \abs{\hat I_n(W^{i-}) - \hat I_n(W^{(i)})}\\
    &= 2\sup_{w_1,\hdots w_n}\abs{\hat I_n(W) - \hat I_n(W^{i-})}\\
    &= \frac{2}{n}\sup_{w_1,\hdots w_n}
      \sum_{j=1}^n\abs{\xi_j(W) - \xi_j(W^{i-})}.\\
  \end{align*}
  The penultimate step holds because $W \stackrel{d}{=} W^{(i)}$.
  
  We proceed by bounding $\abs{\xi_j(W) - \xi_j(W^{i-})}$ by looking
  at the individual cases.\\
  \noindent
  \textbf{Case 1:} $i=j$.\\
  Notice that if $0< a,b\leq n$ then
  \begin{align*}
    &\abs{\psi(a) - \log(b)} \leq \abs{\psi(a) - \log(a)} +
      \abs{\log(b) - \log(b)}\\
    &\leq \frac{1}{b} + \log(\max\curly{a,b}) \leq \log n + 1.
  \end{align*}
  Using this,
  \begin{align*}
    &\abs{\xi_j(W) - \xi_j(W^{i-})}\\
    &\leq \abs{\psi(k) - \log(\tilde k_j')}
      + \abs{\psi(n_{XZ,j}) - \log(n_{XZ,j}')}\\
    &\quad + \abs{\psi(n_{YZ,j}) - \log(n_{YZ,j}')}
      + \abs{\psi(n_{Z,j}) - \log(n_{Z,j}')}\\
    &\leq 4\log n + 4.
  \end{align*}
  In the summation from $j=1$ to $n$, this can only happen one times,
  so we have that $\sum_{j=1}^n \abs{\xi_j(W) - \xi_j(W^{i-})}\leq
  4\log n + 4$. 
  
  \noindent
  \textbf{Case 2:} $i\neq j, \tilde k_j > k$.\\
  Recall that $\xi_j(W) = \log(\tilde k_j) - \log(n_{XZ,j}) -
  \log(n_{YZ,j}) + \log(n_{Z,j})$ and that
  $\rho_{k,j}$ is the $\ell_\infty$-distance from $W_j$ to its
  $k$NN.
  Removing $W_i$ from $W$ will only change $\xi_j(W)$ if $W_i$ is
  counted in $\tilde k_j, n_{XZ,j}, n_{YZ,j}$, or
  $n_{Z,j}$.
  Because $\tilde k_j > k$, there must be at least two points whose
  distance to $W_j$ is exactly $\rho_{k,j}$, so removing one
  point cannot change $\rho_{k,j}$, regardless of its location
  with respect to $W_j$.
  Because $\rho_{k,j}$ will remain unchanged after removing
  $W_i$ from $W$, $\tilde k_j, n_{XZ,j}, n_{YZ,j}$, or
  $n_{Z,j}$ can each only decrease by a count of one.
  Under $\xi_j(W^{i-})$, if $\tilde k_j = k$, then the $log$ function
  will become $\psi$. 
  In general, we have that $\psi(w)-\psi(w-1) = \frac{1}{w-1}$,
  $\log(w) - \log(w-1) = \log\paren{\frac{w}{w-1}} \leq \frac{1}{w-1}$
  and, $\log(w) - \psi(w-1) = \log(w) - \psi(w) +\frac{1}{w-1}\leq \frac{2}{w-1}$.
  Regardless, we have
  \begin{align*}
    &\abs{\xi_j(W) - \xi_j(W^{i-})}\\
    &\leq \abs{\log(\tilde k_j) - \psi(\tilde k_j-1)}\\
    &\quad+\abs{\log(n_{XZ,j}) - \psi(n_{XZ,j}-1)}\\
    &\quad+ \abs{\log(n_{YZ,j}) - \psi(n_{YZ,j}-1)}\\
    &\quad+ \abs{\log(n_{Z,j}) - \psi(n_{Z,j}-1)}\\
    &\leq \frac{2}{\tilde k_j - 1}
      +\frac{2}{n_{XZ,j} - 1}\\
    &\quad+\frac{2}{n_{YZ,j} - 1}
      + \frac{2}{n_{Z,j} - 1}.
  \end{align*}

  Now, rather than considering the number of points that can change
  with the removal of $W_i$, we focus on the number of counts,
  $\tilde k_j, n_{XZ,j}, n_{YZ,j}$, and 
  $n_{Z,j}$, that will change.
  If $W_i$ is among the $\tilde k_j$NN of $W_j$, then its removal can
  change at most the $\tilde k_j$ points within a distance of
  $\rho_{k,j}$ in all coordinates.
  If $W_i$ is not among the $\tilde k_j$NN of $W_j$ but is counted in
  $n_{XZ,j}$, (and possibly in $n_{Z,j}$ too), then 
  its removal will not affect $\tilde k_j$ or $n_{YZ,j}$ and
  will only change $n_{XZ,j}$, (and $n_{Z,j}$) for the
  points within a distance of $\rho_{k,j}$ from $W_j$ in the
  $XZ$ coordinates, which is $n_{XZ,j}$.
  Similarly, $n_{YZ,j}$ and $n_{Z,j}$ will change for
  at most $n_{YZ,j}$ and $n_{Z,j}$ points, respectively.
  So, we have
  \begin{align*}
    &\sum_{j=1}^n\abs{\xi_j(W) - \xi_j(W^{(i)})}\\
    &\leq \sum_{j=1}^n \frac{2}{\tilde k_j - 1}
      +\sum_{j=1}^n\frac{2}{n_{XZ,j} - 1}\\
    &\quad+\sum_{j=1}^n\frac{2}{n_{YZ,j} - 1}
      + \sum_{j=1}^n\frac{2}{n_{Z,j} - 1}\\
    &\leq \frac{2\tilde k_j}{\tilde k_j - 1}
      +\frac{2n_{XZ,j}}{n_{XZ,j} - 1}\\
    &\quad+\frac{2n_{YZ,j}}{n_{YZ,j} - 1}
      + \frac{2n_{Z,j}}{n_{Z,j} - 1}\\
    &\leq 16
  \end{align*}

  \noindent
  \textbf{Case 3:} $i\neq j, \tilde k_j = k$.\\
  Again, removing $W_i$ from $W$ will change $\xi_j(W)$ only if
  $W_i$ is counted in at least one of $\tilde k_j, n_{XZ,j},
  n_{YZ,j}$, or $n_{Z,j}$.
  If $W_i$ is within the $k$NN of $W_j$, then removing $W_i$ will
  change the value of $\rho_{k,j}$.
  Because $\rho_{k,j}$ is different, we cannot say how
  $n_{XZ,j}, n_{YZ,j}$, or $n_{Z,j}$ will change
  so we give the loosest bound from case 1:
  \begin{equation*}
    \abs{\xi_j(W) - \xi_j(W^{i-})}
    \leq 4\log n + 4.
  \end{equation*}

  Using the first part of \cite[Lemma C.1]{gao2017estimating},
  if $U_i', U_1, \hdots, U_n$ are vectors in $\R^d$ and
  $\mathbf{U} = \curly{U_1,\hdots, U_{j-1},U_i', U_{j+1},\hdots, U_n}$,
  then 
  \begin{equation*}
    \sum_{j=1}^n I_{\curly{\text{$U_i'$ is in the $k$NN of 
          $U_j$ in $\mathbf{U}$}}}\leq k\gamma_d
  \end{equation*}
  where $\gamma_d$ is a constant that only depends on the dimension of
  the $XYZ$ space \cite[Corollary 6.1]{gyorfi2006distributionfree}.
  With this, we have
  \begin{equation*}
    \sum_{i=1}^n \brac{\xi_j(W) - \xi_j(W^{i-})} \leq k\gamma_d(4\log
    n + 4). 
  \end{equation*}

  If $W_i$ is not within the $k$NN of $W_j$, it can still contribute
  to the count of $n_{XZ,j}, n_{YZ,j}$, or
  $n_{Z,j}$.
  In this case $\rho_{k,j}$ will not change, so removing one
  point will decrease $n_{XZ,j}, n_{YZ,j}$, or
  $n_{Z,j}$ by at most one, similar to case 2.
  \begin{align*}
    &\abs{\xi_j(W) - \xi_j(W^{(i)})}\\
    &\leq \abs{\psi(k) - \psi(k)}\\
    &\quad+\abs{\psi(n_{XZ,j}) - \psi(n_{XZ,j}-1)}\\
    &\quad+ \abs{\psi(n_{YZ,j}) - \psi(n_{YZ,j}-1)}\\
    &\quad+ \abs{\psi(n_{Z,j}) - \psi(n_{Z,j}-1)}\\
    &= \frac{1}{n_{XZ,j} - 1} +\frac{1}{n_{YZ,j} - 1}\\
     &\quad + \frac{1}{n_{Z,j} - 1}.
  \end{align*}

  Using the second part of \cite[Lemma C.1]{gao2017estimating},
  if $U_i', U_1, \hdots, U_n$ are vectors in $\R^d$ and
  $\mathbf{U} = \curly{U_1,\hdots, U_{j-1},U_i', U_{j+1},\hdots, U_n}$,
  then 
  \begin{equation*}
    \sum_{j=1}^n\frac{1}{k_i}I_{\curly{\text{$U_i'$ is in the $k_i$NN of 
          $U_j$ in $\mathbf{U}$}}}\leq \gamma_d(\log n + 1).
  \end{equation*}
  Then
  \begin{align*}
    &\sum_{j=1}^n\abs{\xi_j(W) - \xi_j(W^{i-})}\\
    &\leq \sum_{j=1}^n\frac{1}{n_{XZ,j} - 1}
    +\sum_{j=1}^n\frac{1}{n_{YZ,j} - 1}\\
      &\quad+ \sum_{j=1}^n\frac{1}{n_{Z,j} - 1}\\
    &\leq \sum_{j=1}^n\frac{1}{n_{XZ,j}}
    +\sum_{j=1}^n\frac{1}{n_{YZ,j}}\\
    &\quad+ \sum_{j=1}^n\frac{1}{n_{Z,j}} + 3\\
    &\leq (\gamma_{d_{XZ}})(\log n + 1) + \gamma_{d_{YZ}}(\log n +
      1)\\ 
      &\quad + \gamma_{d_{Z}}(\log n + 1) + 3\\
    &\leq \gamma_{d}(\log n + 1) + 3
  \end{align*}
  where $d_{XZ}$ is the dimension of $XZ$, etc. 

  Combining all of these cases, we have
  \begin{align*}
    &\sum_{j=1}^n\abs{\xi_j(W) - \xi_j(W^{(i)})}\\
    &\leq (4\log n + 4) + 16 + k\gamma_d (4\log n + 4)\\
    &\quad+ \gamma_{d}(\log n + 1) + 3\\
    &\leq 36k\gamma_d\log n
  \end{align*}
  for $n\geq 2,k\geq 1$ (and $d\geq 3$ so $\gamma_d\geq 3$).
  Using Stein-Efron inequality,
  \begin{align*}
    \var
    &\paren{\hat I_n(W)}\\
    &\leq \frac{1}{2}\sum_{i=1}^n \E\brac{\hat I_n(W) - \hat
      I_n(W^{(i)})}^2\\
    &= \frac{1}{2}\sum_{i=1}^n \E\abs{\frac{1}{n}\sum_{j=1}^n \xi_j(W) -
      \frac{1}{n}\sum_{j=1}^n \xi_j(W^{(i)})}^2\\
    &\leq \frac{1}{2n^2}\sum_{i=1}^n \E\brac{\sum_{j=1}^n 
      \abs{\xi_j(W) - \xi_j(W^{(i)})}}^2\\
    &\leq\frac{1}{2n^2}\sum_{i=1}^n \E\brac{\sum_{j=1}^n 
      \sup_{W}\abs{\xi_j(W) - \xi_j(W^{i-})}}^2\\
    &\leq\frac{1}{2n^2}\sum_{i=1}^n \E\brac{ 
      36k\gamma_d\log n}^2\\
    &= \frac{648k^2\gamma_d^2(\log n)^2}{n}\\
    &\rightarrow 0.
  \end{align*}
  The last step uses l'Hospital's rule twice.
\end{proof}

\section{Proof of Corollary \ref{conv}}\label{convProof}
\begin{proof}
  From the proof in theorem \ref{variance}, it is easy to verify that
  \begin{align*}
    \sup_{w_1,\hdots w_n, w_i'}&\abs{\hat I_n(W) - \hat I_n(W^{(i)})}\\
    &\leq \frac{2}{n}\sup_{w_1,\hdots w_n}
      \sum_{j=1}^n\abs{\xi_j(W) - \xi_j(W^{i-})}\\
    &\leq \frac{72k\gamma_d\log n}{n}.
  \end{align*}
  So, $\hat I_n$ satisfies the bounded difference property.
  Using the bounded difference inequality
  (\cite[Theorem 6.2]{boucheron2013concentration}) with
  \[v = \frac{1296k^2(\log n)^2}{n},\]
  we bound the one-sided probability by $\exp\curly{-t^2/(2v)}$ and
  simply multiply this value by a factor of 2.
\end{proof}

\section{Proof of Theorem \ref{hiDim}}\label{hiDimProof}
\begin{proof}
  Let $(x,y,z)$ be an arbitrary point in the domain of $(X,Y,Z)$.
  Choose $r\geq 0$ if $(x,y,z)$ is a discrete point and $r>0$ if
  $(x,y,x)$ is a continuous point.
  Recall that we define $P_Z(r)\equiv P_Z(B(z,r))$.
  Proceeding by contradiction, assume that
  $\lim_{d\rightarrow\infty}P_{Z}(r)>0$; that is, there exists a
  $\delta>0$   such that for every $D >0$, there is a $d \geq D$ such
  that $P_{Z}(r)>\delta$.
  $B(z,r)$ is a $d$-dimensional, $\ell_\infty$-ball so it can be
  written as the product of $d$ sets.
  Defining $Z^k\equiv (Z_{k-1},\cdots, Z_1)$ for $k=1,2,\dots,d$.
  $P_{Z_k|Z^k}(r) \equiv \P(Z_k\in \pi_k(B(z,r))| Z^k\in\pi^k(B(z,r)))$
  where $\pi_k$ is the projection on to the $k$th coordinate and
  $\pi^k$ is the projection on to the $k-1,\dots,1$ coordinates.
  Then we have that
  \begin{equation*}
    \prod_{k=1}^d P_{Z_k|Z^k}(r) = P_{Z}(r)> \delta.
  \end{equation*}
  Then
  \[\lim_{d\rightarrow\infty}\sum_{k=1}^d \log P_{Z_k|Z^k}(r) > \log
    \delta > -\infty.\]
  For each $k$, $\log P_{Z_k|Z^k}(r)\leq 0$ so 
  $\log P_{Z_k|Z^k}(r)\rightarrow 0$ as $d\rightarrow\infty$ using the
  fact that $a_i\geq 0, \sum_{i=1}^\infty a_i < M$ for some
  $M\Rightarrow a_i\rightarrow 0$.

  Choose $\epsilon>0$ and 
  let $\mathcal{Q}$ be a finite partition of the domain of $Z$ into
  sets with positive measure in $P_Z$.
  Because $z$ and $r$ were chosen arbitrarily in the previous part,
  then for each $Q\in\mathcal{Q}$, there is a point $z_Q$ in the
  domain of $Z$ and distance $r_Q$ such that
  $B(z_Q,r_Q)\subseteq Q$.
  Then there must be a $d_Q$ such that for every $k\geq d_Q$, $-\log
  P_{Z_k|Z^K}(B(z_Q,r_Q))\leq \frac{\epsilon}{\norm{\mathcal{Q}}}$
  because $\log P_{Z_k|Z^K}(B(z_Q,r_Q))\rightarrow 0$ for each $Q$.
  Choosing $k\geq \max_{Q\in\mathcal{Q}}d_Q$, we have that 
  \begin{align*}
    &\sum_{Q\in\mathcal{Q}} -P_{Z_kZ^k}(Q)\log P_{Z_k|Z^k}(Q)\\
    &\leq \sum_{Q\in\mathcal{Q}} -\log P_{Z_k|Z^k}(Q)\\
    &\leq \sum_{Q\in\mathcal{Q}} -\log  P_{Z_k|Z^K}(B(z_Q,r_Q))\\
    &\leq \sum_{Q\in\mathcal{Q}}\frac{\epsilon}{\norm{\mathcal{Q}}}
      = \epsilon.
  \end{align*}

  Let $\curly{\mathcal{Q}_l: l=1,2,\dots}$ be a sequence of
  increasingly fine partitions of the domain of $Z$ into
  sets with positive measure in $P_Z$.
  Using \cite[Lemma 7.18]{gray2011entropy},
  we have that
  \[H(Z_k|Z^k) = \lim_{l\rightarrow\infty}\sum_{Q\in\mathcal{Q}_l}
    -P_{Z_kZ^k}(Q)\log P_{Z_k|Z^k}(Q)\leq \epsilon.\]
  Using Ces\`aro's lemma ($a_i\rightarrow a\Rightarrow
  \frac{1}{n}\sum_{i=1}^na_i\rightarrow a$),
  \[\lim_{d\rightarrow\infty}\frac{1}{d}H(Z) =
    \lim_{d\rightarrow\infty} H(Z_d|Z^d)\leq \epsilon.\]
  But, $\epsilon$ was chosen arbitrarily, so
  \[\lim_{d\rightarrow\infty}\frac{1}{d}H(Z) = 0,\]
  a contradiction.
  Thus, $\lim_{d\rightarrow\infty}P_Z(r)=0$ for all $z$ in the domain
  of $Z$.

  Again, by contradiction, assume that
  $P_Z(\rho_k) \xrightarrow{P} 1$ as $d\rightarrow\infty$.
  Then
  \begin{align*}
    \sum_{d=1}^\infty \log P_{Z_d|Z^d}(\rho_k)
    &=\log\paren{\prod_{l=d}^\infty P_{Z_d|Z^d}(\rho_k)}\\
    &=\log\paren{P_Z(\rho_k)} \xrightarrow{P} 0
  \end{align*}
  using the continuous mapping theorem.
  But, the sum of non-positive values can converge to zero only
  if $\log P_{Z_d|Z^d}(\rho_k)=0$ for each $d$ with probability one. 
  Then $P_{Z}(\rho_k) = 1$ for each finite $d$.
  
  Fix $d$.
  For $P_{Z}(\rho_k) = P_Z(B(z,\rho_k)) = 1$, $B(z,\rho_k)$ must
  include the support of $Z$.
  Then $k$NN (in the $XYZ$ space) must be on a boundary of the
  domain of $Z$ and $\rho_k$, the $\ell_\infty$, $k$NN distance in
  $XYZ$, must be at least half of diameter of the domain of $Z$ with
  probability one.
  Because all observations are independent of each other and
  identically distributed, all $Z$-coordinates within the sample must
  also be on the boundary of the domain of $Z$ with probability one.
  If $Z$ were continuous, then the boundary would have measure zero,
  indicating that each coordinate of $Z$ must be discrete.
  Note that $Z$ coordinates need not be binary if using a
  discrete scalar distance metric for non-numeric, categorical variables.
  If the support of $Z$ contains more than one point, then ties are
  possible with positive probability, and $\rho_k=0$ with positive
  probability and $P_Z(B(z,\rho_k))<1$.
  Then $Z$ must have support on one point, again contradicting a
  non-zero entropy rate for $Z$.
  This indicates that $\lim_{d\rightarrow\infty}P_Z(\rho_k) < 1$.
  
  Using this fact, there must be an $r$ such that for each $d\geq 1$, 
  $P_Z(\rho_k)\leq P_Z(r)<1$, so that $P_Z(\rho_k)\leq
  P_Z(r)\rightarrow 0$ as $d\rightarrow\infty$.

  Finally, because $P_Z(\rho_k)\geq P_{XYZ}(\rho_k)$, we must have
  \[\frac{P_Z(\rho_k) -
      P_{XYZ}(\rho_k)}{1-P_{XYZ}(\rho_k)}\xrightarrow{P} 0\]
  as $d\rightarrow\infty$.
  Recall that $n_{Z}-k$ has a binomial distribution with the
  probability parameter stated above which converges to zero.
  From here, it is easy to see that
  $n_Z\xrightarrow{D} k$ (converges in distribution) as
  $d\rightarrow\infty$.
  Because $n_Z$ is converging to a constant, we also have
  $n_Z\xrightarrow{P} k$.
  But, $k\leq \tilde k, n_{XZ},n_{YZ}\leq n_Z$, so $\tilde k,
  n_{XZ},n_{YZ}\xrightarrow{P} k$ as well.
  By the continuous mapping theorem, for each sample point,
  \[\xi_i = \psi(k) - \psi(n_{XZ}) -
    \psi(n_{YZ})+\psi(n_Z)\xrightarrow{P} 0\]
  so that
  \[\hat I_{\text{prop}}(X;Y|Z) =
    \frac{1}{n}\sum_{i=1}^n\xi_i\xrightarrow{P} 0.\] 
\end{proof}

\section{Auxiliary Lemmas}\label{lemmas}

\begin{prop}\label{discFin}
Let $(X,2^X,\mu)$ be a discrete measure space with $\mu(X) = C<\infty$.
Then for every $\epsilon > 0$, there exists a finite set $E$ such that
$\mu(X\backslash E) < \epsilon$ and each point in $E$ has non-zero
measure.
\end{prop}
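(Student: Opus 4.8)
The plan is to use the fact that a finite, purely discrete measure is concentrated on an at most countable set of atoms whose masses form an absolutely convergent series; the set $E$ is then just a sufficiently large finite ``head'' of an enumeration of those atoms.

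First I would set $A = \curly{x \in X : \mu(\curly{x}) > 0}$, the set of atoms of $\mu$. To see that $A$ is countable, for each $n \in \N$ let $A_n = \curly{x \in X : \mu(\curly{x}) \geq C/n}$. If $A_n$ had more than $n$ elements, a subset of $n+1$ of them would, by finite additivity and monotonicity, have measure strictly greater than $C = \mu(X)$, which is impossible; hence each $A_n$ is finite and $A = \bigcup_{n \in \N} A_n$ is countable. If $A$ is finite one may simply take $E = A$: then $\mu(X \backslash A) = 0 < \epsilon$ since $\mu$ is discrete, and every point of $E = A$ has positive mass, so we are done. Otherwise enumerate $A = \curly{x_1, x_2, \dots}$ and write $p_i = \mu(\curly{x_i}) > 0$.

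Next I would observe that, because $\mu$ is discrete, $\mu(X \backslash A) = 0$, so by countable additivity $\sum_{i=1}^{\infty} p_i = \mu(A) = \mu(X) = C < \infty$. The tail of a convergent series of nonnegative terms vanishes, so there is an $N$ with $\sum_{i > N} p_i < \epsilon$. Put $E = \curly{x_1, \dots, x_N}$. Then $E$ is finite, each of its points has nonzero measure by the definition of $A$, and $\mu(X \backslash E) = \mu(X \backslash A) + \sum_{i > N} p_i = \sum_{i > N} p_i < \epsilon$, which is exactly what is required.

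There is no serious obstacle here; the only points needing a little care are (i) the pigeonhole argument showing each $A_n$, and hence $A$, is countable, and (ii) invoking the hypothesis that $\mu$ is a \emph{discrete} measure (so that $\mu(B) = \sum_{x \in B} \mu(\curly{x})$ for every $B \in 2^X$) in order to conclude $\mu(X \backslash A) = 0$ and thereby identify $C$ with the total atomic mass $\sum_i p_i$.
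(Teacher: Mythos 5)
Your proof is correct and follows essentially the same route as the paper: enumerate the atoms, note their masses sum to $C$, and take a finite head of the enumeration whose tail mass is below $\epsilon$. The only difference is that you prove countability of the atom set via a pigeonhole argument, whereas the paper takes countability directly from the definition of a discrete measure space; this is a harmless (and slightly more self-contained) addition.
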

\begin{proof}
  If $X$ is finite, the problem is trivial.
  Assume $X$ in infinite.
  Without loss of generality, remove any zero-measure points from $X$.
  Because $(X,2^X,\mu)$ is discrete, $X$ must be countable so we number each
  point in $X$.
  We must have that $\sum_{i=1}^\infty \mu(x_i) = C$.
  Then there must be a positive integer, $N$, such that for each
  $n\geq N$, $C - \sum_{i=1}^n \mu(x_i) < \epsilon$.
  Let $E = \curly{x_i: 1\leq i\leq N}$.
  Then $\mu(X\backslash E) = \mu(X) - \mu(E) = C - \sum_{i=1}^N
  \mu(x_i)<\epsilon$.
\end{proof}

\begin{prop}\label{invBinom}
  Assume $W_n\sim\text{Binomial}(n,p)$, then
  \begin{equation}\E\brac{\frac{1}{W_n+1}} =
    \frac{1-(1-p)^{n+1}}{(n+1)p}\leq\frac{1}{np}
  \end{equation}
\end{prop}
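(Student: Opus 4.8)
The plan is to compute the expectation directly from the binomial pmf and collapse the resulting sum into a closed form via a standard binomial-coefficient identity, then bound the closed form crudely. There is no real obstacle here; the only thing to be careful about is reindexing the sum correctly.

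\textbf{Key steps.} First I would write
\begin{equation*}
  \E\brac{\frac{1}{W_n+1}} = \sum_{j=0}^n \frac{1}{j+1}\binom{n}{j}p^j(1-p)^{n-j}.
\end{equation*}
Next I would apply the identity $\frac{1}{j+1}\binom{n}{j} = \frac{1}{n+1}\binom{n+1}{j+1}$, which follows by writing out the factorials, to rewrite the sum as $\frac{1}{n+1}\sum_{j=0}^n \binom{n+1}{j+1}p^j(1-p)^{n-j}$. Then I would substitute $i = j+1$ and pull out a factor of $p^{-1}$ to obtain $\frac{1}{(n+1)p}\sum_{i=1}^{n+1}\binom{n+1}{i}p^i(1-p)^{n+1-i}$. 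The sum is the binomial expansion of $(p + (1-p))^{n+1}$ missing only its $i=0$ term, so it equals $1 - (1-p)^{n+1}$, giving the claimed equality $\E\brac{1/(W_n+1)} = \frac{1-(1-p)^{n+1}}{(n+1)p}$.

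\textbf{Finishing the inequality.} For the bound, I would simply note $1 - (1-p)^{n+1} \le 1$ and $n+1 \ge n$, so $\frac{1-(1-p)^{n+1}}{(n+1)p} \le \frac{1}{(n+1)p} \le \frac{1}{np}$. The main ``obstacle'', such as it is, is just bookkeeping in the reindexing; everything else is immediate.
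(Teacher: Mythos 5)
Your proposal is correct and follows essentially the same route as the paper: expand the expectation over the binomial pmf, use the identity $\frac{1}{j+1}\binom{n}{j}=\frac{1}{n+1}\binom{n+1}{j+1}$, reindex to recognize $1-(1-p)^{n+1}$ as a Binomial$(n+1,p)$ tail, and divide by $(n+1)p$. Your explicit handling of the final inequality via $1-(1-p)^{n+1}\leq 1$ and $n+1\geq n$ is the obvious step the paper leaves implicit, so nothing is missing.
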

\begin{proof}
  \begin{align*}
    \E\brac{\frac{1}{W_n+1}}
    &= \sum_{m=0}^n\frac{1}{m+1}\binom{n}{m}p^m(1-p)^{m-n}\\
    &= \frac{1}{(n+1)p}\sum_{m=0}^n\binom{n+1}{m+1}p^{m+1}(1-p)^{n-m}\\
    &=\frac{1}{(n+1)p}\sum_{m=1}^n\binom{n+1}{m}p^{m}(1-p)^{n+1-m}\\
    &= \frac{1}{(n+1)p} \brac{1 - \P(X_{n+1} = 0)}\\
    &= \frac{1 - (1-p)^{n+1}}{(n+1)p}
  \end{align*}
\end{proof}

\begin{prop}\label{absExpLog}
  Let $W\sim\text{Binomial}(n,p)$ then
  \begin{equation}
    \abs{\E\brac{\log\paren{\frac{W+k}{np+k}}}}\leq\frac{1}{np+k}
  \end{equation}
\end{prop}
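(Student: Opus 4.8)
\section*{Proof proposal}

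The plan is: (i) use Jensen's inequality to fix the sign of the expectation, (ii) rewrite its absolute value as an exact ``second--order Jensen gap'' integral, which makes transparent that the quantity has size of order $\var(W)/(np+k)^2$, hence is $\lesssim \frac{1}{2(np+k)}$, and (iii) make this rigorous by separating the contribution of $W$ near its mean (a variance bound) from a tail contribution (a Chernoff bound).

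Write $\mu = np+k$ and $Y = W+k$, so $\E Y = \mu$ and $Y\ge k$. Concavity of $\log$ and Jensen give $\E[\log(Y/\mu)]\le\log\E[Y/\mu]=0$, so $\abs{\E[\log\frac{W+k}{np+k}]} = \log\mu - \E[\log Y]$. Now interpolate linearly: for $\theta\in[0,1]$ put $Y_\theta=(1-\theta)\mu+\theta Y=\mu+\theta(W-np)$ and apply Taylor's theorem with integral remainder to $g(\theta):=\E[\log Y_\theta]$. Since $g(0)=\log\mu$, $g(1)=\E[\log Y]$, $g'(0)=\E[(W-np)/\mu]=0$ (this is where the first--order term cancels), and $g''(\theta)=-\E[(W-np)^2/Y_\theta^{\,2}]$, we obtain the identity
\[
  \abs{\E\brac{\log\tfrac{W+k}{np+k}}} \;=\; g(0)-g(1) \;=\; \int_0^1 (1-\theta)\,\E\brac{\frac{(W-np)^2}{Y_\theta^{\,2}}}\,d\theta .
\]
When $W$ is near $np$ one has $Y_\theta$ comparable to $\mu$, so the integrand is typically of size $\var(W)/\mu^2=np(1-p)/\mu^2\le np/\mu^2\le 1/\mu$; thus the bound already holds ``on average'', and the remaining work is to control the event that $W$ is far below its mean.

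Fix a small $\eta\in(0,1-1/\sqrt 2)$ and split each inner expectation over $\curly{W\ge(1-\eta)np}$ and its complement. On the bulk event, $Y_\theta=\mu-\theta(np-W)\ge\mu-\theta\eta\,np\ge(1-\eta)\mu$ (using $np\le\mu$), so the bulk part of the integral is at most $\frac12\cdot\frac{\var(W)}{(1-\eta)^2\mu^2}\le\frac{1}{2(1-\eta)^2\mu}$. On the tail event, bound $(W-np)^2\le(np)^2$ and $Y_\theta\ge(1-\theta)\mu+\theta k$; computing $\int_0^1\frac{(1-\theta)\,d\theta}{((1-\theta)\mu+\theta k)^2}=\frac{1}{(np)^2}\paren{\log\frac\mu k+\frac k\mu-1}\le\frac{1}{\mu k}$ (the last step from $\log x\le x-1$ at $x=\mu/k$) and using the Chernoff bound $\P(W<(1-\eta)np)\le e^{-\eta^2np/2}$, the tail part is at most $\frac{(np)^2}{\mu k}\,e^{-\eta^2np/2}$. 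Adding the two contributions, it remains to choose $\eta$ so that the sum is at most $\frac1\mu$.

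I expect the delicate point to be exactly this last step --- getting the final constant down to $1$. The bulk already consumes about $\frac1{2\mu}$, leaving only a constant multiple of $\frac1\mu$ for the tail, and the crude tail estimate above is comfortably small only when $np\lesssim k$; when $np\gg k$ the prefactor $(np)^2/(\mu k)$ in front of the exponential must be tamed. A clean way to organize this is to treat $np\le k$ and $np>k$ separately. For $np\le k$ one can skip the integral identity altogether: from $\log x\ge 1-1/x$ we get $\abs{\E[\log\frac{W+k}{np+k}]}\le\mu\,\E[\tfrac1{W+k}]-1$, and writing $\tfrac1{W+k}=\tfrac1\mu\cdot\tfrac1{1+\delta}$ with $\delta=(W-np)/\mu$, $\E\delta=0$, $1+\delta\ge k/\mu$, gives $\E[\tfrac1{W+k}]=\tfrac1\mu(1+\E[\tfrac{\delta^2}{1+\delta}])\le\tfrac1\mu+\tfrac{np}{k\mu^2}$ (exactly the style of Proposition~\ref{invBinom}), whence $\abs{\E[\log\frac{W+k}{np+k}]}\le\frac{np}{k\mu}\le\frac1\mu$. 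For $np>k$ one has $\mu<2np$, so the tail decays exponentially in $\mu$; refining the tail estimate by partitioning it into shells $\curly{(1-(j{+}1)\eta)np\le W<(1-j\eta)np}$, on which one may use the sharper lower bound $Y_\theta\ge(1-\theta)\mu+\theta\paren{(1-(j{+}1)\eta)np+k}$, the tail is then dominated by the slack $\bigl(1-\tfrac{1}{2(1-\eta)^2}\bigr)\tfrac1\mu$ left by the bulk estimate.
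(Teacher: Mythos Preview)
Your starting point---expand $\log$ to second order about $\mu=np+k$ so that the first-order term vanishes by $\E[W-np]=0$---is exactly the paper's. The difference is only in how the second-order remainder is controlled, and the paper's route is considerably shorter.

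Rather than the integral remainder, the paper uses the Lagrange form: for each realization there is a $c$ between $W+k$ and $\mu$ with $\log\tfrac{W+k}{\mu}=\tfrac{W-np}{\mu}-\tfrac{(W-np)^2}{2c^2}$, so after taking expectations the task reduces to bounding $\E\bigl[(W-np)^2/(2c^2)\bigr]$ with $c$ at one of the two endpoints. At $c=\mu$ this is $\var(W)/(2\mu^2)\le 1/(2\mu)$. At $c=W+k$ the paper splits, as you do, on $np\lessgtr k$: when $np\le k$ one simply uses $(W+k)^{-2}\le k^{-2}$; when $np\ge k$ (and $k\ge 2$) one uses $(j+k)^2\ge(j+1)(j+2)$ together with the binomial index-shift identity
\[
\frac{1}{(j+1)(j+2)}\binom{n}{j}p^j=\frac{1}{(n+1)(n+2)p^2}\binom{n+2}{j+2}p^{j+2},
\]
in precisely the spirit of Proposition~\ref{invBinom}, which turns $\E\bigl[(W-np)^2/(W+k)^2\bigr]$ into a second moment of a $\text{Binomial}(n+2,p)$ variable and gives $\le\tfrac{1}{2np}\le\tfrac{1}{\mu}$ directly.

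This sidesteps the bulk/tail split, the Chernoff bound, and the shell refinement that you sketch but do not carry out. Your integral-remainder identity and your $np\le k$ argument are correct, but for $np>k$ the delicate constant-chasing you anticipate is unnecessary: the same index-shift trick you already cite from Proposition~\ref{invBinom} (applied with a shift of two rather than one) bounds $\E\bigl[(W-np)^2/(W+k)^2\bigr]$ outright, and the Chernoff machinery can be dropped entirely.
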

\begin{proof}
  Using Taylor's theorem to expanding $\log(x)$ about $np+k$, there
  exists   $c\in[x,np+k]$ such that
  \begin{equation}\log(x) = \log(np+k) +\frac{x-np-k}{np+k} -
    \frac{(x-np-k)^2}{2c^2}.\end{equation}
  Plugging in $W+k$ for $x$ and aggregating the $\log$ terms,
  \begin{equation}\log\paren{\frac{W+k}{np+k}} = \frac{W - np}{np+k} -
    \frac{(W-np)^2}{2c^2}.\end{equation}
  Taking the expected value of both sides, the first-order term drops out,
  \begin{align*}
    \E\brac{\log\paren{\frac{W+k}{np+k}}}
      &= \E\brac{-\frac{(W - np)^2}{2c^2}}
  \end{align*}
  for some $c\in [np+k,W+k]$.
  Notice that $\E\brac{\log\paren{\frac{W+k}{np+k}}}\leq 0$ for all
  $c$, so that
  \begin{equation}\begin{split}
      \abs{\E\brac{\log\paren{\frac{W+k}{np+k}}}}
      \leq \E\brac{\max_{c\in[np+k,W+k]}
      \curly{\frac{(W - np)^2}{2c^2}}}.
  \end{split}\end{equation}
  Because $\frac{1}{2c^2}$ is monotonic, 
  $\frac{(W - np)^2}{2c^2}$ is optimized at the boundary values
  of $c=np+k$ and $c=W+k$.
  If $c = np+k$,
  \begin{align*}
    \E\brac{\frac{(W - np)^2}{2c^2}}
    &= \frac{np(1-p)}{2(np+k)^2}\\
    &\leq \frac{np+k}{2(np+k)^2}\\
    &= \frac{1}{2(np+k)}
  \end{align*}
  using $\E[(W-np)^2] = \text{Var}(W) = np(1-p)$.
  
  If $c=W+k$ and $k\leq np$, we use $\sum_{j=0}^n\binom{n+2}{j+2}p^{j+2}(1-p)^{n-j}
  = \P(V \geq 2)$ where $V\sim\text{Binomial}(n+2,p)$, so that
  \begin{align*}
    &\E\brac{\frac{(W - np)^2}{2(W+k)^2}}\\
    &= \frac{1}{2}\sum_{j=0}^n\frac{(j-np)^2}{(j+k)^2}
      \binom{n}{j}p^j(1-p)^{n-j}\\
    &\leq \frac{1}{2}\sum_{j=0}^n\frac{(j-np)^2}{(j+2)(j+1)}
      \binom{n}{j}p^j(1-p)^{n-j}\\
    &= \frac{1}{2}\sum_{j=0}^n\frac{(j-np)^2}{(n+2)(n+1)p^2}
      \binom{n+2}{j+2}p^{j+2}(1-p)^{n-j}\\
    &\leq \frac{\E\brac{(V-np)^2}}{2(n+2)(n+1)p^2}\\
    &= \frac{(n+2)p(1-p) + 4p^2}{2(n+2)(n+1)p^2}\\
    &\leq \frac{(n+2)p}{2(n+2)(n+1)p^2}\\
    &\leq \frac{1}{2np}\leq \frac{1}{np+k}
  \end{align*}
  for $n\geq4, k\geq2$ using $k\leq np$ in the last step.

  If $c = W+k\geq k$ and $np\leq k$,
  so 
  \begin{align*}
    \E\brac{\frac{(W - np)^2}{2c^2}}
    &\leq\E\brac{\frac{(W - np)^2}{2k^2}}\\
    &= \frac{np(1-p)}{2k^2}\\
    &\leq \frac{1}{2k}\leq \frac{1}{np+k}
  \end{align*}
  Putting this together,
  \begin{equation}
    \begin{split}
      &\abs{\E\brac{\log\paren{\frac{W+k}{np+k}}}}\\
      &\leq \max\curly{\abs{\E\brac{\frac{(W - np)^2}{2c^2}}},
        \abs{\E\brac{\frac{(W - np)^2}{2(W+k)^2}}}}\\
      &= \max\curly{\frac{1}{2(np+k)},\frac{1}{np+k}} = \frac{1}{np+k}
    \end{split}
  \end{equation}
\end{proof}

\begin{lem}\label{discBound}
  Assume $W_n-k\sim\text{Binomial}(n-k-1,p)$ and $k\geq\frac{p}{1-p}$.
  Then
  \begin{equation}
    \abs{\E\brac{\log(W_n)} - \log(np)}\leq \frac{1}{k} + \frac{k}{np}
  \end{equation}
  and
  \begin{equation}
    \abs{\E\brac{\psi(W_n)} - \log(np)}\leq \frac{2}{k} + \frac{k}{np}
  \end{equation}
\end{lem}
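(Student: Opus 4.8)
The plan is to reduce both estimates to Proposition~\ref{absExpLog} together with the elementary inequality $\abs{\log w - \psi(w)}\leq \frac{1}{w}$ for $w>0$. Writing $W_n = V + k$ with $V\sim\text{Binomial}(n-k-1,p)$, we have $\E\brac{\log W_n} = \E\brac{\log(V+k)}$, so applying Proposition~\ref{absExpLog} with $V$ in place of $W$ and $n-k-1$ in place of $n$ gives $\abs{\E\brac{\log W_n} - \log\paren{(n-k-1)p+k}}\leq \frac{1}{(n-k-1)p+k}$.

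Next I would compare the centering constant $(n-k-1)p+k$ with the target $np$. Expanding, $(n-k-1)p+k = np + k(1-p) - p$, and the hypothesis $k\geq\frac{p}{1-p}$ is precisely $k(1-p) - p\geq 0$; hence $(n-k-1)p+k\geq np$ and also $(n-k-1)p+k\geq k$. The second inequality gives $\frac{1}{(n-k-1)p+k}\leq\frac{1}{k}$, while $\log(1+x)\leq x$ for $x\geq 0$ gives
\[
0\leq\log\paren{(n-k-1)p+k} - \log(np) = \log\paren{1+\frac{k(1-p)-p}{np}}\leq\frac{k(1-p)-p}{np}\leq\frac{k}{np}.
\]
A triangle inequality then yields $\abs{\E\brac{\log W_n} - \log(np)}\leq\frac{1}{k}+\frac{k}{np}$, which is the first claimed bound.

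For the second bound, observe that $W_n = V+k\geq k$ deterministically, so $\abs{\log w - \psi(w)}\leq\frac{1}{w}$ gives $\abs{\E\brac{\psi(W_n)} - \E\brac{\log W_n}}\leq\E\brac{1/W_n}\leq\frac{1}{k}$. Adding this to the first bound via one more triangle inequality produces $\abs{\E\brac{\psi(W_n)} - \log(np)}\leq\frac{2}{k}+\frac{k}{np}$.

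The only point requiring care is that Proposition~\ref{absExpLog} is applied with sample size $n-k-1$ rather than $n$, and its proof carries mild side conditions (e.g.\ $n-k-1\geq 4$, $k\geq 2$); these hold automatically in the regime in which Lemma~\ref{discBound} is invoked in the proof of Theorem~\ref{unbiased}, where $k=k_n\to\infty$ and $k_n/n\to 0$. Everything else is routine algebra, driven entirely by the hypothesis $k\geq\frac{p}{1-p}$, which is exactly what forces the centering constant $(n-k-1)p+k$ to lie above both $np$ and $k$ simultaneously.
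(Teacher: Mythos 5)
Your proposal is correct and follows essentially the same route as the paper: split off the centering constant $(n-k-1)p+k$, control the first term with Proposition~\ref{absExpLog}, handle the shift to $\log(np)$ via $\log w\leq w-1$ together with the hypothesis $k\geq\frac{p}{1-p}$ (which gives $(n-k-1)p+k\geq np$ and $\geq k$), and then pass from $\log$ to $\psi$ using $\abs{\psi(w)-\log(w)}\leq\frac{1}{w}$ and $W_n\geq k$. Your remark about the side conditions in Proposition~\ref{absExpLog} being met in the regime where the lemma is invoked is a fair point of care that the paper leaves implicit, but it does not change the argument.
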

\begin{proof}
  Using the triangle inequality,
  \begin{align*}
    &\abs{\E\brac{\psi(W_n)} - \log(np)}\\
    &\leq \E\brac{\abs{\psi(W_n) - \log W_n}} +
      \abs{\E\brac{\log(W_n)} - \log(np)}.
  \end{align*}
  Using lemma \ref{absExpLog}, and the fact that $|\log(w)|\leq w-1$
  for $w>1$,
  we have that 
  \begin{align*}
    &\abs{\E\brac{\log(W_n)} - \log(np)}\\
    &\leq \abs{\E\brac{\log(W_n)} -\log((n-k-1)p+k)}\\
    &\quad + \abs{\log((n-k-1)p+k)- \log(np)}\\
    &= \abs{\E\brac{\log\paren{\frac{W_n}{(n-k-1)p+k}}}}\\
    &\quad + \log\paren{\frac{(n-k-1)p+k}{np}}\\
    &\leq \frac{1}{(n-k-1)p + k} + \frac{(n-k-1)p+k}{np}-1\\
    &\leq \frac{1}{k} + \frac{k}{np}.
  \end{align*}
  Because $k\geq\frac{p}{1-p}$, $(n-k-1)p+k\geq np$.

  Because $\abs{\psi(w)-\log(w)}<\frac{1}{w}$ for $w>0$ and $W_n\geq k$,
  $\E\brac{\abs{\psi(W_n) - \log W_n}} <\E\brac{\frac{1}{W_n}}\leq
  \frac{1}{k}$.
  So,
  \[\abs{\E\brac{\psi(W_n)} - \log(np)}\leq \frac{2}{k} + \frac{k}{np}.\]
\end{proof}

\begin{lem}\label{ktildelim}
  Let $V$ be a $d$-dimensional random variable on the probability
  space $(\mathcal{V},\mathcal{B}_{\mathcal{V}},P)$ with
  $\calV=\prod_{i\in I}\calV\subseteq\R^d$ where
  $I=\curly{1,\dots,d}$ and for nonempty $J\subseteq I$, let $P_J =
  P_{V_i:i\in J}$.
  Assume that the support of $P$ is $\calV$ and
  that for any nonempty $J\subseteq I$, the set
  \[D_J = \curly{w\in \prod_{i\in J}\calV_i: P_J(\{w\})>0}\]
  is countable and nowhere dense in $\prod_{i\in J}\calV_i$.
  Let $v_1,\dots,v_n\sim P$ be an independent sample in $\calV$, and
  for a point $v\in\calV$, define
  $\tilde k(v) = \abs{\curly{v_i:\norm{v-v_i}_\infty\leq \rho_v}}$
  where $\rho_v$ is the distance to the $k$th nearest neighbor to
  $v$ in the sample.
  If $\frac{k}{n}\rightarrow 0$, and $n\rightarrow\infty$ then
  \[\tilde k(V)\rightarrow k\text{ almost surely}\]
  given that
  $V\in C \equiv \curly{v\in \calV:P(\{v\}) = 0}$
\end{lem}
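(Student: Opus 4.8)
The plan is to fix a single ``continuous'' point $v$ (that is, $v\in C$, so $P(\{v\})=0$), realize the samples as the prefixes $v_1,\dots,v_n$ of one i.i.d.\ sequence $v_1,v_2,\dots\sim P$, and show that $\tilde k(v)=k$ for all sufficiently large $n$, almost surely; since this holds for every $v\in C$, Fubini over the law of $V$ restricted to $C$ yields the stated conclusion. Two facts are needed: (i) the $k$NN radius $\rho_v$ shrinks to $0$, and (ii) once it is small, no sample point is ever $\ell_\infty$-equidistant with the $k$th nearest neighbour of $v$.

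For (i), since $v$ lies in the support of $P$ we have $P_V(r)>0$ for every $r>0$, so the number of the first $n$ points lying in $\curly{u:\norm{u-v}_\infty\le r}$ is $\mathrm{Binomial}(n,P_V(r))$ with mean tending to infinity; the strong law (or a Chernoff bound and Borel--Cantelli) gives that it exceeds $k$ for all large $n$, hence $\rho_v\le r$ eventually, and letting $r\downarrow 0$ gives $\rho_v\downarrow 0$ a.s.\ (the sequence is monotone because the sample is nested). Moreover $P(\{v\})=0$ forces $\rho_v>0$ for all $n$, a.s. For (ii), note first that $\tilde k(v)>k$ occurs precisely when the $k$th and $(k+1)$th nearest neighbours of $v$ lie at the same $\ell_\infty$ distance. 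Conditioning on one point of a pair, the chance the other lands at a given distance $r$ is the jump of the law of $\norm{V_1-v}_\infty$ at $r$, and since $P(\norm{V_1-v}_\infty=r)\le\sum_{\ell=1}^d\brac{P(V_{1,\ell}=v_\ell-r)+P(V_{1,\ell}=v_\ell+r)}$ this is positive only when $r$ belongs to the countable set $T(v)=\bigcup_{\ell=1}^d\curly{\abs{a-v_\ell}:a\in D_{\{\ell\}},\ a\neq v_\ell}\subseteq(0,\infty)$. Hence, a.s., every equidistant pair in the whole sequence has common distance in $T(v)$, so $\tilde k(v)>k$ forces $\rho_v\in T(v)$ together with at least two sample points at exactly that distance. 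For $r\in T(v)$ let $E_r$ be the event that at some sample size $\rho_v$ equals $r$ while at least two of the then-current points lie at distance exactly $r$ from $v$; because the open ball of radius $r$ carries positive mass, $\rho_v$ can equal $r$ only during the finite initial window before the $k$th point interior to radius $r$ arrives, and counting within that window the ``sphere'' points (each of probability $\beta_r:=P(\norm{V_1-v}_\infty=r)$) against the ``interior'' points (each of probability $\alpha_r:=P_V(r^-)>0$) bounds $P(E_r)$ by a constant times $k^2(\beta_r/\alpha_r)^2$. One then shows $\sum_{r\in T(v)}P(E_r)<\infty$, so by Borel--Cantelli only finitely many $E_r$ occur; since by (i) $\rho_v$ crosses any fixed level in finitely many steps, each $E_r$ constrains only finitely many $n$, and therefore $\tilde k(v)=k$ for all large $n$.

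The step I expect to be the real obstacle is the last one: establishing $\sum_{r\in T(v)}P(E_r)<\infty$, i.e.\ showing that the atomic mass of the marginals $P_{\{\ell\}}$ cannot accumulate too heavily at small distances from $v$ relative to the bulk mass there. This is precisely where the hypothesis that each $D_J$ is countable \emph{and} nowhere dense is used, and it requires a bound on $\beta_r/\alpha_r$ that is uniform over the countably many bad radii; the remaining ingredients (the shrinking-radius estimate, the characterisation of ties via $T(v)$, and the Fubini reduction) are routine.
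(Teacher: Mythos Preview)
Your approach is genuinely different from the paper's, and the obstacle you flag is a real gap, not just a technicality.

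The paper does not enumerate tie-radii and sum. Instead it first extracts from the nowhere-dense hypothesis a \emph{single global scale}: for every $J\subseteq I$ it argues there is $\zeta_J>0$ with $\|a-a'\|_\infty\ge\zeta_J$ for all distinct $a,a'\in D_J$, and sets $\zeta=\tfrac13\min_J\zeta_J$. It then partitions $C$ geometrically. For each $v\in C$ with continuous-coordinate index set $J$, it finds $\delta_v>0$ so that the slab $B_J(v,\delta_v)$ (an $\ell_\infty$-box in the $J$-coordinates, fixed in the others) has $J$-projection disjoint from $D_J$; once $k$ sample points lie in this slab, all $k$ nearest neighbours are at distinct distances and $\tilde k(v)=k$. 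The set $W=\{v\in C:\delta_v\ge\zeta\}$ is handled by a Chernoff bound on $\rho_v$ plus Borel--Cantelli. The residual $C\setminus W$ is covered by disjoint balls $B(x_i,\zeta)$ centred at atom-projections $x_i$, and the paper uses the standard cone-covering fact that at most $k\gamma_d$ sample points can have a given $x_i$ inside their $k$NN ball, again finishing with Borel--Cantelli. So the paper localises in space around the atoms, whereas you localise in the radius variable.

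Your summability step cannot be completed as stated. Take $d=1$, a uniform density on $[0,1]$ mixed with atoms of mass $c\,2^{-m}$ at $2^{-m}$; $D_{\{1\}}$ is countable and nowhere dense. At $v=0$ one has $\beta_{2^{-m}}\asymp 2^{-m}$ while $\alpha_{2^{-m}}\asymp 2^{-m}$, so $\beta_r/\alpha_r$ is bounded away from $0$ and $\sum_r(\beta_r/\alpha_r)^2=\infty$. Of course $v=0$ is $P$-null, so this alone does not kill the Fubini reduction, but it shows that your quantitative bound on $P(E_r)$ is not summable in general and that you still owe an argument that, for $P$-a.e.\ $v\in C$, $\inf T(v)>0$ --- equivalently, that no coordinate $v_\ell$ lies in the accumulation set of $D_{\{\ell\}}$. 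Nowhere-density does not force that accumulation set to be $P_\ell$-null (it can be a fat-Cantor-type set carrying continuous mass), so the step you singled out is genuinely missing, not merely unwritten. If you want to salvage the route, you need either an extra hypothesis ruling out such accumulation, or a different estimate on $P(E_r)$ for small $r$ that does not rely on a uniform lower bound for $\alpha_r$.
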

\begin{proof}
  For each $J\subseteq I$, $D_J$ is countable, so we can index it
  with the positive integers.
  Using contradiction, assume that some ordering is a Cauchy sequence;
  that is, for every $\epsilon>0$, there is a positive integer $N$ such
  that for all integers $l,m\geq N$, $\norm{a_l - a_m}_\infty < \epsilon$.
  But, all Cauchy sequences converge in the complete metric space
  (\cite[Theorem 3.11]{rudin1987real}),
  $(\R^d, \ell_\infty)$, so for some $a$, $a_i\rightarrow a$ as
  $i\rightarrow\infty$, a contradiction since $D$ is nowhere dense in
  $W$.
  Thus, for each $J$, there is a $\zeta_J>0$ such that for any two points,
  $a_l,a_m\in D_J, \norm{a_l - a_m}_\infty \geq\zeta_J$.
  $I$ is finite, so
  $\zeta \equiv \frac{\min_{J\subseteq I}\curly{\zeta_J}}{3}$
  exists.
  
  In $(\R^d,\ell_\infty)$, if $\norm{a-b}_\infty = \norm{a-c}_\infty$,
  then there must be at least one coordinate, $i$, such that
  $a(i)-b(i) = a(i)-c(i) \equiv r$ (where the vectors are function
  mapping the coordinate(s) to its coordinate value(s))
  and for all other coordinates, $j\neq i$, $a(j)-b(j), a(j)-c(j) \leq r$.
  This can only happen when $a(i),b(i),c(i)\in D_i$; they have a positive
  point mass so ties are possible.
  Consider a case where there are discrete points, $P_i(\{w(i)\})>0$
  for some coordinates, $i\in J$, but will not have any ties in distance.
  Suppose $A = \curly{(a(i):i\in I)}$ is a subset in the support of
  $P$ with positive measure
  such that the marginal distribution on $A$ is discrete for the
  coordinates in $J$ and continuous for coordinate in $I\backslash J$;
  that is, $P_i(\{a(i)\})>0$ when $i\in J$ and $P_i(\{a(i)\})=0$ when
  $i\in I\backslash J$. 
  Assume that for some point $(b(i): i\in J)$ in a subspace of A,
  $P_J((b_i: i\in J))>0$,
  then the subset of $A$ restricted to equal $(b(i): i\in J)$ on $J$,
  \[B\equiv \curly{(a(i):i\in I)\in A:a(i)=b(j), j\in J},\]
  also has a positive probability.
  If the random sample has values $v_m,v_l\in B$ and another arbitrary
  point $b\in B$, then
  \[\P\paren{\norm{v_m-b}_\infty = \norm{v_l-b}_\infty} = 0.\]
  This is because the scalar values of $v_m(i), v_l(i)$ and $b(i)$ are
  equal for $i\in J$ while for $i\in I\backslash J$, $v_m(i), v_l(i)$
  and $b(i)$ are from a continuous distribution, so equal with
  probability zero with each positive scalar distances.
  Further, if there are at least $k$ sample points in $B$, each will
  have a distinct $\ell_\infty$-distance to $b$ for the same reason.
  Thus, $\tilde k(b) = k$ with probability one.

  Generalizing on this point,
  let $v\in C$ and let $J = \curly{i\in I: P_i(\{v(i)\}) = 0}$.
  Define $B_J(v,\delta)\subseteq \calV$ to be the Cartesian product of
  $[v(i)-\delta,v(i)+\delta]$ for $i\in J$ and $\{v(i)\}$ for $i\in
  I\backslash J$,
  \[B_J(v,\delta) = \prod_{i\in J}[v(i)-\delta,v(i)+\delta]\times
    \prod_{i\in I\backslash J}\{v(i)\}.\]
  $\calV_J$ may have positive point masses among continuous points.
  Because $D_J$ is nowhere dense in $\calV_J$, there is $\delta_v > 0$
  such that $D_J\cap \pi_J(B_J(v,\delta_v)) = \varnothing$ 
  (where $\pi_J$ is the projection onto $J$) and
  $P_J(B_J(v,\delta_v))>0$.
  Notice that if there are more than $k$ sample points in
  $B(v,\delta_v)$ then $\tilde k(v) = k$.

  Let $W = \curly{v\in C: \delta_v \geq \zeta}$ and fix $w\in W$.
  Let $\epsilon\in [\zeta, 0)$ and $\rho_v$ be the
  $\ell_\infty$-distance from $v$ to its $k$NN in the sample
  $v_1,\dots, v_n$. 
  Choose $N$ large enough so that for all $n\geq N$,
  $\frac{k}{n}\leq P(B_J(v,\epsilon))$.
  Then using Chernoff's bound,
  \begin{align*}
    \P(\rho_v > \epsilon)
    &=\P(\text{Binomial}(n,P(v,\epsilon)) \leq k)\\
    &\leq \exp\curly{-\paren{\frac{1}{2}nP(v,\epsilon)-k}}.
  \end{align*}
  So, $\sum_{n=1}^\infty\P(\rho_v>\epsilon)<\infty$.
  Using the Borel-Cantelli lemma, \cite[Lemma
  2.2.4]{dembo2019probability}, $\rho_v\rightarrow 0$ almost surely
  as $n\rightarrow\infty$.

  Notice that
  \begin{equation*}
    \P(\rho_V > \epsilon|V\in W)
    =\int_W \P(\rho_v > \epsilon)dP(v).
  \end{equation*}
  Using the Lebesgue dominated convergence theorem, with the fact that
  for each $n, \P(\rho_v > \epsilon)\leq 1$ for all $v\in W$ and
  $\P(\rho_v > \epsilon)\rightarrow 0$ almost surely, we have
  $\P(\rho_V > \epsilon|V\in W)\rightarrow 0$ almost surely
  as $n\rightarrow\infty$.
  Then $\tilde k(V)\rightarrow k$ given that
  $V\in W$ almost surely as $n\rightarrow\infty$.
  
  Consider
  \[C\backslash W = \curly{v\in C: \delta_v< \zeta}.\]
  For each $v\in C\backslash W$, there must be $J\subseteq I$ such
  that 
  \[D\equiv (D_J\times \curly{v(I\backslash J)}) \cap
    B_J(v,\zeta)\neq\varnothing.\]
  There may be points $x\in D$ such that $P(\{x\}) = 0$. 
  Notice that
  \[C\backslash W = \bigcup_{x\in D} B(x,\zeta).\]
  Similarly, for each $x\in D$, there is $J\subseteq I$ such that
  $x(J)\in D_J$.
  Because $D\subseteq \bigcup_{J\subseteq I}(D_J\times D_{I\backslash
    J})$, $D$ is countable.
  By choice of $\zeta$, for every two points $a,b\in D$,
  $\norm{a-b}_\infty>\zeta$, so
  $i\neq j, B(x_i,\zeta)\cap B(x_j,\zeta) = \varnothing$..
  With both of these,
  \[P\paren{\bigcup_{x\in D} B(x,\zeta)} =
    \sum_{i=1}^\infty P\paren{B(x_i,\zeta)}.\]

  For $x_i\in D$, for all $v\in B(x_i,\zeta)$, there is no $J$, such
  that $v(J)\in D_J$ because of choice of $\zeta$.
  Stated differently, for each $J\subseteq I$, $P(\{v(J)\}) = 0$.
  Consequently, there can be no ties in distance to points other than
  $x_i$.
  Let $ K_{x_i}= \curly{v\in B(x_i,\zeta)\backslash \{x_i\}:
    x_i\in B(v,\rho_v)}$
  Using \cite[Corollary 6.1]{gyorfi2006distributionfree},
  $\abs{K_{x_i}}\leq k\gamma_d$
  where $\gamma_d$ is a function of only the dimension $d$.
  Let $p_i = P(B(x_i,\zeta)\backslash\{x_i\})$, then
  \begin{align*}
    &\P\paren{\tilde k(v) > k: v\in B(x_i,\zeta)\backslash\{x_i\}}\\
    & \leq \P(x_i\in B(v,\rho_v))\\
    & = \P(v\in K_{x_i}).
  \end{align*}
  This probability depends on the number of sample points that fall
  into $B(x_i,\zeta)$.
  Looking at the random variable and using Chernoff,
  \begin{align*}
    &\P(\tilde k(V) > k| V\in B(x_i,\zeta)\backslash\{x_i\})\\
    &\leq\P(V\in K_{x_i}|V\in B(x_i,\zeta)\backslash \{x_i\})\\
    &=\P\paren{\text{Binomial}(n, p_i)\leq k\gamma_d}\\
    &\leq \exp\curly{-\paren{\frac{1}{2}np_i-k\gamma_d}}.
  \end{align*}
  So, $\sum_{n=1}^\infty\P(\tilde k(V) > k| V\in
  B(x_i,\zeta)\backslash\{x_i\})<\infty$.
  Using the Borel-Cantelli lemma, \cite[Lemma
  2.2.4]{dembo2019probability}, $\tilde k(V)\rightarrow k$ given that
  $V\in C\backslash W$ almost surely as $n\rightarrow\infty$.
\end{proof}

\begin{lem}\label{distDeriv}
  Let $F_\rho(r)$ be the probability that the distance to a point's
  $k$NN in a sample of $n$ points is $\rho \leq r$ and let $P_W(r)$ be
  the probability mass of the ball of radius $r$ centered at the same
  point.
  Then
  \begin{equation}\begin{split}
      \frac{dF_\rho}{dP_W}(r)
      &= \frac{(n-1)!}{(k-1)!(n-k-1)!}\times\\
      &\qquad\brac{P_W(r)}^{k-1}\brac{1-P_W(r)}^{n-k-1}.
  \end{split}\end{equation}
\end{lem}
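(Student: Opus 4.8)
The plan is to write $F_\rho(r)$ in closed form as a function of $p\equiv P_W(r)$ and then differentiate. Fix the reference point $w$. Among the other $n-1$ sample points, each lies in the closed $\ell_\infty$-ball $\curly{v:\norm{v-w}_\infty\leq r}$ independently with probability $P_W(r)$, and the distance to the $k$th nearest neighbor is $\leq r$ precisely when at least $k$ of these $n-1$ points fall in that ball. Hence
\[
  F_\rho(r)=\P\paren{\text{Binomial}(n-1,P_W(r))\geq k}
  =\sum_{j=k}^{n-1}\binom{n-1}{j}[P_W(r)]^j[1-P_W(r)]^{n-1-j}\equiv g(P_W(r)),
\]
where $g(p)=\sum_{j=k}^{n-1}\binom{n-1}{j}p^j(1-p)^{n-1-j}$ is a polynomial in $p$ (the regularized incomplete beta function $I_p(k,n-k)$), for $1\leq k\leq n-1$.

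Next I would compute $g'(p)$. Differentiating term by term,
\[
  g'(p)=\sum_{j=k}^{n-1}\binom{n-1}{j}\brac{j\,p^{j-1}(1-p)^{n-1-j}-(n-1-j)\,p^j(1-p)^{n-2-j}}.
\]
Writing $u_j=\binom{n-1}{j}j\,p^{j-1}(1-p)^{n-1-j}$, the identity $\binom{n-1}{j}(n-1-j)=\binom{n-1}{j+1}(j+1)$ shows that the negative part of the $j$th summand equals $u_{j+1}$, so the sum telescopes to $u_k-u_n$; since $\binom{n-1}{n}=0$ we have $u_n=0$, leaving
\[
  g'(p)=u_k=\binom{n-1}{k}k\,p^{k-1}(1-p)^{n-1-k}=\frac{(n-1)!}{(k-1)!(n-k-1)!}\,p^{k-1}(1-p)^{n-k-1}.
\]
(Alternatively one may simply quote the derivative of $I_p(k,n-k)$.)

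Finally, since $g$ is $C^1$ and $F_\rho=g\circ P_W$ as functions of $r$, the chain rule for Lebesgue--Stieltjes measures yields $dF_\rho=(g'\circ P_W)\,dP_W$ on $[0,\infty)$, i.e.\ $\frac{dF_\rho}{dP_W}(r)=g'(P_W(r))$, which is the asserted formula. The one delicate point, and what I expect to be the main obstacle, is this last step: the naive composition rule for Stieltjes measures can fail at atoms of $P_W$, so I would either restrict to the regime in which the lemma is applied (where $P_W(\cdot)$ is continuous in $r$ and the substitution $p=P_W(r)$ in the integrals of Case~3 is legitimate) or argue the identity directly from $F_\rho=g\circ P_W$ on the relevant interval of $r$. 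The combinatorial telescoping in the second step is routine; the conceptual content is recognizing the $k$NN-distance event as a binomial tail.
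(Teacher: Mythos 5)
Your proposal is correct and follows essentially the same route as the paper: express $F_\rho(r)$ as the binomial tail $\P(\text{Binomial}(n-1,P_W(r))\geq k)$ and differentiate with respect to $p=P_W(r)$, with the sum telescoping to the single $j=k$ term. Your added remark on interpreting $dF_\rho/dP_W$ at atoms of $P_W$ is a fair point the paper glosses over, but it does not change the argument, which is applied in the continuous regime anyway.
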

\begin{proof}
  Let $\rho_1,\hdots,\rho_{n-1}$ be the ordered distances from the
  point of interest.
  The probability that $k$th largest distance is at least $r$ is
  \begin{align*}
    &\P\paren{\rho_k\leq r}\\
    &= \P\paren{I(\rho_i\leq r)\geq k}\\
    &= \sum_{j=k}^{n-1} \P\paren{I(\rho_i\leq r) = j}\\
    &= \sum_{j=k}^{n-1} \binom{n-1}{j}
      \brac{P_W(r)}^{j}\brac{1-P_W(r)}^{n-j-1}.
  \end{align*}
  Taking the derivative with respect to $P_w(r)\equiv p$,
  \begin{align*}
    &\frac{dF_\rho}{dp}\\
    &= \sum_{j=k}^{n-1} \binom{n-1}{j}\frac{d}{dp}
      \brac{p^{j}(1-p)^{n-j-1}}\\
    &= \sum_{j=k}^{n-1} \binom{n-1}{j}
      [jp^{j-1}(1-p)^{n-j-1}\\
    &\quad - p^{j}(n-j-1)(1-p)^{n-j-2}]\\
    &= \sum_{j=k}^{n-1} \frac{(n-1)!}{(j-1)!(n-j-1)}
      p^{j-1}(1-p)^{n-j-1}\\
    &\quad - \sum_{j=k}^{n-1} \frac{(n-1)!}{j!(n-j-2)}
      p^{j}(1-p)^{n-j-2}\\
    &= \frac{(n-1)!}{(k-1)!(n-k-1)!}p^{k-1}(1-p)^{n-k-1}.
  \end{align*}
  The last equality follows from realizing that all terms cancel
  except for $j=k$ in the first term.
\end{proof}


%

\begin{defn}\label{nonsing}
  Let $(W,\mathcal{B},P)$ be a $d$-dimensional probability space with
  $W = \prod_{i\in I}W_i$ where $I = \curly{1,\dots,d}$ and
  $P_J = P_{W_i:i\in J}$ for $J\subseteq I$.
  For $A\subseteq W$, and
  $v= (v_i:i\in J)\in\prod_{i\in J}W_i$ let
  $A_{v} = \curly{\paren{a_i:i\in I\backslash J}: (a_i:i\in I)\in
    A, a_i=v_j, i=j\in J}$.
  The probability measure, $P$, is \emph{non-singular} if for some
  $J\subseteq I$ and $A\subseteq W$ in the support of $P$,
  \begin{equation}
    \begin{split}
      P&\left(\left\{(a_i:i\in I):\right.\right.\\
          &\left.\left. P_{I\backslash J}(A_{(a_i:i\in J)}) =
        P_J(A_{(a_i:i\in I\backslash J)}) = 0\right\}\right)=0.
    \end{split}
  \end{equation}
\end{defn}

\begin{lem}\label{domMeasure}
  Let $V = (V_1, V_2,\dots, V_d)$ be a $d$-dimensional random vector
  on the probability space,
  $(\mathcal{V},\mathcal{B}_{\mathcal{V}},P_V)$ 
  where $V_i$ is either continuous, countably discrete, or a mix of
  both.
  If $P$ is non-singular then there exists a product measure $\mu$ on
  the same space such that $P_V\ll\mu$.\\
\end{lem}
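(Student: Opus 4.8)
The plan is to build the dominating product measure coordinate by coordinate and then to verify absolute continuity by an inclusion--exclusion style decomposition generalizing the three-set argument ($A\subseteq A_1\cup A_2\cup A_3$) from the proof of Theorem~\ref{rnwd}, from two factors to $d$ factors. First I would fix the factors: for each $i\in\curly{1,\dots,d}$, apply the Lebesgue decomposition to the marginal law $P_{V_i}$ on $\mathcal{V}_i\subseteq\R$. Since $V_i$ is continuous, countably discrete, or a mixture of the two, $P_{V_i}$ carries no singular--continuous component, so $P_{V_i}\ll\mu_i$, where $\mu_i:=\lambda_i+c_i$, $\lambda_i$ is Lebesgue measure on $\mathcal{V}_i$ and $c_i$ is counting measure on the countable atom set $D_i=\curly{w\in\mathcal{V}_i:P_{V_i}(\curly{w})>0}$. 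Each $\mu_i$ is $\sigma$-finite, so $\mu:=\mu_1\times\cdots\times\mu_d$ is a $\sigma$-finite product measure on $\mathcal{V}$, and by the Radon--Nikodym theorem it suffices to prove $P_V\ll\mu$.

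To prove $P_V\ll\mu$, take $A\in\mathcal{B}_{\mathcal{V}}$ with $\mu(A)=0$. Since $P_V$ is a Borel probability measure on a separable metric space, its support carries full measure, so replacing $A$ by $A\cap\operatorname{supp}P_V$ we may assume $A$ lies in the support of $P_V$. For $a\in\mathcal{V}$ and $J\subseteq I=\curly{1,\dots,d}$, write $A_a^{J}\subseteq\prod_{i\in J}\mathcal{V}_i$ for the $J$-fiber of $A$ through $a$ --- the $J$-tuples that, recombined with the coordinates of $a$ outside $J$, land in $A$; in the notation of Definition~\ref{nonsing} this is $A_{(a_i:i\in I\setminus J)}$. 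For each nonempty proper $J\subsetneq I$ put
\[
  B_J=\Bigl\{a\in A:\ \bigl(\textstyle\prod_{i\in J}\mu_i\bigr)\bigl(A_a^{J}\bigr)>0\Bigr\}.
\]
Factoring $\mu=\bigl(\prod_{i\in J}\mu_i\bigr)\times\bigl(\prod_{i\notin J}\mu_i\bigr)$, Tonelli's theorem together with the implication ``$f\ge0,\ \int f\,d\nu=0\Rightarrow\nu\curly{f>0}=0$'' (\cite[Lemma~1.3.8]{dembo2019probability}) shows that the projection of $B_J$ onto the coordinates outside $J$ is $\bigl(\prod_{i\notin J}\mu_i\bigr)$-null; by the inductive hypothesis in dimension $d-\abs{J}$ applied to the marginal $P_{V_{I\setminus J}}$ it is then $P_{V_{I\setminus J}}$-null as well, and since $B_J$ lies inside the cylinder over it, $P_V(B_J)=0$. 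Every point of the residual set $A\setminus\bigcup_{\varnothing\ne J\subsetneq I}B_J$ has all of its proper fibers of $\mu$-measure zero, hence --- again using that each lower-dimensional marginal of $P_V$ is dominated by the corresponding sub-product of the $\mu_i$'s --- all of its proper fibers null for the associated marginal; so, for the subset $J$ guaranteed by Definition~\ref{nonsing}, this residual set is contained in the set that non-singularity declares $P_V$-null. Summing the pieces gives $P_V(A)=0$.

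The hard part will be closing this recursion: one needs that every lower-dimensional marginal of a non-singular law is again non-singular (and still has only continuous/discrete/mixed coordinates), so that the inductive hypothesis applies. If that is awkward to establish at the level of marginals, the alternative is to disintegrate $P_V$ as an iterated composition of regular conditional probabilities $P_{V_k\mid V_1,\dots,V_{k-1}}$ --- available because the spaces are standard Borel --- and push $\mu_k$-absolute continuity through one coordinate at a time, with non-singularity playing exactly the role of forbidding a continuous coordinate from being a deterministic function of the earlier ones. The base case $d=1$ is just the Lebesgue decomposition and needs no non-singularity, and the remaining ingredients --- Tonelli, the cylinder bound, and matching the residual set to the quantifier in Definition~\ref{nonsing} --- are routine.
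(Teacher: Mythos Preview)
Your construction of the $\mu_i$ matches the paper's, and the route the paper takes is essentially your second alternative: it inducts one coordinate at a time, proving $P_{V_1\cdots V_j}\ll\prod_{i=1}^j\mu_i$ for $j=1,\dots,d$. It does not use regular conditional probabilities; the inductive step from $j$ to $j{+}1$ is exactly the three-set decomposition of Theorem~\ref{rnwd}, with $A_1,A_2,A_3$ defined in terms of $P$-null fibers rather than $\mu$-null ones. The cylinder pieces $A_1,A_2$ are killed by Fubini together with the one-dimensional fact $P_{V_{j+1}}\ll\mu_{j+1}$ and the inductive hypothesis. The residual $A_3$ is handled by lifting to the full space as $A_3\times\prod_{i=j+2}^d\mathcal{V}_i$ and invoking non-singularity of the \emph{full} law $P_V$ with the split $\{1,\dots,j\}$ versus $\{j+1,\dots,d\}$, giving $P_{V_1\cdots V_{j+1}}(A_3)=P_V\bigl(A_3\times\prod_{i\ge j+2}\mathcal{V}_i\bigr)=0$.

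This lifting device is precisely what sidesteps the difficulty you flagged: the induction is internal to the fixed $P_V$, and at every step only non-singularity of the top-level $d$-dimensional law is used, never of any marginal. Your all-subsets decomposition into the $B_J$ is more symmetric, but because you phrase it via $\mu$-null fibers you are forced to convert to $P$-null fibers on the residual before Definition~\ref{nonsing} applies---and that conversion is exactly domination of the lower-dimensional marginal, which is what you are trying to prove. The paper avoids this circularity by keeping one factor one-dimensional at each step (where Lebesgue decomposition alone gives $P_{V_{j+1}}\ll\mu_{j+1}$) and by defining the residual directly in $P$-terms so that non-singularity of $P_V$ applies without any conversion.
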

\begin{proof}
  We construct $\mu$ by looking at the scalar coordinates of
  $V\equiv (V_1,\dots,V_d)$ over its product space,
  $\calV\equiv \calV_1\times\calV_2\times\dots\times\calV_d$.
  If $\calV_i$ is not a subset of $\R$, $V_i$ is categorical and we
  use a zero-one distance metric.
  So that we can work exclusively in $\R^c$ for some positive integer
  $c$, we create dummy indicators for all categories except one; this
  preserves the $\ell_\infty$ metric for categorical variables.
  Recall that the marginal measure for any scalar coordinate is
  $P_{V_i}(A) = P_V(\calV_1\dots\times\calV_{i-1}\times
  A\times \calV_{i+1}\times\dots\times\calV_d)$ where
  $A\subseteq\calV_i$.
  For each $i = 1,\dots,d$, redefine $\calV_i$ by restricting it to
  the support of $P_{V_i}$ and $\mathcal{B}_{\calV_i}$ the
  corresponding $\sigma$-algebra.
  Partition $\calV_i$ into its discrete and continuous parts.
  For a set $A$ contained within the support of a random variable, $U$,
  let $C_U(A) = \curly{x\in A: P_U(x)=0}$ be the continuous partition and
  $D_U(A) = \curly{x\in A: P_U(x)>0}$, which is countable by
  assumption.
  Clearly $C_U(A)\cup D_U(A) = A$ and $C_U(A)\cap D_U(A) =
  \varnothing$ for all random variables $U$. 
  Let $\lambda$ be the Lebesgue measure and $\nu$ be the counting
  measure. 
  Define the measure
  $\mu_i:\mathcal{B}_{\calV_i}\rightarrow [0,\infty)$ to be 
  $\lambda + \nu_i$, where $\nu_i(C_{V_i}(\calV_i))=0$ and the
  counting measure on $D_{V_i}(\calV_i)$,
  $\nu_i(D_{V_i}(\calV_i)) = \nu(D_{V_i}(\calV_i))$.
  It is easy to see that $\mu_i$ is a well-defined measure on the
  measurable space, $(\calV_i,\mathcal{B}_{\calV_i})$ because both the 
  counting measure and Lebesgue measures are well-defined, as is their
  sum.
  Define a measure $\mu:\mathcal{B}_{\calV}\rightarrow\R$ as the
  product measure, $\mu = \mu_1\times\mu_2\times\dots\times\mu_d$.

  With the construction complete, we now show that $P_V\ll\mu$.
  We begin by showing that for each coordinate, $j=1,\dots,d$,
  $P_{\calV_j}\ll\mu_j$.
  Let $j = 1,\dots,d$ and $A\in\mathcal{B}_{\calV_i}$ with
  $\mu_j(A)=0$.
  Consider the continuous and discrete partitions, $C_{V_j}(A)$ and
  $D_{V_j}(A)$, respectively.
  By definition, $\lambda(C_{V_j}(A)) + \nu_j(D_{V_j}(A)) = 0$ so  
  $\lambda(C_{V_j}(A)) = 0$ and $\nu_j(D_{V_j}(A)) = 0$.
  If the coordinate project for $j$ has a nonempty continuous
  partition, then $P_{V_j}\ll \lambda$ on $C_{V_j}(\calV_j)$, so
  $P_{V_j}(C_{V_j}(A)) = 0$.
  Also, $0 = \nu_j(D_{V_j}(A)) = \nu(D_{V_j}(A))$, so $D_{V_j}(A) =
  \varnothing$, so $P_{V_j}(D_{V_J}(A)) = 0$.
  Then $P_{V_j}(A) = P_{V_j}(C_{V_j}(A)) + P_{V_j}(D_{V_j}(A)) = 0$

  Proceeding by mathematical induction, we already have $P_{V_1}\ll
  \mu_1$.
  Assume that $P_{V_1\dots V_j}\ll \mu_1\times\dots\times\mu_j\equiv
  \prod_{i=1}^j\mu_i$ and 
  that for some   $A\in\mathcal{B}_{\calV_1\dots\calV_j\calV_{j+1}}$
  (the product   $\sigma$-algebra) $(\prod_{i=1}^{j+1}\mu_i)(A) = 0$.
  Let $A_{v_1,\dots,v_j} = \curly{v_{j+1}:(v_1,\dots,v_j,v_{j+1})\in
    A}$ and 
  $A_{v_{j+1}}= \curly{(v_1,\dots,v_j): (v_1,\dots,v_j,v_{j+1})\in
    A}$.
  Let $A_1 = \calV_1\times\dots\times\calV_j
  \curly{v_{j+1}: P_{V_1\dots V_j}(A_{v_{j+1}})>0}$ and
  $A_2 = \curly{(v_1,\dots,v_j):
    P_{V_{j+1}}(A_{v_1,\dots,v_j})>0}\times \calV_{j+1}$.
  
  Using Fubini's theorem,
  \begin{align*}
    0 &= \paren{\prod_{i=1}^{j+1}\mu_i}(A)\\
      &= \paren{\prod_{i=1}^{j}\mu_i\times \mu_{j+1}}(A)\\
      &= \int_{\calV_{j+1}} \paren{\prod_{i=1}^{j}\mu_i}(A_{v_{j+1}})
        d\mu_{j+1}(v_{j+1}).
  \end{align*}
  
  Using \cite[Lemma 1.3.8]{dembo2019probability}, $f\geq0, \int
  fd\mu=0\Rightarrow \mu\curly{x:f(x)>0}=0$, we must have
  \begin{align*}
    0 &= \mu_{j+1}\paren{\curly{v_{j+1}:
        \paren{\prod_{i=1}^{j}\mu_i}A_{v_{j+1}})>0}}\\
      &= \mu_{j+1}\paren{\calV_{j+1}\backslash\curly{v_{j+1}: 
        \paren{\prod_{i=1}^{j}\mu_i}(A_{v_{j+1}})=0}}\\
      &\geq \mu_{j+1}\paren{\calV_{j+1}\backslash\curly{v_{j+1}: 
        P_{V_1\dots V_j}(A_{v_{j+1}})=0}}.
  \end{align*}
  The last inequality follows because $P_{V_1\dots V_j}\ll
  \prod_{i=1}^{j}\mu_i$ implies that
  $\curly{v_{j+1}:\paren{\prod_{i=1}^{j}\mu_i}(A_{v_{j+1}})=0}
  \subseteq \curly{v_{j+1}: P_{V_1\dots V_j}(A_{v_{j+1}})=0}$.
  Then $\mu_{j+1}\paren{\calV_{j+1}\backslash\curly{v_{j+1}: 
      P_{V_1\dots V_j}(A_{v_{j+1}})=0}} = 0$.
  But, $P_{V_{j+1}} \ll \mu_{j+1}$ implies that
  \begin{align*}
    0 &= P_{V_{j+1}}\paren{\calV_{j+1}\backslash\curly{v_{j+1}: 
        P_{V_1\dots V_j}(A_{v_{j+1}})=0}}\\
      &= P_{V_{j+1}}\paren{\curly{v_{j+1}:
        P_{V_1\dots V_j}(A_{v_{j+1}})>0}}\\
      &= P_{V_1\dots V_jV_{j+1}}(A_1).
  \end{align*}

  Using the same procedure but switching $\prod_{i=1}^j\mu_i$ and
  $\mu_{j+1}$ and correspondingly, switching $P_{V_1\dots V_j}$ and
  $P_{V_{j+1}}$, it is easy to show that
  \begin{align*}
  0 &= P_{V_1\dots V_j}\paren{\curly{(v_1,\dots,v_j):
      P_{V_{j+1}}(A_{v_1,\dots,v_j})>0}}\\
    &= P_{V_1\dots V_jV_{j+1}}(A_2).
  \end{align*}
  
  Consider the set of points $(v_1,\dots,v_j,v_{j+1})$ such that each
  coordinate satisfies $P_{V_{j+1}}(A_{v_1,\dots,v_j}) = 0$ and $P_{V_1\dots
    V_j}(A_{v_{j+1}}) = 0$; call this set, $A_3$.
  Showing that $P(A_3)=0$, consider the set of points, $(a_1,\dots
  a_d)\in B\subseteq\calV$ such that 
  \[P_{V_{j+1}\dots V_d}\paren{\brac{A\times
        \prod_{i=j+2}^d\calV_i}_{(a_1,\dots,a_j)}}=0\]
  and \[P_{V_1\dots V_j}\paren{\brac{A\times
        \prod_{i=j+2}^d\calV_i}_{(a_{j+1},\dots,a_d)}}=0.\]
  Let $(b_1,\dots,b_d)\in A_3\times\prod_{i=j+2}^d\calV_i$.
  Then
  \begin{align*}
    &P_{V_{j+1}\dots V_d}\paren{\brac{A\times
      \prod_{i=j+2}^d\calV_i}_{(b_1,\dots,b_j)}}\\
    &= P_{V_{j+1}\dots V_d}\paren{A_{(b_1,\dots,b_j)}\times
      \prod_{i=j+2}^d\calV_i}\\
    &= P_{V_{j+1}}\paren{A_{(b_1,\dots,b_j)}} = 0
  \end{align*}
  and
  \begin{align*}
    &P_{V_1\dots V_j}\paren{\brac{A\times
      \prod_{i=j+2}^d\calV_i}_{(b_{j+1},\dots,b_d)}}\\
    &= P_{V_1\dots V_j}\paren{A_{b_{j+1}}} = 0.
  \end{align*}
  Then
  \[A_3\times\prod_{i=j+2}^d\calV_i\subseteq B.\]
  Because $P_V$ is non-singular, $P_V(B) = 0$, so
  \[P_V\paren{A_3\times\prod_{i=j+2}^d\calV_i} = P_{V_1\dots
        V_jV_{j+1}}(A_3) = 0.\]
  
  Now, $A\subseteq A_1\cup A_2\cup A_3$ implies that
  \begin{align*}
    &P_{V_1\dots V_j V_{j+1}}(A)\\
    &\leq P_{V_1\dots V_j V_{j+1}}(A_1\cup A_2\cup A_3)\\
    &\leq P_{V_1\dots V_j V_{j+1}}(A_1) + P_{V_1\dots V_j V_{j+1}}(A_2)\\
    &\quad + P_{V_1\dots V_j V_{j+1}}(A_3)\\
    &= 0,
  \end{align*}
  so $P_{V_1\dots V_j V_{j+1}}(A) = 0$.
  Thus, by mathematical induction, for any positive integer, $d$, we
  have that $P_V\ll \mu$.
\end{proof}

\begin{lem}\label{rnDerivLim}
  Let $\mu$ and $\nu$ be nonsingular probability measures 
  on $(\R^d,\mathcal{B})$ such that $\nu\ll\mu$
  and assume $\curly{x:\mu(\{x\})>0}$ is nowhere dense in $\R^d$.
  Let $B(x,r)$ be a ball of radius $r$ centered at $x$.
  If $\mu(\{x\})>0$ then
  \[\frac{d\nu}{d\mu}(x) = \frac{\nu(\{x\})}{\mu(\{x\})}\]
  otherwise
  \begin{equation}
    \frac{d\nu}{d\mu}(x) = 
    \lim_{r\rightarrow 0}\frac{\nu(B(x,r))}{\mu(B(x,r))}.
  \end{equation}
\end{lem}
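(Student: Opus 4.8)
The plan is to identify a concrete version of the Radon--Nikodym derivative $g=d\nu/d\mu$ --- which exists and is unique up to $\mu$-null sets, since $\nu\ll\mu$ and both measures are finite Borel measures on $\R^d$ --- by splitting the space at the atom set $A=\{x:\mu(\{x\})>0\}$. Note that $A$ is Borel and countable, being the atom set of a finite measure, and that by absolute continuity every atom of $\nu$ lies in $A$ (if $\mu(\{x\})=0$ then $\nu(\{x\})=0$).

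First I would dispatch the atomic points. For $x\in A$, integrating the defining identity over the singleton gives $\nu(\{x\})=\int_{\{x\}}g\,d\mu=g(x)\mu(\{x\})$, so $g(x)=\nu(\{x\})/\mu(\{x\})$; since $\mu(\{x\})>0$ this value is forced, so \emph{every} version of $g$ satisfies the first formula at $x$. Moreover, because $\mu$ and $\nu$ are finite and the balls $B(x,r)$ decrease to $\{x\}$ as $r\downarrow 0$, continuity of measure from above gives $\nu(B(x,r))\to\nu(\{x\})$ and $\mu(B(x,r))\to\mu(\{x\})>0$, so the ratio limit in the second formula also exists at every $x\in A$ and agrees with $\nu(\{x\})/\mu(\{x\})$; the case split in the statement arises only because at non-atoms the limit is the one characterization available.

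For the non-atomic points the substantive input is a differentiation theorem for the pair $\mu,\nu$. Since $g\in L^{1}(\mu)$ (indeed $\int g\,d\mu=\nu(\R^d)=1$), I would invoke the Besicovitch differentiation theorem for Radon measures on $\R^d$ --- which applies to the $\ell_\infty$-balls used here, cubes being homothets of a fixed bounded convex body and so admitting a Besicovitch-type covering theorem --- to conclude that $\mu$-a.e.\ $x$ is a Lebesgue point of $g$ with respect to $\mu$, i.e.
\begin{equation*}
  \lim_{r\to 0}\frac{\nu(B(x,r))}{\mu(B(x,r))}
  =\lim_{r\to 0}\frac{1}{\mu(B(x,r))}\int_{B(x,r)}g\,d\mu=g(x)
  \qquad\text{for }\mu\text{-a.e. }x .
\end{equation*}
In particular this holds for $\mu$-a.e.\ $x$ with $\mu(\{x\})=0$. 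Redefining $g$ on the exceptional $\mu$-null set to equal this limit (and arbitrarily on the further null set where even the limit fails) keeps $g$ measurable --- it is a pointwise limit of measurable functions where the limit exists --- and changes no integral $\int_E g\,d\mu$, so the redefined $g$ is still a version of $d\nu/d\mu$ and now satisfies the second formula wherever $\mu(\{x\})=0$.

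The one genuinely nontrivial ingredient, and the step I expect to be the main obstacle, is the differentiation theorem itself: because $\mu$ is an arbitrary probability measure (in the application a product $P_{X|Z}\times P_{Y|Z}$) that need not be doubling, a Vitali-type argument is unavailable and one must pass through the Besicovitch covering theorem; the work is to cite (or reprove) a version valid for cubes rather than Euclidean balls. Everything else --- countability of $A$, measurability of the assembled $g$, and the compatibility of the decomposition $\mu=\mu|_{A}+\mu|_{A^{c}}$ into a purely atomic and a non-atomic part (the nowhere-dense/countability hypothesis on $A$ being the standing regularity assumption carried from Theorem~\ref{unbiased}) --- is routine bookkeeping.
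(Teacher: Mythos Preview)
Your proof is correct, but it takes a genuinely different route from the paper's. The paper does not invoke Besicovitch differentiation for general Radon measures; instead it uses the nowhere-dense hypothesis in an essential way to reduce locally to classical Lebesgue differentiation. Concretely, at a non-atom $x$ the paper finds $\delta>0$ so that $B(x,\delta)$ contains no $\mu$-atoms, separates the coordinates of $x$ into a discrete index set $I_{\text{disc}}$ and a continuous one $I_{\text{cont}}$, shrinks $\delta$ further so the discrete coordinates are constant on $B(x,\delta)\cap\operatorname{supp}\mu$, and then argues that on this neighborhood $\mu$ and Lebesgue measure $\lambda$ (on the continuous coordinates) are mutually absolutely continuous. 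Rudin's Theorem~7.8 then gives $d\nu/d\lambda$ and $d\mu/d\lambda$ as ball-ratio limits, and the chain rule $d\nu/d\mu=(d\nu/d\lambda)(d\mu/d\lambda)^{-1}$ yields the claim.

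What each approach buys: your Besicovitch argument is shorter, avoids all the coordinate bookkeeping, and in fact shows the hypothesis that the atom set is nowhere dense is unnecessary for this lemma (you correctly note you never use it). The paper's argument, by contrast, stays within the more elementary toolkit of \cite{rudin1987real} and is consistent with the coordinate-wise construction of the dominating measure in Lemma~\ref{domMeasure}; the price is the extra structural hypothesis and the somewhat delicate identification of $B(x,\delta)$ with a Lebesgue-dominated slice. Your observation that the conclusion is only $\mu$-a.e.\ and requires choosing a particular version of $g$ is a point the paper leaves implicit.
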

\begin{proof}
  If $\mu(\{x\})>0$,
  then $\frac{d\nu}{d\mu}(x) = \frac{\nu(\{x\})}{\mu(\{x\})}$:
  \[\int_{\{x\}}\frac{\nu(\{x\})}{\mu(\{x\})}d\mu
    = \frac{\nu(\{x\})}{\mu(\{x\})} \mu(\{x\}) = \nu(\{x\}).\]
  
  If $\mu(\{x\})=0$ and in the support of $\mu$, there must be some
  $\delta>0$ such that for every $y\in B(x,\delta), \mu(\{y\})=0$
  because $\curly{x:\mu(\{x\})>0}$ is nowhere dense in $\R^d$ and
  $\mu(B(x,\delta))>0$.
  Notice that some coordinates of $x = (x_1,\dots,x_d)$ may be
  discrete but there must be at least one continuous coordinate in
  order for $\mu(\{x\})=0$.
  Let $I_{\text{cont}}$ be the index of continuous coordinates of $x$
  and $I_{\text{disc}}$ be the index of discrete coordinates of $x$.
  Using the proof of lemma~\ref{domMeasure}, each coordinate of
  $I_{\text{cont}}$ will be dominated by the Lebesgue measure within
  $B(x,\delta)$.
  Again, because $\curly{x:\mu(\{x\})>0}$ is nowhere dense in $\R^d$,
  \[\delta_{\text{disc}} \equiv\min_{y\in\text{Supp}(\mu)}
    \curly{\norm{x_{\text{disc}} - y_{\text{disc}}}_{\ell_\infty}:
      x_{\text{disc}} \neq y_{\text{disc}}}>0\]
  where $z_{\text{disc}} \equiv (z_i:I_{\text{disc}})$ for $z=x,y$.
  If $\delta > \delta_{\text{disc}}$, then redefine $\delta =
  \delta_{\text{disc}}$.
  Now $x_{\text{disc}}$ is constant within $B(x,\delta)$ and
  homeomorphic to a subset of $\R^a$ for some integer $a\leq d$ with
  the corresponding Lebesgue measure.
  Then $\mu\ll\lambda$ on $B(x,\delta)$ where $\lambda$ is Lebesgue on
  the support of $\mu$ and zero otherwise so that $\lambda\ll\mu$ as
  well.
  Using \cite[Theorem 7.8]{rudin1987real},
  \[\frac{d\nu}{d\lambda}(x) =
    \lim_{r\rightarrow0}\frac{\nu(B(x,r))}{\lambda(B(x,r))}\]
  and
  \[\frac{d\mu}{d\lambda}(x) =
    \lim_{r\rightarrow0}\frac{\mu(B(x,r))}{\lambda(B(x,r))}.\]
  Notice that $\mu\ll\lambda$ and $\lambda\ll\mu\Rightarrow
  \frac{d\mu}{d\lambda}(x)>0$.
  Then
  \begin{align*}
    \frac{d\nu}{d\mu}(x)
    &= \paren{\frac{d\nu}{d\lambda}\frac{d\lambda}{d\mu}}(x)\\
    &= \brac{\frac{d\nu}{d\lambda}
      \paren{\frac{d\lambda}{d\mu}}^{-1}}(x)\\
    &= \paren{\lim_{r\rightarrow0}
      \frac{\nu(B(x,r))}{\lambda(B(x,r))}}
      \paren{\lim_{r\rightarrow0}
      \frac{\mu(B(x,r))}{\lambda(B(x,r))}}^{-1}\\
    &= \lim_{r\rightarrow0}\frac{\nu(B(x,r))}{\mu(B(x,r))}.
  \end{align*}
\end{proof}

\begin{lem}\label{cmiDeriv}
  Assume $0<f\leq C$, for some $C > 0$ if $P_{XYZ}(\{(x,y,z)\})>0$ then
  \[f(x,y,z) = \frac{P_{XYZ}(\{(x,y,z)\})P_Z(\{z\})}
    {P_{XZ}(\{(x,z)\})P_{YZ}(\{(y,z)\})}\]
  otherwise
  \begin{equation}\frac{P_{XYZ}(r)P_Z(r)}{P_{XZ}(r)P_{YZ}(r)}\rightarrow
    \frac{dP_{XY|Z}}{d(P_{X|Z}\times P_{Y|Z})}
  \end{equation}
  (converges pointwise) as $r\rightarrow0$ and
  \begin{equation}\frac{P_{XYZ}(r)P_Z(r)}{P_{XZ}(r)P_{YZ}(r)}\leq C
  \end{equation}
  almost everywhere $[P_{X|Z}\times P_{Y|Z}]$.
\end{lem}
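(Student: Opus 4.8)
The plan is to realize $f$ as a Radon-Nikodym derivative with respect to a single ``conditional product'' measure and then invoke Lemma~\ref{rnDerivLim}. Define $\mu$ on $\cX\times\cY\times\cZ$ by $\mu(A)=\int_{\cZ}\paren{P_{X|Z=z}\times P_{Y|Z=z}}(A_z)\,dP_Z(z)$, where $A_z$ is the $z$-section of $A$; this is a well-defined measure since all conditionals are regular. By Tonelli's theorem and the defining property of $f=\frac{dP_{XY|Z}}{d\paren{P_{X|Z}\times P_{Y|Z}}}$ (well-defined by Theorem~\ref{rnwd}), $\int_A f\,d\mu=\int_{\cZ}P_{XY|Z=z}(A_z)\,dP_Z(z)=P_{XYZ}(A)$ for every measurable $A$, so $P_{XYZ}\ll\mu$ with $\frac{dP_{XYZ}}{d\mu}=f$. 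I will also use the disintegrations $P_{XZ}=P_{X|Z}\times P_Z$ and $P_{YZ}=P_{Y|Z}\times P_Z$, so that for the $\ell_\infty$-balls $B_X=B(x,r)$, $B_Y=B(y,r)$, $B_Z=B(z,r)$,
\begin{align*}
  \mu(B_X\times B_Y\times B_Z) &= \int_{B_Z}P_{X|Z=z'}(B_X)\,P_{Y|Z=z'}(B_Y)\,dP_Z(z'),\\
  P_{XZ}(B_X\times B_Z) &= \int_{B_Z}P_{X|Z=z'}(B_X)\,dP_Z(z'),\\
  P_{YZ}(B_Y\times B_Z) &= \int_{B_Z}P_{Y|Z=z'}(B_Y)\,dP_Z(z').
\end{align*}

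\emph{Discrete case.} If $P_{XYZ}(\{(x,y,z)\})>0$, then $z$ is an atom of $P_Z$ and $(x,y)$ is an atom of both $P_{XY|Z=z}$ and $P_{X|Z=z}\times P_{Y|Z=z}$. The atom branch of Lemma~\ref{rnDerivLim}, applied on the $z$-slice with reference $P_{X|Z=z}\times P_{Y|Z=z}$ and with $P_{XY|Z=z}\ll P_{X|Z=z}\times P_{Y|Z=z}$ supplied by Theorem~\ref{rnwd}, gives $f(x,y,z)=P_{XY|Z=z}(\{(x,y)\})\big/\brac{P_{X|Z=z}(\{x\})\,P_{Y|Z=z}(\{y\})}$; substituting $P_{XY|Z=z}(\{(x,y)\})=P_{XYZ}(\{(x,y,z)\})/P_Z(\{z\})$, $P_{X|Z=z}(\{x\})=P_{XZ}(\{(x,z)\})/P_Z(\{z\})$, and $P_{Y|Z=z}(\{y\})=P_{YZ}(\{(y,z)\})/P_Z(\{z\})$ yields the claimed identity.

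\emph{Continuous case and the bound.} For $w=(x,y,z)$ in the support with $P_{XYZ}(\{w\})=0$, Lemma~\ref{rnDerivLim} applied to $P_{XYZ}\ll\mu$ (its regularity hypotheses --- nonsingularity and a nowhere-dense atom set --- being routine to check for $\mu$ under the standing assumptions) gives $f(w)=\lim_{r\to0}P_{XYZ}(B(w,r))/\mu(B(w,r))$. Since $B(w,r)=B_X\times B_Y\times B_Z$, it therefore suffices to show
\[
  \frac{\mu(B_X\times B_Y\times B_Z)\,P_Z(B_Z)}{P_{XZ}(B_X\times B_Z)\,P_{YZ}(B_Y\times B_Z)}\;\longrightarrow\;1 \qquad\text{as }r\to0
\]
for $[P_{X|Z}\times P_{Y|Z}]$-almost every $(x,y,z)$. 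Writing $u_r(z')=P_{X|Z=z'}(B_X)$ and $v_r(z')=P_{Y|Z=z'}(B_Y)$, this says precisely that the $P_Z$-average over the shrinking ball $B_Z$ of the product $u_rv_r$ is asymptotic to the product of the $P_Z$-averages of $u_r$ and of $v_r$; i.e.\ $u_r$ and $v_r$ decorrelate over $B_Z$. I would obtain this from the pointwise fact that $u_r(z')/u_r(z)\to1$ and $v_r(z')/v_r(z)\to1$ uniformly over $z'\in B_Z$ for $[P_{X|Z}\times P_{Y|Z}]$-a.e.\ $(x,y,z)$, which is the a.e.\ differentiability of the disintegrations $z\mapsto P_{X|Z=z}$ and $z\mapsto P_{Y|Z=z}$ (equivalently, Lemma~\ref{rnDerivLim} in $\cX\times\cZ$ for $P_{XZ}\ll P_{X|Z}\times P_Z$ and in $\cY\times\cZ$ for $P_{YZ}\ll P_{Y|Z}\times P_Z$): conditioning $Z$ on $B_Z$ rather than on $\{z\}$ does not affect the mass of a small $X$- or $Y$-ball in the limit. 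For the bound, since $f\le C$ $\mu$-a.e.\ we have $P_{XYZ}(B(w,r))/\mu(B(w,r))=\mu(B(w,r))^{-1}\int_{B(w,r)}f\,d\mu\le C$ for every $r$, so with the displayed equivalence the ratio in the lemma is $\le C(1+o(1))$, and passing to the limit gives the essential bound $\le C$.

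The main obstacle is the decorrelation step: quantifying how fast $u_r$ and $v_r$ flatten over the vanishing $Z$-ball and controlling their joint average. A crude domination argument cannot work: in the example where $X$ and $Y$ are conditionally independent given $Z$ and uniformly distributed on intervals depending on $Z$, the displayed ratio converges to $2/3$ rather than to $f=1$ at points where $x$ and $y$ sit at opposite endpoints of their conditional supports, so the measure-zero exceptional set hidden in ``$[P_{X|Z}\times P_{Y|Z}]$-a.e.'' is genuinely unavoidable. The cleanest route is to reduce the entire step to the scalar statement that $P_{X|Z=z'}(B(x,r))/P_{X|Z=z}(B(x,r))\to1$ as $r\to0$, uniformly in $z'\in B(z,r)$, for $P_{XZ}$-a.e.\ $(x,z)$, and then multiply.
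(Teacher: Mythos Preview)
Your discrete case is fine and matches the paper. The continuous case, however, takes a genuinely harder road than necessary, and the ``decorrelation step'' you flag as the main obstacle is exactly where the argument is incomplete. You have correctly identified that $\overline{u_rv_r}/(\bar u_r\,\bar v_r)\to 1$ can fail on a nontrivial set (your $2/3$ example), and you assert the exceptional set is null without proving it. The sketched reduction to ``$P_{X|Z=z'}(B(x,r))/P_{X|Z=z}(B(x,r))\to 1$ uniformly in $z'\in B(z,r)$'' is a strong regularity statement about disintegrations that does not follow from anything in the paper and would itself require a Lebesgue-differentiation-type argument for families of measures. So as written this is a gap, not a detail.

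The paper sidesteps the decorrelation problem entirely by a different identification of $f$. Rather than writing $f=dP_{XYZ}/d\mu$ for your mixture measure $\mu$ on $\cX\times\cY\times\cZ$, the paper uses the dominating product measures of Lemma~\ref{domMeasure} to manipulate Radon--Nikodym derivatives and obtains
\[
  f \;=\; \frac{dP_{XY|Z}}{d(P_{X|Z}\times P_{Y|Z})} \;=\; \frac{d(P_{XYZ}\times P_Z)}{d(P_{XZ}\times P_{YZ})},
\]
where both $P_{XYZ}\times P_Z$ and $P_{XZ}\times P_{YZ}$ live on the \emph{enlarged} space $\cX\times\cY\times\cZ\times\cZ$ (with two independent copies of the $Z$ coordinate). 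Because the $\ell_\infty$ ball in this product space factors as a product of balls, Lemma~\ref{rnDerivLim} applied once to this single Radon--Nikodym derivative gives
\[
  f(x,y,z)\;=\;\lim_{r\to0}\frac{(P_{XYZ}\times P_Z)\bigl(B((x,y,z,z),r)\bigr)}{(P_{XZ}\times P_{YZ})\bigl(B((x,y,z,z),r)\bigr)}
  \;=\;\lim_{r\to0}\frac{P_{XYZ}(r)\,P_Z(r)}{P_{XZ}(r)\,P_{YZ}(r)}
\]
directly, with no correlation of averages to control. The bound $\le C$ then follows in one line from $f\le C$ and $\nu(A)=\int_A f\,d\mu\le C\mu(A)$. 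The trick you are missing is to \emph{double} the $Z$ coordinate so that the four probabilities in the target ratio appear as genuine product-ball masses, rather than trying to recover that product structure from a single $Z$ average.
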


\begin{proof}
  From lemma \ref{domMeasure}, for a random variables, $U$ and
  $V$ define $\mu_U\times\mu_V = \mu_{UV}$.
  Based on definitions from \cite[\S 7.1 and \S 7.2]{gray2011entropy}, define
  $p_{UV} = \frac{dP_{UV}}{d\mu_{UV}}$, $p_V = \frac{dP_V}{d\mu_V}$,
  and $p_{U|V} = \frac{p_{UV}}{p_{V}}$.
  Note that $\mu_{UV}$ is not a probability measure, but for brevity, we
  define $\mu_{U|V}(A|v) = \mu_U(A)$ for $A$ in the support of $U$ and
  $v$ in the support of $V$ so that $P_{U|V}$ and $\mu_{U|V}$ have the
  same support.

  From lemma \ref{domMeasure}, $P_{XY|Z}\ll\mu_{XY|Z}$.
  Because $P_{X|Z}$ has the same support as $\mu_{X|Z}$, $\mu_{X|Z}\ll
  P_{X|Z}$; similarly,
  $P_{Y|Z}$ has the same support as $\mu_{Y|Z}$, so
  $\mu_{Y|Z}\ll\P_{Y|Z}$.
  Using a proof similar to that of lemma \ref{domMeasure},
  $\mu_{X|Z}\times\mu_{Y|Z}\ll P_{X|Z}\times P_{Y|Z}$.
  But, $\mu_{XY|Z} = \mu_{X|Z}\times\mu_{Y|Z}$ because it is a product
  measure.
  
  Using properties of RN derivatives
  so that 
  \begin{align*}
    &\frac{dP_{XY|Z}}{d(P_{X|Z}\times P_{Y|Z})}\\
    &= \frac{dP_{XY|Z}}{d(\mu_{X|Z}\times \mu_{Y|Z})}
      \frac{d(\mu_{X|Z}\times \mu_{Y|Z})}{d(P_{X|Z}\times P_{Y|Z})}\\
    &= \frac{dP_{XY|Z}}{d\mu_{XY|Z}}
      \brac{\frac{d(P_{X|Z}\times P_{Y|Z})}
      {d(\mu_{X|Z}\times\mu_{Y|Z})}}^{-1}\\
    &= \frac{dP_{XY|Z}/d\mu_{XY|Z}}
      {d(P_{X|Z}\times P_{Y|Z})/d(\mu_{X|Z}\times \mu_{Y|Z})}\\
    &= \frac{dP_{XY|Z}/d\mu_{XY|Z}}
      {(dP_{X|Z}/d\mu_{X|Z})(dP_{Y|Z}/d\mu_{Y|Z})}\\
    &= \frac{\frac{dP_{XYZ}}{d\mu_{XYZ}}/\frac{dP_Z}{d\mu_Z}}
      {\paren{\frac{dP_{XZ}}{d\mu_{XZ}}/\frac{dP_Z}{d\mu_Z}}
      \paren{\frac{dP_{YZ}}{d\mu_{YZ}}/\frac{dP_Z}{d\mu_Z}}}\\
    &= \frac{\frac{dP_{XYZ}}{d\mu_{XYZ}}\frac{dP_Z}{d\mu_Z}}
      {\frac{dP_{XZ}}{d\mu_{XZ}}\frac{dP_{YZ}}{d\mu_{YZ}}}\\
    &= \frac{d(P_{XYZ}\times P_Z)/d(\mu_{XYZ}\times \mu_Z)}
      {d(P_{XZ}\times P_{YZ})/d(\mu_{XZ}\times\mu_{YZ})}\\
    &= \frac{d(P_{XYZ}\times P_Z)}{d(P_{XZ}\times P_{YZ})}.
  \end{align*}
  Applying lemma~\ref{rnDerivLim} completes the first claim.

  Second, note that $g\equiv\frac{d\nu}{d\mu}\leq C$
  implies that for any set $A$ such that
  $\mu(A)>0, \frac{\nu(A)}{\mu(A)} \leq C$.
  To see this, $\nu(A) = \int_A gd\mu \leq \int_A Cd\mu = C\mu(A)$.
  So, the second claim holds as well.
\end{proof}

\begin{lem}\label{contBound}
  Assume $W_{n,r}-k\sim \text{Binomial}\paren{n-k-1,
    \frac{q(r)-p(r)}{1-p(r)}}$ where
  $p(r), q(r)$ are probabilities, and for all $r$,
  $p(r) \leq q(r)$.
  Then
  \begin{equation}\begin{split}
      &\abs{\int_0^\infty\E\brac{\psi(W_{n,r})} -
        \log(nq(r))dF_\rho(r)}
      < \frac{3}{k-1}
    \end{split}\end{equation}
  and
  \begin{equation}\begin{split}
      &\abs{\int_0^\infty\E\brac{\log(W_{n,r})} -
        \log(nq(r))dF_\rho(r)}
      < \frac{2}{k-1}.
    \end{split}\end{equation}
\end{lem}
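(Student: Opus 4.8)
The plan is to condition on the $k$NN radius $\rho$, apply Proposition~\ref{absExpLog} to the conditional law of $W_{n,r}$, and then turn the remaining integral in $r$ into an integral over the ball-probability $p(r)$ via Lemma~\ref{distDeriv}, whose change-of-variables Jacobian is a $\text{Beta}(k,n-k)$ density. First I would reduce the digamma statement to the logarithm statement: since $W_{n,r}-k$ is $\text{Binomial}(n-k-1,\cdot)$ we always have $W_{n,r}\ge k$, so $\abs{\psi(W_{n,r})-\log W_{n,r}}<1/W_{n,r}\le 1/k$ pointwise, and this survives taking the expectation and integrating against $dF_\rho$; hence the $\psi$-bound $\frac{3}{k-1}$ follows from the $\log$-bound $\frac{2}{k-1}$ by adding $1/k<1/(k-1)$ (note $k\ge 2$ is needed for either bound to be meaningful, and below $n$ is taken large enough for Proposition~\ref{absExpLog} to apply).

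For the $\log$-bound, write $W_{n,r}=k+V_r$ with $V_r\sim\text{Binomial}(n-k-1,\theta_r)$, $\theta_r=\frac{q(r)-p(r)}{1-p(r)}$, and set $\mu_r=\E[W_{n,r}]=k+(n-k-1)\theta_r\ge k$. Proposition~\ref{absExpLog}, applied with binomial size $n-k-1$, success probability $\theta_r$, and shift $k$, gives $\abs{\E[\log W_{n,r}]-\log\mu_r}\le 1/\mu_r\le 1/k$, so
\[
\abs{\int_0^\infty\paren{\E[\log W_{n,r}]-\log(nq(r))}dF_\rho(r)}\le \frac1k+\abs{\int_0^\infty\log\frac{\mu_r}{nq(r)}\,dF_\rho(r)}.
\]
By Lemma~\ref{distDeriv} the substitution $u=p(r)$ converts the last integral into $\int_0^1\log\frac{k+(n-k-1)\frac{q(u)-u}{1-u}}{nq(u)}\,\beta_{k,n-k}(u)\,du$, where $\beta_{k,n-k}$ is the $\text{Beta}(k,n-k)$ density and $q(u)\ge u$ is nondecreasing. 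Using the identity $\frac{q-u}{1-u}=q-\frac{u(1-q)}{1-u}$ I would write the argument of the logarithm as $1+A(u)-B(u)-C(u)$ with $A(u)=\frac{k+u}{nq(u)(1-u)}$, $B(u)=\frac{k+1}{n(1-u)}$, $C(u)=\frac{u(1-q(u))}{q(u)(1-u)}$, bound $\abs{\log(1+x)}$ by $\abs{x}$ together with a second-order correction, and integrate term by term, using $q(u)\ge u$ to dominate the $1/q(u)$ factors by $1/u$ and then the moments $\E[U]=k/n$, $\E[U^{-1}]=\frac{n-1}{k-1}$, $\E[(1-U)^{-1}]=\frac{n-1}{n-k-1}$, and the related second-order moments of $U\sim\text{Beta}(k,n-k)$. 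The crucial point is that the two $O(1)$-looking contributions $\E[A]$ and $\E[C]$ nearly cancel, because $\frac{k+u}{n}-u(1-q)=\frac1n\paren{k-u\paren{n(1-q)-1}}$, leaving only mass of order $1/k$ (resp.\ $1/(k-1)$), which together with the $1/k$ from the previous display stays below $2/(k-1)$.

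The hard part is exactly this last estimate. Since $q(u)$ is essentially unconstrained — only $q(u)\ge u$ and monotonicity are available — the cancellation between the $\E[A]$ and $\E[C]$ contributions has to be arranged so that it is uniform over every admissible $q$, and the error from linearising $\log(1+x)$ must be controlled near $u\approx k/n$, where $x$ can be of order one and where the Beta density concentrates its mass. Keeping the bookkeeping so that every surviving term carries a factor $1/k$ or $1/(k-1)$, and nothing larger, is the delicate step; the remainder is routine manipulation of binomial and Beta moments.
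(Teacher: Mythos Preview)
Your first two moves coincide with the paper's: reduce the $\psi$ statement to the $\log$ statement via $|\psi(w)-\log w|\le 1/w\le 1/k$, and then invoke Proposition~\ref{absExpLog} to get $\bigl|\E[\log W_{n,r}]-\log\mu_r\bigr|\le 1/\mu_r\le 1/k$. The divergence is entirely in how you treat the remaining piece $\int\log\bigl(\mu_r/(nq)\bigr)\,dF_\rho$, and that is where your proposal has an unresolved gap.

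The paper avoids your ``hard part'' altogether. Rather than splitting $\mu_r/(nq)-1$ into $A-B-C$ and integrating each piece against the Beta density, it applies $\log w\le w-1$ once and then observes that, with $r=\dfrac{p(1-q)}{q(1-p)}\in[0,1]$ (this is exactly your $C(u)$, and $r\le 1$ uses only $p\le q$),
\[
\frac{\mu_r}{nq}-1=\Bigl(\frac{k}{np}-\frac{n-1}{n}\Bigr)r-\frac{1}{n},
\]
so that the expression is bounded pointwise by $\dfrac{k}{np}-1$ with $q$ eliminated entirely. The only surviving integral is $\int p(r)^{-1}\,dF_\rho(r)$, which Lemma~\ref{distDeriv} turns into the Beta moment $\int_0^1 u^{-1}\beta_{k,n-k}(u)\,du=\dfrac{n-1}{k-1}$; combined with the $-1$ from linearization this leaves $\dfrac{k}{n}\cdot\dfrac{n-1}{k-1}-1=\dfrac{n-k}{n(k-1)}\le\dfrac{1}{k-1}$. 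In other words, the uniform-in-$q$ cancellation you were trying to arrange \emph{after} integration already takes place \emph{pointwise}, before integrating, once you recognise that your $A-B-C$ is an affine function of $C$ with slope $\dfrac{k}{nu}-\dfrac{n-1}{n}$ and use $0\le C\le 1$. You were one algebraic regrouping away; the term-by-term plan with second-order $\log$ corrections is unnecessary.
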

\begin{proof}
  We suppress the arguments/subscripts, $r$ and $n$ for brevity
  through out this proof. 
  Using the triangle inequality, the
  fact that $\abs{\psi(w) - \log(w)}\leq\frac{1}{w}$,
  \begin{align*}
    &\abs{\E\brac{\psi(W_{n,r})} - \log(nq(r))}\\
    &\equiv\abs{\E\brac{\psi(W)} - \log(nq)}\\
    &\leq \abs{\E\brac{\psi(W)} - \E\brac{\log(W_n)}}\\
    &\quad + \abs{\E\brac{\log(W)} - \log\brac{(n-k-1)
      \paren{\frac{q-p}{1-p}} + k}}\\
    &\quad + \abs{\log\brac{(n-k-1)\paren{\frac{q-p}{1-p}} + k} - \log(nq)}\\
    &\leq \E\brac{\frac{1}{W}}
      + \abs{\E\brac{\log\paren{\frac{W}{(n-k-1)
      \paren{\frac{q-p}{1-p}} + k}}}}\\
    &\quad + \log\paren{\frac{(n-k-1)
      \paren{\frac{q-p}{1-p}} + k}{nq}}\\
    &\leq \frac{1}{k} + \frac{1}{(n-k-1)\paren{\frac{q-p}{1-p}} + k}
      + \frac{k}{np}-1\\
    &\leq \frac{2}{k} + \frac{k}{np}-1.
  \end{align*}
  The penultimate step uses $W\geq k$ and proposition~\ref{absExpLog}
  for the first two terms.
  We show the third term here, again using $\log(w)\leq w-1$ for
  $w\geq0$ and $\paren{\frac{p(1-q)}{q(1-p)}}\leq 1$: 

  \begin{align*}
    &\log\paren{\frac{(n-k-1)\paren{\frac{q-p}{1-p}} + k}{nq}}\\
    &\leq \frac{(n-k-1)\paren{\frac{q-p}{1-p}} + k}{nq}-1\\
    &= \frac{k\paren{\frac{1-q}{1-p}} +
      (n-1)\paren{\frac{q-p}{1-p}}}{nq} -1\\
    &= \frac{k}{np}\paren{\frac{p(1-q)}{q(1-p)}}
      + \frac{n-1}{n}\paren{\frac{q-p}{q(1-p)}}-1\\
    &= \frac{k}{np}\paren{\frac{p(1-q)}{q(1-p)}}
      + \frac{n-1}{n}\paren{1- \frac{p(1-q)}{q(1-p)}}-1\\
    &= \paren{\frac{k}{np} - \frac{n-1}{n}}
      \paren{\frac{p(1-q)}{q(1-p)}} + \frac{1}{n}\\
    &\leq \frac{k}{np} - \frac{n-1}{n} + \frac{1}{n} = \frac{k}{np}-1.
  \end{align*}

  Putting this all together,
  \begin{align*}
    &\abs{\int_0^\infty\E\brac{\psi(W)}  - \log(nq)dF_\rho}\\
    &\leq \int_0^\infty\abs{\E\brac{\psi(W)} - \log(nq)}dF_\rho\\
    &\leq \int_0^\infty\paren{\frac{2}{k} + \frac{k}{np}-1}dF_\rho\\
    &= \frac{2}{k} + \frac{k}{n}\int_0^\infty\frac{1}{p}dF_\rho-1\\
    &= \frac{2}{k} + \frac{k}{n}\paren{\frac{n-1}{k-1}}-1\\
    &\leq \frac{2}{k} + \frac{k}{k-1}-1 \leq \frac{3}{k-1}
  \end{align*}

  We complete the integration step using lemma~\ref{distDeriv} and two
  beta function identities, 
  \begin{align*}
    &\int_0^\infty\frac{1}{p}dF_\rho\\
    &= \int_0^1\frac{1}{p}\frac{(n-1)!}{(k-1)!(n-k-1)!}
      p^{k-1}(1-p)^{n-k-1}dp\\
    &= \frac{(n-1)!}{(k-1)!(n-k-1)!}\int_0^1
      p^{k-2}(1-p)^{n-k-1}dp\\
    &= \frac{(n-1)!}{(k-1)!(n-k-1)!}\frac{(k-2)!(n-k-1)!}{(n-2)!}\\
    &= \frac{n-1}{k-1}.\\
  \end{align*}
  The second claim follows using a close but simpler argument.
\end{proof}

\ifCLASSOPTIONcompsoc
  \section*{Acknowledgments}
\else
  \section*{Acknowledgment}
\fi

The authors would like to thank the Ford Foundation Dissertation
Fellowship for funding this work.

\bibliographystyle{IEEEtran}
\bibliography{IEEEabrv,cmi.bib,locusts.bib}





\end{document}